\renewcommand\subsection{\@startsection{subsection}{2}%
\normalparindent{.5\linespacing\@plus.7\linespacing}{-.5em}
{\normalfont\bfseries}}
\renewcommand\subsubsection{\@startsection{subsubsection}{3}%
\normalparindent{.5\linespacing\@plus.7\linespacing}{-.5em}
{\normalfont\bfseries}}
\def\@tocline#1#2#3#4#5#6#7{\relax
  \ifnum #1>\c@tocdepth 
  \else
    \par \addpenalty\@secpenalty\addvspace{#2}%
    \begingroup \hyphenpenalty\@M
    \@ifempty{#4}{%
      \@tempdima\csname r@tocindent\number#1\endcsname\relax
    }{%
      \@tempdima#4\relax
    }%
    \parindent\z@ \leftskip#3\relax \advance\leftskip\@tempdima\relax
    \rightskip\@pnumwidth plus4em \parfillskip-\@pnumwidth
    #5\leavevmode\hskip-\@tempdima
      \ifcase #1
       \or\or \hskip 1em \or \hskip 2em \else \hskip 3em \fi%
      #6\nobreak\relax
    \dotfill\hbox to\@pnumwidth{\@tocpagenum{#7}}\par
    \nobreak
    \endgroup
  \fi}
\newtheorem{theorem}{Theorem}
\newtheorem{proposition}{Proposition}[section]
\newtheorem{lemma}[proposition]{Lemma}
\theoremstyle{definition}
\newtheorem{definition}[proposition]{Definition}
\newtheorem{remark}[proposition]{Remark}
\numberwithin{equation}{section}
\newcommand\eps{\varepsilon}
\newcommand\e{{\rm e}}
\newcommand\dd{{\rm d}}
\newcommand\ddt{{\frac{\dd}{\dd t}}}
\newcommand\spt{{\rm spt}}
\def\l {\langle}
\def\r {\rangle}
\newcommand{\norm}[1]{\left\lVert #1 \right\rVert}
\newcommand\TT {{\mathbb T}}
\newcommand\RR {{\mathbb R}}
\def\SS {{\mathbb S}}
\newcommand\cL{{\mathcal L}}
\newcommand\cR{{\mathcal R}}
\newcommand\cS{{\mathcal S}}
\newcommand{\haus}{\mathscr{H}}
\newcommand{\sfp}{\mathsf{p}}
\newcommand{\sfc}{\mathsf{c}}
\begin{document}

\title[Enhanced dissipation for two-dimensional Hamiltonian flows]{Enhanced dissipation for two-dimensional Hamiltonian flows}

\author[E. Bru\`e]{Elia Bru\`e}
\address{School of Mathematics, Institute for Advanced Study, 1 Einstein Dr., Princeton NJ 05840, USA}
\email{elia.brue@math.ias.edu}

\author[M. Coti Zelati]{Michele Coti Zelati}
\address{Department of Mathematics, Imperial College London, London, SW7 2AZ, UK}
\email{m.coti-zelati@imperial.ac.uk}

\author[E. Marconi]{Elio Marconi}
\address{Dipartimento di Matematica `Tullio Levi Civita', Universit\`a di Padova, via Trieste 63, 35121 Padova (PD), Italy}
\email{elio.marconi@unipd.it}

\subjclass[2020]{35Q35, 35Q49, 76F25}

\keywords{Mixing, enhanced dissipation, Hamiltonian flows, action-angle coordinates, cellular flow.}

\begin{abstract}
Let $H\in C^1\cap W^{2,p}$ be an autonomous, non-constant Hamiltonian on a compact $2$-dimensional manifold, generating an
incompressible velocity field $b=\nabla^\perp H$. We give sharp upper bounds on the enhanced dissipation rate
of $b$ in terms of the properties of the period $T(h)$ of the close orbits $\{H=h\}$. Specifically, if $0<\nu\ll 1$ is the
diffusion coefficient, the enhanced dissipation rate can be at most $O(\nu^{1/3})$ in general, the bound
improves when $H$ has isolated, non-degenerate elliptic point. Our result provides the better bound $O(\nu^{1/2})$
for the standard cellular flow given by $H_\mathsf{c}(x)=\sin x_1 \sin x_2$, for which we can also prove a new upper bound
on its mixing mixing rate and a lower bound on its enhanced dissipation rate. 
The proofs are based on the use of action-angle coordinates and on the existence of a good invariant domain for
the regular Lagrangian flow generated by $b$.
\end{abstract}

\maketitle

\setcounter{tocdepth}{1}
\tableofcontents

\section{Advection-diffusion equations}
Let $(M,g)$ be a compact $2$-dimensional manifold, possibly with boundary, and consider 
an autonomous, non-constant Hamiltonian $H$ which generates a velocity field $b:=\nabla^\perp H$,  tangent to the boundary whenever $\partial M \neq \emptyset$.
We are interested in the long-time dynamics of the scalar function $\rho^\nu:[0,\infty)\times M\to\RR$ subject to the advection-diffusion equation
\begin{equation}\label{e:adv-diff}
	\begin{cases}
		\partial_t \rho^\nu + b\cdot \nabla \rho^\nu = \nu \Delta \rho^\nu, \\
		\rho^\nu(0,\cdot) = \rho_0.
	\end{cases}\tag{A-D}
\end{equation}
Here, $\rho_0:M\to\RR$ is an assigned mean-free initial datum and $\nu>0$ is the diffusivity parameter.  When $\partial M \neq \emptyset$,
we prescribe homogeneous Neumann condition $\partial_n \rho^\nu =0$, where $n$ is the outward normal derivative to the boundary.

In the non-diffusive case, i.e. when $\nu=0$, \eqref{e:adv-diff} reduces to the standard transport equation
\begin{equation}\label{e:transport}
	\begin{cases}
		\partial_t \rho + b\cdot \nabla \rho = 0, \\
		\rho(0,\cdot) = \rho_0.
	\end{cases}\tag{T}
\end{equation}
The goal of this article is to study the mixing and diffusive properties of  \eqref{e:adv-diff} and \eqref{e:transport} in terms of sharp decay rates for $\rho^\nu$ and $\rho$,
under general assumptions on the Hamiltonian $H$.

\subsection{Mixing and enhanced dissipation}
Enhanced dissipation typically refers to the accelerated decay of solutions to \eqref{e:adv-diff} due to the interaction of transport and diffusion.
We are interested in putting on sound mathematical grounds the following statement from \cite{RhinesYoung83}:

\begin{quote}
The homogenization of a passive tracer  in a flow with closed mean streamlines occurs
in two stages: first, a rapid phase dominated by shear-augmented diffusion over a
time $\sim \nu^{-1/3}$, in which initial values of the tracer are replaced by
their (generalized) average about a streamline; second, a slow phase requiring the full
diffusion time $\sim \nu^{-1}$.
\end{quote}
The above statement can be interpreted in terms of the behavior of the $L^2$ norm of the solution $\rho^\nu$ to  \eqref{e:adv-diff}. 
In view of the energy balance
\begin{align}
\ddt \|\rho^\nu\|^2 + 2\nu \|\nabla\rho^\nu\|^2=0, \qquad \forall t\geq 0,
\end{align}
all mean-free solutions decay exponentially to zero as $\e^{-c_\sfp\nu t}$, where $c_\sfp>0$ is related to the Poincar\'e constant. Hence, the
natural diffusive time-scale $O(\nu^{-1})$ appears trivially, and no role is played by the velocity field $b$. Now, the above-mentioned \emph{slow phase}
refers to such diffusive behavior for the average of  $\rho^\nu$ on the streamlines of the Hamiltonian $H$: indeed, if $\rho^\nu$ were constant on the
streamlines, it would then follow that $b\cdot \nabla \rho^\nu=0$, implying that the diffusive behavior is the only possible one. On the contrary, 
the rest of the solution is conjectured to undergo the \emph{rapid phase}, in which decay happens on a much faster time-scale $O(\nu^{-1/3})$.
These considerations are  at the heart of the concept of \emph{enhanced dissipation}, formalized in the definition below.
 
\begin{definition}\label{def:diff-enhancing}
Let $\nu_0\in(0,1)$ and $\lambda:(0,\nu_0)\to (0,1)$ be a continuous increasing function such that
\begin{align}
\lim_{\nu\to 0}\frac{\nu}{\lambda(\nu)}=0.
\end{align} 
The velocity field $b$ is \emph{dissipation enhancing} at rate $\lambda(\nu)$ if
there exists $A\geq 1$ only depending on $b$ such that if $\nu\in(0, \nu_0)$ then for every $\rho_0\in L^2$ with zero streamlines-average we have the enhanced dissipation estimate
\begin{align}\label{eq:decay}
\|\rho^\nu(t)\|_{L^2}\leq A \e^{- \lambda(\nu)t}\|\rho_0\|_{L^2}, 
\end{align}
for every $t\geq 0$.
 \end{definition}
While the above concept has been more or less informally studied in the physics literature since the late nineteenth century, 
it has received much attention by the mathematical community only recently, starting with the seminal article \cite{CKRZ08}. In this work, enhanced dissipation 
has been proven to be equivalent to the non-existence of non-trivial $H^1$-eigenfunctions of the transport operator $b\cdot \nabla$: in particular, 
functions that are constant on streamlines are eigenfunctions and hence have to be excluded when studying enhanced dissipation in the two-dimensional, autonomous setting.
From a quantitive point of view, the picture is now quite clear in the context of shear flows \cites{BCZ17,BW13,CCZW21,Wei21,ABN22,CZG21,CZD21} and radial flows \cites{CZD20,CZD21}. For more general velocity fields, there are only some
results linking mixing rates and enhanced dissipation time-scales \cites{CZDE20,FI19}, and others that study the interplay between regularity and dissipation \cites{BN21}. We also mention 
the interesting work \cite{Vukadinovic21}, which deals with enhanced dissipation for an averaged equation stemming from general hamiltonians. However, 
a precise quantitative picture is still missing.

The goal of this article is to analyze enhanced dissipation in the case of velocity fields originating from general (regular) Hamiltonians. 
According to Definition \ref{def:diff-enhancing}, the case $\lambda(\nu)=c_\sfp\nu^{1/3}$ is precisely the one described in \cite{RhinesYoung83}. 
One of our main results is that the exponent $1/3$ is the best possible in the autonomous setting.

\begin{theorem}\label{thm:main}
 Let $p\geq 2$ and $H\in C^1\cap W^{2,p}(M)$ be such that $b:=\nabla^\perp H$ is dissipation enhancing with rate $\lambda(\nu)$. Then
	\begin{equation}\label{eq:upperrnu}
		\lambda(\nu) \le C_0 \nu^{1/3}.
	\end{equation}
for some positive constant $C_0=C_0(H)$.
\end{theorem}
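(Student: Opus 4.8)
The plan is to \emph{refute} any rate faster than $\nu^{1/3}$ by producing, for each small $\nu$, an admissible initial datum whose $L^2$ norm has barely decreased by the time $t_\nu \sim \nu^{-1/3}$. Concretely, suppose we can exhibit $\rho_0\in L^2(M)$ with zero streamlines-average, $\norm{\rho_0}_{L^2}=1$, together with a time $t_\nu$ such that $\norm{\rho^\nu(t_\nu)}_{L^2}\ge \tfrac12$. Then the enhanced dissipation estimate \eqref{eq:decay} forces $\tfrac12\le A\e^{-\lambda(\nu)t_\nu}$, i.e. $\lambda(\nu)\le t_\nu^{-1}\log(2A)$; choosing $t_\nu=c_0\nu^{-1/3}$ yields \eqref{eq:upperrnu} with $C_0=c_0^{-1}\log(2A)$, which depends only on $H$ (through $c_0$ and through $A$). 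So everything reduces to constructing such a $\rho_0$ and certifying the lower bound on the time-scale $t_\nu\sim\nu^{-1/3}$.

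To build $\rho_0$, I localize on a region where $b$ acts as a (possibly very mild) differential rotation. Since $H$ is non-constant, there is an annular region $U$, compactly contained in the interior of $M$, foliated by periodic orbits of $b$ (the ``good invariant domain''); pass to the action-angle coordinates $(\theta,I)\in \TT\times J$ on $U$, in which Lebesgue measure is $\dd\theta\,\dd I$, the field reads $b=\omega(I)\de_\theta$ with $\omega=2\pi/T$, and the Dirichlet form of $\Delta$ equals $\int g^{ij}\de_i(\cdot)\de_j(\cdot)\,\dd\theta\,\dd I$ for a measurable, uniformly positive matrix $g^{ij}=g^{ij}(\theta,I)$ controlled in terms of $H$. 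Pick $\chi\in C_c^\infty(J)$ and set $\rho_0:=\cos\theta\;\chi(I)$ on $U$ and $\rho_0:=0$ on $M\setminus U$, with $\chi$ normalized so that $\norm{\rho_0}_{L^2(M)}=1$. Then $\rho_0$ has zero average on every streamline, since $\int_\TT\cos\theta\,\dd\theta=0$ on $U$ and $\rho_0\equiv0$ elsewhere.

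Now compare $\rho^\nu$ with the inviscid transport $\rho$ solving \eqref{e:transport} with the same datum. The difference $w:=\rho^\nu-\rho$ solves $\de_t w+b\cdot\nabla w=\nu\Delta w+\nu\Delta\rho$ with $w(0)=0$; since $\spt\rho(s)=\Phi_{-s}(\spt\rho_0)\subset U$ stays in the interior of $M$ for all $s$, no boundary terms arise, and the energy identity together with $2\nu\abs{\jap{\nabla\rho,\nabla w}}\le 2\nu\norm{\nabla w}_{L^2}^2+\tfrac{\nu}{2}\norm{\nabla\rho}_{L^2}^2$ gives
\begin{equation}
\ddt\norm{w}_{L^2}^2\le \frac{\nu}{2}\,\norm{\nabla\rho(t)}_{L^2}^2,\qquad\text{hence}\qquad \norm{w(t)}_{L^2}^2\le \frac{\nu}{2}\int_0^t\norm{\nabla\rho(s)}_{L^2}^2\,\dd s.
\end{equation}
In the action-angle chart the inviscid solution is explicit, $\rho(s)(\theta,I)=\cos(\theta-\omega(I)s)\,\chi(I)$, so $\abs{\de_\theta\rho(s)}\le\abs{\chi(I)}$ and $\abs{\de_I\rho(s)}\le s\abs{\omega'(I)\chi(I)}+\abs{\chi'(I)}$ pointwise; pairing these with the coefficients $g^{ij}$ (note $g^{II}$ is bounded, while $g^{\theta\theta},g^{\theta I}$ are merely integrable, and $\omega'=-2\pi T'/T^2$ has the Lebesgue regularity inherited from $H\in W^{2,p}$) yields $\norm{\nabla\rho(s)}_{L^2(M)}\le C_H(1+s)$. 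Therefore $\norm{w(t)}_{L^2}^2\le C_H'\,\nu\,(1+t)^3$, so choosing $t_\nu:=c_0\nu^{-1/3}$ with $c_0=c_0(H)$ small enough (and $\nu$ small enough that $t_\nu\ge1$) gives $\norm{w(t_\nu)}_{L^2}\le\tfrac12$. Since inviscid transport is an $L^2$ isometry, $\norm{\rho(t_\nu)}_{L^2}=\norm{\rho_0}_{L^2}=1$, whence $\norm{\rho^\nu(t_\nu)}_{L^2}\ge\tfrac12$, and the reduction of the first paragraph concludes.

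The one genuinely delicate step is the bound $\norm{\nabla\rho(s)}_{L^2}\le C_H(1+s)$, i.e. that the inviscid flow filaments at most \emph{linearly} in $L^2$. For a generic incompressible Sobolev field this fails (flow Jacobians may grow exponentially), and this is precisely where the Hamiltonian structure intervenes: in action-angle coordinates the flow is the pure shear $(\theta,I)\mapsto(\theta+\omega(I)s,I)$, whose differential is linear in $s$ with sole coefficient $\omega'(I)$. Converting this pointwise fact into the asserted $L^2$ estimate requires the quantitative mapping properties of the action-angle change of variables --- in particular that the coefficients $g^{ij}$ of the Laplacian be integrable enough to pair with the bounded angular derivative $\de_\theta\rho(s)$ --- and it is exactly here that the hypothesis $p\ge2$ enters (roughly, it makes $g^{\theta\theta}=\abs{\nabla\theta}^2$ integrable, since $\nabla\theta$ inherits the regularity of $\nabla H\in W^{1,p}$). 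These properties are those established by the action-angle analysis of the paper; the localization, the energy estimate for $w$, and the final arithmetic are routine.
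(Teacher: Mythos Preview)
Your proposal is correct and follows essentially the same strategy as the paper: localize on a good invariant domain, establish linear-in-time growth of $\norm{\nabla\rho(s)}_{L^2}$ for the inviscid solution, deduce the vanishing-viscosity bound $\norm{\rho^\nu(t)-\rho(t)}_{L^2}^2\lesssim\nu(1+t)^3$, and conclude by evaluating the enhanced-dissipation inequality at $t\sim\nu^{-1/3}$.

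One correction to your final paragraph: at the stated regularity $H\in C^1\cap W^{2,p}$ the paper does \emph{not} justify the linear growth via action-angle coordinates---it explicitly remarks that these are unavailable below $C^2$---but instead bounds $\norm{\nabla X(t,\cdot)}_{W^{1,p}(\Omega)}\lesssim 1+t$ through regular Lagrangian flow estimates (Proposition~\ref{p:propagation}, drawing on \cite{Mar21}); the hypothesis $p\ge2$ then enters only to pass from the resulting $L^p$ bound on $\nabla\rho$ to $L^2$ by H\"older, not through integrability of the metric coefficients $g^{\theta\theta}$ as you suggest.
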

In the language of \cite{RhinesYoung83}, we prove that the rapid phase cannot happen before a time-scale $O(\nu^{-1/3})$. The exponent $1/3$
is not always the correct one, at least in the sense of Definition \ref{def:diff-enhancing}. It is achieved, for instance, by the shear flow $b=(x_2,0)$
on $\SS^1\times [0,1]$, namely the Couette flow \cites{Kelvin87}. However, it is well-known that the presence of critical points can slow down the dissipation \cite{BCZ17}.
This particular exponent is due to
dimensionality, regularity and the autonomous nature of our problem. Indeed:
\begin{itemize}
\item  contact Anosov flows on smooth odd dimensional connected compact Riemannian manifolds
 have an enhanced dissipation time-scale $O(|\ln\nu|^2)$, see \cite{CZDE20};

\item there exist H\"older continuous shear flows on the two-dimensional torus with an enhanced dissipation
time-scale $O(\nu^{-\gamma})$, for any $\gamma\in (0,1)$, see \cite{CCZW21,Wei21}

\item non-autonomous velocity fields generated as solutions to the stochastic Navier-Stokes equations  on the two-dimensional torus  have an enhanced dissipation
time-scale $O(|\ln\nu|)$, see \cite{BBPS21}.
\end{itemize}
The proof of Theorem \ref{thm:main}
is based on the following key observation: any Hamiltonian $H\in C^1\cap W^{2,p}(M)$ has an invariant domain $\Omega$ where the gradient of the flow map grows at most linearly in time. It follows that
\begin{equation}\label{z6}
	\| \rho^\nu (t) - \rho(t)\|_{L^2}^2 
	\lesssim 
	\nu t^3 \qquad \forall t\ge 1\, ,
\end{equation}
provided $\rho_0$ is concentrated in $\Omega$.
In view of the inviscid conservation $\| \rho(t)\|_{L^2} = \| \rho_0 \|_{L^2}$, estimate \eqref{z6} provides the desired upper bound on $\lambda(\nu)$ by choosing $t\sim \nu^{-1/3}$.

As mentioned earlier, the enhanced dissipation properties of $b$ are closely linked to its mixing features, as defined below.

\begin{definition}\label{def:mixing rate}
  Let $\gamma: [0,+\infty) \to [0,+\infty)$ be a continuous and decreasing function vanishing at infinity. The velocity field $b$ is \textit{mixing} with rate $\gamma(t)$ if for every $\rho_0\in H^1$ with zero streamlines-average we have the following estimate
  \begin{equation}\label{eq:mixingdefo}
  	\| \rho(t) \|_{H^{-1}} \le \gamma(t) \| \rho_0 \|_{H^1}\, ,
  \end{equation}
  for every $t\geq 0$.
\end{definition}

Due to the conservation of the $L^2$ norm in the transport equation \eqref{e:transport}, the mixing estimate \eqref{eq:mixingdefo} implies that the $H^1$ norm of $\rho$
has to grow at least as $1/\gamma(t)$. This has been already observed in the case of shear flow with critical points \cite{BCZ17}, and we will provide another example
when dealing with the standard cellular flow below (see Section \ref{sub:cellflow}).

\subsection{Elliptic points}
\label{subsec:elliptic}

In the presence of elliptic points, mixing rates can be slower and in turn affect the enhanced dissipation time-scale. This is explicit 
in the case of shear flows $b=(v(x_2),0)$ on $\TT^2$, where the crucial role is played by the order of vanishing of derivatives of $v$
at critical points \cite{BCZ17}. The case of a velocity field generated by a Hamiltonian is in general much more complicated, as 
level sets of $H$ are not, as for shear flows, simply horizontal lines (see Figure \ref{fig:hamilton}).
\begin{figure}[h!]
  \centering
  \begin{subfigure}[b]{0.45\linewidth}
    \includegraphics[width=\linewidth]{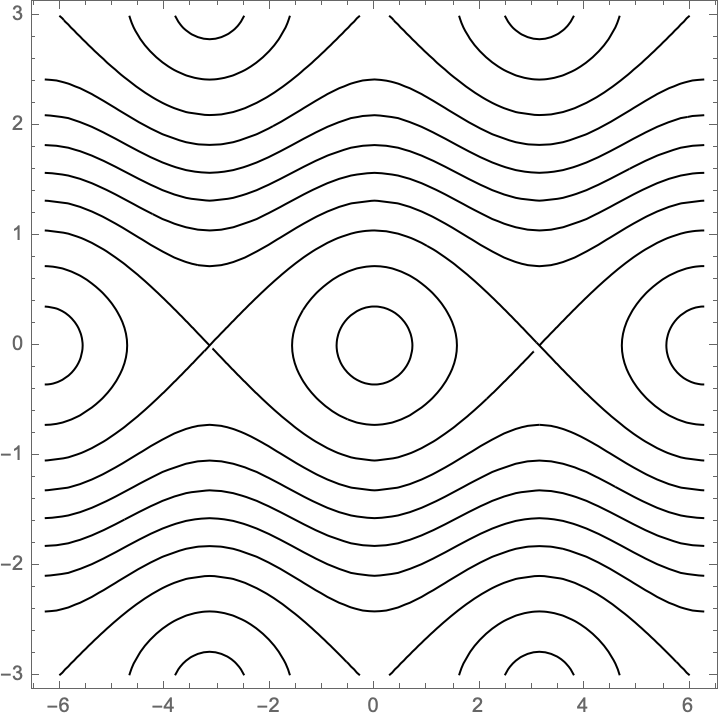}
     \caption{A general hamiltonian on a periodic domain.}
       \label{fig:hamilton}
  \end{subfigure}
\hfill
  \begin{subfigure}[b]{0.45\linewidth} 
    \includegraphics[width=\linewidth]{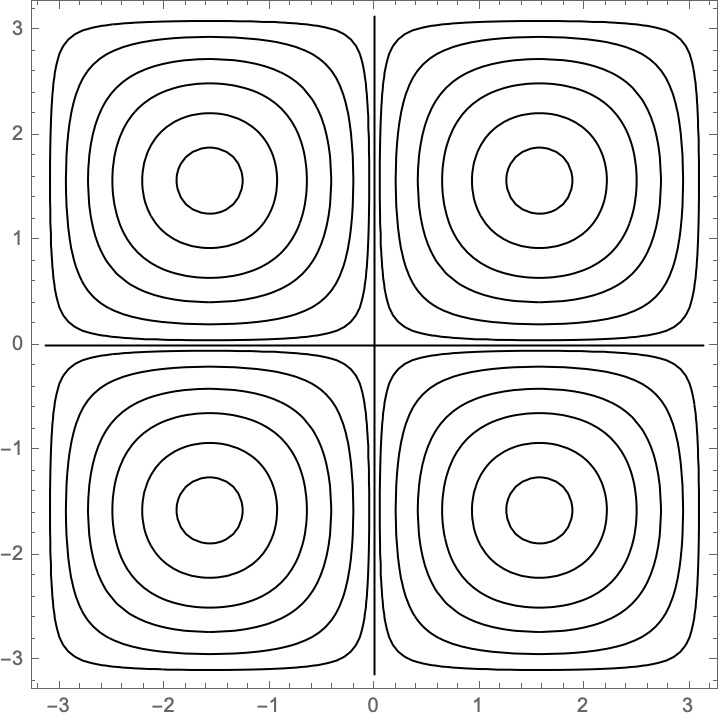}
    \caption{The standard cellular flow.}
     \label{fig:cellularstand}
  \end{subfigure}
\end{figure}

By compactness, $H$ always admits a minimum point in $p\in M$. To avoid degeneracies, we assume that $p\notin \partial M$ is an isolated elliptic point, i.e. $\nabla^2 H(p)$ is positive definite. To study the system around $p\in M$, we use local coordinates such that $p=(0,0)$.

Let $r_0>0$ be fixed (possibly small), and let $r\in (0,r_0)$.  Consider the unique closed orbit $\ell_r=\{x\in M: H(x)=H(re_1)\}\subset M$ 
containing the point $re_1=(r,0)$, and denote by $T(r)$ its period, which is a $C^1$ function since $r\to H(r e_1)$ is $C^1$ and $T$ is a $C^1$ function of $H$. 
By using that $H(re_1)\sim r^2$, $\|b\|_{L^\infty(\ell_r)}\lesssim r$, and \eqref{eq:period} we immediately get $T(0)>0$.

\begin{theorem}\label{thm:elliptic}
Let $H\in C^3(M)$ have an isolated, non-degenerate elliptic point $p\in M\setminus \partial M$. Assume that $T'(r)\sim r^{\beta}$ for some $\beta \ge 0$.
Assume that $b:= \nabla^{\perp} H$ is mixing (resp. dissipation enhancing) with rate $\gamma(t)$ (resp. $\lambda(\nu)$). Then
\begin{align}
	\gamma(t)\geq \frac{C_0}{\l t\r^{\frac2{\beta+1}}} \qquad \mbox{and} \qquad \lambda(\nu)\leq C_0\nu^{\frac{1+\beta}{3+\beta}}
\end{align}
for some positive constant $C_0=C_0(H)$.
\end{theorem}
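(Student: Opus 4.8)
\emph{Proof strategy.} The plan is to reduce everything, near the elliptic point $p$, to an explicit shear flow through \emph{action--angle coordinates}, and then to test the inviscid and viscous equations on data concentrating towards $p$. Normalizing $H(p)=0$, the Morse lemma gives $r_0>0$ and a chart $(\theta,I)\in(\RR/2\pi\ZZ)\times[0,I_0)$ on $\{|x-p|<r_0\}$ in which the orbits $\ell_r$ are the circles $\{I=\mathrm{const}\}$, $H=K(I)$ depends on $I$ alone, the transport flow is the shear $\Phi_{-s}\colon(\theta,I)\mapsto(\theta-\omega(I)s,I)$ with $\omega(I)=K'(I)=2\pi/T(r(I))$, the Riemannian volume is $\asymp \dd I\,\dd\theta$, and the metric is $\asymp \tfrac1I\,\dd I^2+I\,\dd\theta^2$, so that $|\nabla f|^2\asymp I|\partial_I f|^2+\tfrac1I|\partial_\theta f|^2$. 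Using the classical relation $\dd I/\dd h=T(h)/2\pi$ together with $H(re_1)\asymp r^2$ and the non-degeneracy of $p$ (here $C^3$ is used), one gets $I\asymp r^2$ and $\dd r/\dd I\asymp I^{-1/2}$, hence from $T'(r)\sim r^{\beta}$ and $T(0)>0$ the two-sided asymptotics $|\omega'(I)|\asymp|\dd T/\dd I|\asymp I^{(\beta-1)/2}$ as $I\to0$.

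Fix $I_*\in(0,I_0/2)$ and the test datum $\rho_0=g(I)\cos\theta$, with $g$ a fixed bump of height $1$ supported in $[I_*,2I_*]$ (so $|g'|\lesssim 1/I_*$): this is a smooth, mean-free function on $M$, supported in the disc around $p$ (hence away from $\partial M$ for all times), with zero streamline-average, and the volume/metric formulas give $\|\rho_0\|_{L^2}^2\asymp I_*$ and $\|\rho_0\|_{H^1}^2\asymp\|\nabla\rho_0\|_{L^2}^2\asymp 1$. For the mixing bound, the transported solution $\rho(t)=g(I)\cos(\theta-\omega(I)t)$ still lives on the invariant annulus $\{I\in[I_*,2I_*]\}$, and the gradient formula yields $\|\rho(t)\|_{H^1}^2\lesssim 1+I_*^{\beta+1}t^2$. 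Combining $\|\rho(t)\|_{L^2}=\|\rho_0\|_{L^2}$ with the interpolation $\|f\|_{L^2}^2\le\|f\|_{H^{-1}}\|f\|_{H^1}$ gives $\|\rho(t)\|_{H^{-1}}\gtrsim I_*(1+I_*^{\beta+1}t^2)^{-1/2}$; choosing $I_*=\l t\r^{-2/(\beta+1)}$ (admissible once $t$ is large, and handling bounded $t$ by monotonicity and positivity of $\gamma$) makes $1+I_*^{\beta+1}t^2\asymp 1$ and produces $\|\rho(t)\|_{H^{-1}}\gtrsim\l t\r^{-2/(\beta+1)}\|\rho_0\|_{H^1}$, i.e.\ $\gamma(t)\ge C_0\l t\r^{-2/(\beta+1)}$.

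For the enhanced dissipation bound, run \eqref{e:adv-diff} from the same $\rho_0$. Writing $w=\rho^\nu-\rho$, one has $w(0)=0$ and $\partial_t w+b\cdot\nabla w=\nu\Delta w+\nu\Delta\rho$, and (since all functions involved are supported away from $\partial M$) the energy identity gives $\|w(t)\|_{L^2}^2\le\tfrac\nu2\int_0^t\|\nabla\rho(s)\|_{L^2}^2\,\dd s$. As $\rho(s)=\rho_0\circ\Phi_{-s}$ with $\Phi_{-s}$ volume-preserving and the annulus $\{I\in[I_*,2I_*]\}$ invariant, $\|\nabla\rho(s)\|_{L^2}\le\|D\Phi_{-s}\|_{L^\infty(\{I\in[I_*,2I_*]\})}\|\nabla\rho_0\|_{L^2}$ with $\|D\Phi_{-s}\|_{L^\infty}\lesssim 1+I_*^{(\beta+1)/2}s$ (the extra power of $I_*$ relative to the naive $|\omega'|s\asymp I_*^{(\beta-1)/2}s$ being exactly the action--angle metric distortion). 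Hence $\|w(t)\|_{L^2}^2\lesssim\nu\bigl(t+I_*^{\beta+1}t^3\bigr)$, to be compared with $\|\rho(t)\|_{L^2}^2=\|\rho_0\|_{L^2}^2\asymp I_*$. Taking $I_*=\nu^{2/(3+\beta)}$ and $t_\nu=c\,\nu^{-(\beta+1)/(3+\beta)}$ with $c$ small (so $\nu t_\nu=c I_*$ and $\nu I_*^{\beta+1}t_\nu^3=c^3 I_*$), both error terms are $\le\tfrac14\|\rho_0\|_{L^2}^2$, whence $\|\rho^\nu(t_\nu)\|_{L^2}\ge\tfrac12\|\rho_0\|_{L^2}$; inserting this into \eqref{eq:decay} gives $\tfrac12\le A\,\e^{-\lambda(\nu)t_\nu}$, that is $\lambda(\nu)\le\ln(2A)/t_\nu=C_0\nu^{(\beta+1)/(3+\beta)}$, valid for all $\nu$ small enough that $I_*\le I_0/2$.

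I expect the conceptual part above to be short; the bulk of the work is the preliminary normal-form analysis: constructing action--angle coordinates around a non-degenerate elliptic point of a merely $C^3$ Hamiltonian on a manifold, establishing the sharp two-sided equivalence $|\omega'(I)|\asymp I^{(\beta-1)/2}$ from the hypothesis $T'(r)\sim r^{\beta}$, and --- the genuinely delicate point --- keeping \emph{uniform} control of the coordinate distortion (the factors $\sqrt{I_*}^{\pm1}$) when transferring the $L^2$, $H^{\pm1}$ and $\|D\Phi_{-s}\|$ estimates between action--angle coordinates and the ambient metric $g$, since the test data are forced to concentrate at $p$ as $t\to\infty$ (resp.\ $\nu\to0$).
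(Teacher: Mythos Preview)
Your proposal is correct and follows the same overall strategy as the paper: choose test data supported on a thin annulus near the elliptic point, use that the flow map grows linearly in time with an $r$-dependent rate governed by $T'(r)$, apply the interpolation $\|\rho\|_{L^2}^2\le\|\rho\|_{H^{-1}}\|\rho\|_{H^1}$ for the mixing bound and the energy inequality $\tfrac{\dd}{\dd t}\|\rho^\nu-\rho\|_{L^2}^2\le\nu\|\nabla\rho\|_{L^2}^2$ for the dissipation bound, and finally optimize over the radius. Your parameter $I_*\asymp r^2$ and your normalization $\|\rho_0\|_{L^2}^2\asymp I_*$, $\|\nabla\rho_0\|_{L^2}^2\asymp 1$ are simply a rescaling of the paper's choice $\|\rho_{0,r}\|_{L^2}=1$, $\|\nabla\rho_{0,r}\|_{L^2}\asymp 1/r$, and lead to the same optimized exponents.

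The one noteworthy difference is in execution. You plan to build full action--angle coordinates near $p$ and then argue via the explicit shear $(\theta,I)\mapsto(\theta-\omega(I)s,I)$, flagging as ``the genuinely delicate point'' the uniform control of the coordinate distortion as the data concentrate at $p$. The paper sidesteps this entirely: instead of action--angle variables, it proves the pointwise bound $|\nabla X(t,x)|\le 1+C_1\,r\,|T'(r)|\,t/T(r)$ for $x\in\ell_r$ directly (Lemma~\ref{lem:growthellipt}), via the elementary comparison
\[
|X(kT(r),re_1)-X(kT(r),(r+\delta)e_1)|\le \delta+\|b\|_{L^\infty(\ell_r)}\,k\,|T(r+\delta)-T(r)|,
\]
followed by $\delta\to0$ and $\|b\|_{L^\infty(\ell_r)}\lesssim r$. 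This gives exactly your $\|D\Phi_{-s}\|_{L^\infty}\lesssim 1+I_*^{(\beta+1)/2}s$ without any change of variables, and the rest of the argument (your second and third paragraphs) then runs in physical coordinates with $\|\nabla\rho_{0,r}\|_{L^2}\asymp 1/r$. So the ``bulk of the work'' you anticipate is in fact unnecessary; the direct estimate is both shorter and avoids the $C^3$ normal-form analysis.
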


These estimates are obtained by studying the dynamics around the elliptic point of $H$. This local analysis improves the available bound on the mixing rate for Hamiltonian flows when $\beta >1$ (see \cite{BM21}), as well as the bound on the dissipation enhancing rate as soon as $\beta >0$, compare with Theorem \ref{thm:main}.

The cellular flow $H_\mathsf{c}(x)=\sin x_1 \sin x_2$ satisfies the assumption of the theorem with $\beta=1$.

\begin{remark}
The heuristic built on the case of shear flows, which allows to deduce the dissipation enhancing rate $\lambda(\nu) \sim \nu^{\frac13}$ from the mixing rate $\gamma(t) \sim t^{-1}$ does not match with the previous result: this difference can be attributed to the different geometry of the level sets of $H$ in a shear flow and around an elliptic point. 
\end{remark}

\subsection{Cellular flows}\label{sub:cellflow}
Cellular flows (along with shear and radial flows) are perhaps the most studied two-dimensional flows,
especially from the point of view of fluid dynamics, homogenization and as random perturbations of
dynamical systems \cites{RhinesYoung83,Koralov04,Heinze03,NPR05,FP94,CS89}. Strictly related to the idea of dissipation enhancement, there has been a number of articles \cites{IZ22,FX22,IXZ21} 
dealing with suitable rescalings of cellular flows (which create small scales, and hence roughness in the velocity field) and proving that the dissipation time can be made arbitrarily small by taking the rescaling parameter
small. 

On the contrary, we here consider the standard cellular flow $H_{\sfc}(x) = \sin x_1 \, \sin x_2$ on $\TT^2$, as depicted in Figure  \ref{fig:cellularstand}, and 
prove a direct estimate on the mixing and enhanced dissipation rates.
\begin{theorem}\label{thm:cell}
Consider the standard cellular flow $b_{\sfc}= \nabla^{\perp}H_{\sfc}$ with  $H_{\sfc}(x) = \sin x_1 \, \sin x_2$ on $\TT^2$. Then for every $\eps>0$ the vector field $b_{\sfc}$ is mixing 
with rate
\begin{equation}\label{eq:mixingcell}
\frac{1}{\l t\r}\lesssim \gamma_{\sfc}(t) \lesssim_\eps \frac1{\l t\r^{\frac13-\eps}},
\end{equation}
and dissipation enhancing with  rate
\begin{equation}\label{eq:enhancedcell}
 \nu^{6/7+\eps}\lesssim_\eps\lambda_{\sfc}(\nu)\lesssim \nu^{1/2}.
\end{equation}
\end{theorem}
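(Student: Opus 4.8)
The plan is to treat the four bounds separately, but the conceptual core is to combine the general machinery of Theorems \ref{thm:main} and \ref{thm:elliptic} with a sharp quantitative analysis of the action-angle coordinates for $H_{\sfc}$ near its elliptic points and near its separatrix (the cell boundary $\{\sin x_1 \sin x_2 = 0\}$). I would first record the geometry: on the fundamental cell $(0,\pi)^2$ the level sets $\{H_{\sfc}=h\}$ for $h\in(0,1)$ are smooth closed curves, degenerating to the four-fold symmetric point $(\pi/2,\pi/2)$ as $h\to 1$ and to the boundary square as $h\to 0$. The key analytic input is the asymptotics of the period $T(h)$: near the elliptic point $H_{\sfc}$ is a non-degenerate quadratic form, so $T(h)\to T(1^-)>0$ with $T'(h)\sim (1-h)$ in the appropriate variable, giving $\beta=1$ as claimed; near the separatrix $h\to 0$ one has the classical logarithmic blow-up $T(h)\sim |\ln h|$, which is what ultimately caps the gain at $\nu^{1/2}$ rather than something faster.

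For the upper bound $\lambda_{\sfc}(\nu)\lesssim \nu^{1/2}$ in \eqref{eq:enhancedcell}: I would localize inside one cell, pass to action-angle coordinates $(I,\theta)$ in which the transport operator becomes $\omega(I)\partial_\theta$ with $\omega(I)=2\pi/T(I)$, and run the same invariant-domain argument as in Theorem \ref{thm:main} but with the improved shear rate coming from $\beta=1$; concretely, one builds a test initial datum supported in an annular region where $\omega'(I)\ne 0$, uses \eqref{z6}-type control $\|\rho^\nu(t)-\rho(t)\|_{L^2}^2\lesssim \nu t^3$ refined to $\nu t^3$ with the $\beta=1$ improvement of Theorem \ref{thm:elliptic} yielding the exponent $(1+\beta)/(3+\beta)=1/2$, and optimizes at $t\sim\nu^{-1/2}$. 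The upper bound $\gamma_{\sfc}(t)\lesssim_\eps \langle t\rangle^{-1/3+\eps}$ for mixing is the genuinely new estimate here: I would prove it by a stationary-phase / non-stationary-phase splitting in the angle variable, testing $\|\rho(t)\|_{H^{-1}}$ against $H^1$ functions, exploiting that $\rho(t)$ in action-angle coordinates is $\rho_0$ transported by the linear shear $\theta\mapsto\theta-\omega(I)t$, so that its negative Sobolev norm decays like the decay of Fourier coefficients in $\theta$ weighted against the oscillation $e^{-ik\omega(I)t}$; the $\eps$-loss and the exponent $1/3$ come from interpolating the region near the separatrix where $\omega(I)$ has unbounded derivatives (logarithmic singularity) against the bulk where the shear is non-degenerate. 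The matching lower bound $\gamma_{\sfc}(t)\gtrsim\langle t\rangle^{-1}$ is the general obstruction: it follows from the a priori bound on the growth of $\|\nabla\rho(t)\|_{L^2}$, since $b_{\sfc}\in W^{1,\infty}$ forces $\|\nabla\rho(t)\|_{L^2}\lesssim e^{Ct}$ — no wait, one needs the correct polynomial statement — so I would instead use that $H_{\sfc}\in C^\infty$ and the flow map has gradient growing at most linearly in time (the invariant-domain property, here global up to the separatrix), hence $\|\rho(t)\|_{H^1}\lesssim\langle t\rangle\|\rho_0\|_{H^1}$, and combine with the duality characterization of mixing to get $\gamma_{\sfc}(t)\gtrsim\langle t\rangle^{-1}$.

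For the lower bound $\lambda_{\sfc}(\nu)\gtrsim_\eps \nu^{6/7+\eps}$ in \eqref{eq:enhancedcell}: this is where I expect the main difficulty. The idea is to pick an initial datum concentrated in a thin annulus of width $\delta$ around a well-chosen level set near the separatrix, where on the one hand the shear is weak (so transport mixes slowly) and on the other hand the Neumann/diffusion interaction is subtle because of the logarithmic geometry; one then lower-bounds $\|\rho^\nu(t)\|_{L^2}$ by a hypocoercivity-type energy estimate in action-angle coordinates, tracking the competition between the diffusive leak rate across the annulus ($\sim \nu/\delta^2$ plus curvature corrections from the change of variables, which near the separatrix contribute factors of $|\ln\delta|$) and the enhanced-dissipation rate induced by the shear ($\sim (\nu \omega'(I)^2 \delta^2)^{1/3}$ on time scales where the shear has acted). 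Optimizing the width $\delta=\delta(\nu)$ and carefully accounting for the logarithmic loss near $h=0$ is what produces the exponent $6/7$ together with the arbitrarily small $\eps$-loss; the hard part will be getting a clean two-sided control of the metric coefficients of the action-angle change of variables uniformly as one approaches the separatrix, since this is exactly where $H_{\sfc}$ ceases to be Morse in a uniform way and the standard Arnold–Liouville estimates degenerate logarithmically.
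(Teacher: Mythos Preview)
Your treatment of the upper bound $\lambda_{\sfc}(\nu)\lesssim\nu^{1/2}$, the lower bound $\gamma_{\sfc}(t)\gtrsim\langle t\rangle^{-1}$, and the upper bound $\gamma_{\sfc}(t)\lesssim_\eps\langle t\rangle^{-1/3+\eps}$ is broadly aligned with the paper: the first two come directly from Theorem~\ref{thm:elliptic} with $\beta=1$, and the third is exactly the stationary-phase argument in action-angle coordinates that the paper carries out (Lemmas~\ref{l:estimates_T} and~\ref{l:est_action_angle_new} together with the appendix). One small correction: your claim that the flow gradient grows at most linearly ``globally up to the separatrix'' is false---near the corners of the cell the dynamics are hyperbolic and trajectories separate exponentially---but this does not damage the mixing lower bound, which only requires a single initial datum supported in a good invariant region.

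The genuine gap is in your plan for the lower bound $\lambda_{\sfc}(\nu)\gtrsim_\eps\nu^{6/7+\eps}$. You propose to ``pick an initial datum concentrated in a thin annulus'' and ``lower-bound $\|\rho^\nu(t)\|_{L^2}$''. This is backwards: a lower bound on the enhanced dissipation rate is the statement that \emph{every} zero-streamline-average initial datum decays at rate at least $\nu^{6/7+\eps}$, so exhibiting a single datum and lower-bounding its $L^2$ norm can only ever produce an \emph{upper} bound on $\lambda_{\sfc}$. The paper's argument for this direction is in fact a one-liner: once the mixing upper bound $\gamma_{\sfc}(t)\lesssim_\eps t^{-1/3+\eps}$ is established, one invokes the general result \cite{CZDE20}*{Theorem~2.1}, which converts a polynomial mixing rate into a lower bound on the enhanced dissipation rate; the exponent $6/7$ arises precisely from feeding the mixing exponent $1/3$ into that conversion. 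So the ``main difficulty'' you anticipate is absorbed entirely into the mixing upper bound plus a black-box citation---no hypocoercivity construction, annulus optimization, or additional separatrix analysis is needed for this direction.
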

The lower bound on the mixing rate and the upper bound on the enhanced dissipation rate are a consequence of Theorem \ref{thm:elliptic}.
As we shall see, our approach is based on a careful study of the action-angle coordinates near the elliptic and hyperbolic points of $H_{\sfc}$, \emph{without}
making use of  the well-known estimates of the coefficient of effective diffusivity of $H_{\sfc}$ from homogenization theory \cites{FP94,Koralov04,CS89}. We stress that
the rates \eqref{eq:mixingcell} and  \eqref{eq:enhancedcell} are \emph{global} in nature, and do not distinguish where the initial datum is supported. 
In fact (see Section  \ref{sub:globalmix}), the upper bound in \eqref{eq:mixingcell} and the lower bound in \eqref{eq:enhancedcell} come from data that are supported near
the hyperbolic points of $H_{\sfc}$, as these are the hardest to control with our methods.  Near elliptic points, the solution is mixed at the faster rate
$\l t\r^{-1+\eps}$, where $\eps>0$ can be taken arbitrarily small, while away from both elliptic and hyperbolic points, the mixing rate improves to $\l t\r^{-1}$. 
In particular, the  lower bound in \eqref{eq:mixingcell} is sharp.

\section{Gradient estimates for transport equations}
In this section, we focus on the transport equation \eqref{e:transport}
and its associated flow $X:\mathbb{R}\times M\to M$ defined as the solution to the ODE 
\begin{align}\label{e:Lagr}
\begin{cases}
\partial_t X(t,x) = b(t,X(t,x)),\\
X(0,x) = x .
\end{cases}
\end{align}
Besides some regularity assumptions, which will be specified later, we assume that $H=0$ on $\partial M$ whenever $\partial M \neq \emptyset$.
 We will call a set $E\subset M$ \emph{invariant} (under the flow $X$) if $X(t,E)=E$ for every $t\geq 0$. The key step in the proof of Theorem \ref{thm:main} 
 is the existence of a good invariant set for $X$.

  \begin{proposition}\label{prop:keystep}
 Let $H\in C^1\cap W^{2,p}(M)$ for some $p\geq 1$.
  There exists an invariant  open set $\Omega 	\subset M$ such that for any $\rho_0 \in C^1(M)$ with 
	$\spt (\rho_0) \subset \Omega$, the corresponding solution $\rho$ of \eqref{e:transport} satisfies  
  	\begin{equation}\label{eq:reg}
  		\| \nabla \rho( t) \|_{L^p}
  		\le C(\Omega, H) (1+t)\|\nabla \rho_0\|_{L^\infty},
  	\end{equation}
for all $t\geq 0$.
  \end{proposition}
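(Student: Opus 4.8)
The plan is to reduce \eqref{eq:reg} to a linear‑in‑time bound for the Jacobian of the Lagrangian flow on a well‑chosen invariant region, and to extract that bound from the periodicity of the flow on the non‑critical level curves of $H$ together with the $W^{2,p}$‑control on the associated monodromy.

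\emph{Reduction to the flow gradient, and the choice of $\Omega$.} Since $H$ is a first integral of $b$, the transported solution satisfies $\rho(t,X(t,x))=\rho_0(x)$, and differentiating in $x$ gives $(\nabla\rho)(t,X(t,x))=\nabla\rho_0(x)\,(\nabla X(t,x))^{-1}$. As $b=\nabla^\perp H$ is incompressible, $X(t,\cdot)$ preserves the volume measure and $\det\nabla X(t,x)\equiv1$ (in volume coordinates), so $\|(\nabla X(t,x))^{-1}\|\le C(\overline\Omega)\|\nabla X(t,x)\|$ for the relevant $2\times2$ matrices; changing variables $y=X(t,x)$ yields, for any $\rho_0$ with $\spt(\rho_0)\subset\Omega$,
\begin{equation}
	\|\nabla\rho(t)\|_{L^p(M)}^p \;\le\; \|\nabla\rho_0\|_{L^\infty}^p\int_\Omega\|\nabla X(t,x)\|^p\,\dd x .
\end{equation}
Thus it suffices to produce an invariant open set $\Omega$ with $\int_\Omega\|\nabla X(t,x)\|^p\,\dd x\le C(\Omega,H)(1+t)^p$. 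For $\Omega$ I take an invariant annular neighbourhood of a non‑critical periodic orbit: one picks $h_0$ and a connected component $\gamma_0$ of $\{H=h_0\}$ that is a $C^1$ Jordan curve disjoint from $\{\nabla H=0\}\cup\partial M$ (which exists because $H$ is non‑constant, e.g.\ among the level sets near a strict local extremum of $H$); by continuity of $\nabla H$, the corresponding components $\gamma_h$ of $\{H=h\}$ for $h$ in a small interval $I\ni h_0$ are again $C^1$ Jordan curves free of critical points, so $\Omega:=\bigcup_{h\in I}\gamma_h$ is open, invariant, and $0<c\le|\nabla H|\le C$ on $\overline\Omega$. On each $\gamma_h$ the flow of $b$ is periodic with minimal period $T(h)=\oint_{\gamma_h}|\nabla H|^{-1}\dd\haus^1\in[T_{\min},T_{\max}]\subset(0,\infty)$; in particular $X(T(h),\cdot)$ fixes $\gamma_h$ pointwise.

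\emph{Linear growth of $\nabla X$ on $\Omega$ (the infinitesimal action--angle picture).} On $\overline\Omega$ use the uniformly non‑degenerate moving frame $(b,w)$ with $w:=-\nabla H/|\nabla H|^2$, so that $\dd H(w)\equiv-1$. Differentiating $b(X(t,x))$ and $H(X(t,x))=H(x)$ along the flow gives $\nabla X(t,x)\,b(x)=b(X(t,x))$ and $\nabla X(t,x)\,w(x)=w(X(t,x))+\mu(t,x)\,b(X(t,x))$, where the scalar shear $\mu$ solves $\de_t\mu=-\beta(X(t,x))$, $\mu(0,x)=0$, and $\beta$ is defined on $\Omega$ by $[b,w]=\beta\,b$ — the bracket is a multiple of $b$ because $\dd H([b,w])=0$ (as $H$ is invariant and $\dd H(w)$ is constant). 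Hence in the frame $(b,w)$ the matrix $\nabla X(t,x)$ is unipotent, equal to $\left(\begin{smallmatrix}1&\mu(t,x)\\0&1\end{smallmatrix}\right)$, so $\|\nabla X(t,x)\|\le C(\overline\Omega)(1+|\mu(t,x)|)$. Since $X(T(h),x)=x$, the map $\tau\mapsto\beta(X(\tau,x))$ is $T(h)$‑periodic and $\int_0^{T(h)}|\beta(X(\tau,x))|\,\dd\tau=\oint_{\gamma_h}|\beta|\,|\nabla H|^{-1}\dd\haus^1=:B(h)$ is independent of the base point $x\in\gamma_h$; integrating over at most $1+t/T_{\min}$ periods gives $|\mu(t,x)|\le(1+t/T_{\min})\,B(H(x))$, hence $\|\nabla X(t,x)\|\le C(\Omega,H)(1+t)\,(1+B(H(x)))$ for a.e.\ $x\in\Omega$.

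\emph{The bound $B\in L^p$ from $W^{2,p}$, and conclusion.} By the coarea formula and Jensen's inequality against the measure $|\nabla H|^{-1}\dd\haus^1$ on $\gamma_h$ (of total mass $T(h)\le T_{\max}$),
\begin{equation}
	\int_\Omega\|\nabla X(t,x)\|^p\,\dd x \;\lesssim\; (1+t)^p\int_I(1+B(h))^p\,T(h)\,\dd h \;\lesssim_{\Omega,H}\; (1+t)^p\Big(1+\int_\Omega|\beta|^p\,\dd x\Big).
\end{equation}
Finally $\beta=\langle[b,w],b\rangle/|b|^2$ is built from first and second derivatives of $H$ divided by powers of $|\nabla H|$, so on $\Omega$ (where $|\nabla H|\ge c$) one has $|\beta|\lesssim_c|\nabla^2H|$ and therefore $\int_\Omega|\beta|^p\,\dd x\lesssim_c\|\nabla^2H\|_{L^p(M)}^p<\infty$ because $H\in W^{2,p}$. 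Combining the three displays yields \eqref{eq:reg}, with $C(\Omega,H)$ depending only on $c$, $T_{\min}$, $T_{\max}$, $|I|$ and $\|\nabla^2H\|_{L^p(M)}$.

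\emph{Main obstacle.} The real work is regularity bookkeeping. Since $b$ is only $W^{1,p}$, $X$ is the regular Lagrangian flow of a Sobolev vector field rather than a classical flow, so the identities in the middle step — equivalently, the action--angle change of variables $(h,\varphi)$ on $\Omega$ in which $X$ becomes $(h,\varphi)\mapsto(h,\varphi+t/T(h))$ and $\nabla X$ the shear above — must be justified at the level of the a.e.-defined flow and its Sobolev/approximate differential. This is exactly where one exploits $|\nabla H|\ge c>0$ on $\overline\Omega$ (so that $b$, $w$ and $\beta$ are genuinely $W^{1,p}$ there), the one‑dimensional nature of the dynamics on each $\gamma_h$, and the DiPerna--Lions/Ambrosio theory of regular Lagrangian flows for $W^{1,p}$ fields. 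A secondary, softer point is the construction of $\Omega$ itself, i.e.\ locating a non‑critical periodic orbit of $b$ starting only from $H\in C^1$ non‑constant.
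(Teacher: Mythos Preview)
Your argument is correct in the smooth case and morally correct in general; the two points you flag as obstacles are exactly where the paper invests its effort, and it resolves them a bit differently from what you sketch.

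\emph{Comparison of approaches.} For the core step (linear growth of the flow gradient on $\Omega$), the paper builds a \emph{global} action--angle diffeomorphism $\Phi(\theta,h)=X(\theta T(h),x(h))$ on a connected component of $\Omega$, proves the explicit formula $\Psi(X(t,x))=\Psi(x)+t/T(H(x))$ for the angle, and reads off a \emph{Lipschitz} bound $\dd_M(X(t,x),X(t,y))\lesssim(1+t)\,\dd_M(x,y)$ before passing to $\nabla\rho$. You instead linearize: the moving frame $(b,w)$ with $[b,w]=\beta b$ makes $\nabla X$ unipotent with shear $\mu(t,x)=-\int_0^t\beta(X(s,x))\,\dd s$, and you integrate the periodic integrand over $\lfloor t/T\rfloor$ periods. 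This is exactly the infinitesimal version of the paper's computation (your $\beta$ is the derivative along $w$ of the angular speed $1/T$, and your $B(h)$ plays the role of $|T'(h)|$ up to bounded factors). Your route avoids constructing the global chart $\Phi$ and lands directly on an $L^p$ bound via coarea/Jensen, which is all that is needed; the paper's route gives a slightly stronger pointwise Lipschitz estimate in the $C^2$ case.

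\emph{The two obstacles.} For the construction of $\Omega$, the ``level sets near a strict local extremum'' heuristic is not sufficient for a bare $C^1$ Hamiltonian (strict extrema need not exist). The paper uses the Morse--Sard theorem for $C^1\cap W^{2,1}$ functions from \cite{BKK13} to find an interval of regular values, which is the clean fix you should cite. For the low-regularity case $H\in C^1\cap W^{2,p}$, your frame identities involve $\beta\sim\nabla^2H\in L^p$ (not $W^{1,p}$ as you wrote), so the ODE $\partial_t\mu=-\beta\circ X$ and the pointwise bound $|\mu|\le (1+t/T_{\min})B(H(x))$ need to be justified trajectory-wise for the regular Lagrangian flow. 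The paper does \emph{not} carry this out from scratch: it invokes a result of \cite{Mar21} which, in this 2D Hamiltonian setting, yields $\dd_M(X(t,z),X(t,\bar z))\lesssim(1+t)\big(\dd_M(z,\bar z)+|g(H(z))-g(H(\bar z))|\big)$ with $g(h)=\int_{\{H\le h\}}|Db|$; note $g'(h)$ is essentially your $B(h)$, so the content matches. Your sketch via DiPerna--Lions/Ambrosio is plausible, but making the unipotent-frame argument rigorous at the RLF level amounts to reproving that lemma.
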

Despite the possible presence of hyperbolic points in $H$, the set $\Omega$ is one where only shearing is possible. As a consequence, the growth of $\nabla \rho$
is limited to be linear in time, excluding for instance exponential growth.

\subsection{Construction of a good invariant domain}
To build a suitable invariant set, the idea is to employ a variation of the classical Morse-Sard lemma \cite{Morse39,Sard42} proven in \cite{BKK13}. 

\begin{lemma}\label{lemma:invariant set}
 Let $H\in C^1\cap W^{2,p}(M)$ for some $p\geq 1$.
	There exist a constant $c_0>0$ and an interval $(h_0,h_1)\subset H(M)$ such that $|b(x)|\ge c_0$ for any $x$ in the invariant set $\Omega:= H^{-1}((h_0,h_1))$.
	Moreover,  $\Omega \cap \partial M = \emptyset$.
\end{lemma}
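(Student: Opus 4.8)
The plan is to produce a value $h_*$ in the interior of $H(M)$ at which $H$ has no critical points, and such that $H^{-1}(h_*)$ stays away from $\partial M$; then a neighbourhood $(h_0,h_1)\ni h_*$ of regular values will do the job. The key input is a Morse--Sard-type theorem for $W^{2,p}$ functions (as in \cite{BKK13}): since $H\in C^1\cap W^{2,p}(M)$ on a $2$-manifold, the set of critical values of $H$ — that is, $H(\{\nabla H=0\})$ — has Lebesgue measure zero in $\RR$. Because $H$ is non-constant, $H(M)$ is a nondegenerate compact interval $[\min H,\max H]$; since $H=0$ on $\partial M$ when $\partial M\neq\emptyset$, the boundary is mapped to the single value $0$. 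I first rule out the degenerate situation: if $\partial M=\emptyset$ there is nothing to exclude, while if $\partial M\neq\emptyset$ then either $\min H<0$ or $\max H>0$ (indeed both cannot fail since $H\not\equiv 0$, using $\int_M b\cdot\nabla(\cdot)$ or simply that a non-constant continuous function on a connected manifold with $H|_{\partial M}=0$ cannot be one-signed unless it has an interior extremum, which gives an interior critical value anyway). In all cases there is an open subinterval $J\subset H(M)$ whose closure avoids $0$, hence $H^{-1}(\bar J)\cap\partial M=\emptyset$.

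Next I apply Morse--Sard: the critical values of $H$ form a null set, so $J\setminus H(\{\nabla H=0\})$ is nonempty and in fact of full measure in $J$; pick any $h_*$ in it. Then $\nabla H\neq 0$ on $H^{-1}(h_*)$, and by compactness of $H^{-1}(h_*)$ and continuity of $\nabla H$ there is a neighbourhood $U\supset H^{-1}(h_*)$ and $c_0>0$ with $|\nabla H|\geq 2c_0$ on $U$; since $H^{-1}(h_*)\cap\partial M=\emptyset$ we may also shrink $U$ so that $\overline U\cap\partial M=\emptyset$. Because $H$ is continuous and $H^{-1}(h_*)$ is a compact subset of the open set $U$, there is $\delta>0$ with $H^{-1}((h_*-\delta,h_*+\delta))\subset U$; set $(h_0,h_1):=(h_*-\delta,h_*+\delta)$ and $\Omega:=H^{-1}((h_0,h_1))$. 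On $\Omega$ we have $|b(x)|=|\nabla^\perp H(x)|=|\nabla H(x)|\geq 2c_0>c_0$, and $\Omega\cap\partial M=\emptyset$. Finally $\Omega$ is invariant under the flow $X$ because $H$ is a conserved quantity for $b=\nabla^\perp H$ (formally $\ddt H(X(t,x))=b\cdot\nabla H=\nabla^\perp H\cdot\nabla H=0$; this is rigorous since $b$ is at least $C^1$ near $\overline\Omega$, the closure being a compact subset of $M\setminus\partial M$ on which $b$ does not vanish, so the ODE \eqref{e:Lagr} has a well-defined flow preserving level sets of $H$), hence $X(t,\Omega)\subset\Omega$ for all $t$, and applying the same to $-b$ gives equality.

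The main obstacle is the use of the Morse--Sard theorem at the low regularity $C^1\cap W^{2,p}$ rather than $C^2$: the classical Morse--Sard lemma does not apply, and one must invoke the refinement of \cite{BKK13} valid for this Sobolev class in dimension two (this is exactly why the hypothesis $H\in W^{2,p}$, $p\geq 1$, enters). A secondary point requiring care is the case $\partial M\neq\emptyset$: one must verify that the interval of regular values can be chosen bounded away from $0$, which uses $H\not\equiv 0$ together with $H|_{\partial M}=0$; if $H$ happened to attain all of its extrema on $\partial M$ then $H\equiv 0$ by the maximum principle-type argument above, contradicting non-constancy, so an interior value strictly between $0$ and a strict extremum is available, and regular values accumulate there by Morse--Sard. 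Everything else — compactness extraction of $c_0$, the neighbourhood $\delta$, and invariance via conservation of $H$ — is routine.
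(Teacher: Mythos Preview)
Your argument is correct and follows essentially the same route as the paper: invoke the Morse--Sard theorem of \cite{BKK13} to find regular values, use compactness and continuity of $\nabla H$ to extract a uniform lower bound $c_0>0$ on a preimage $H^{-1}((h_0,h_1))$, and avoid the value $0$ to keep $\Omega$ away from $\partial M$ (the paper reverses the order, first finding the interval of regular values and then shrinking it at the end to exclude $0$ if necessary, but this is immaterial). One small slip: under the hypothesis $H\in C^1\cap W^{2,p}$ the field $b=\nabla^\perp H$ is only $C^0\cap W^{1,p}$, not $C^1$ as you assert near $\overline\Omega$; however conservation of $H$ along trajectories requires only $H\in C^1$ together with the absolute continuity of $t\mapsto X(t,x)$ (so that the chain rule gives $\ddt H(X(t,x))=\nabla H\cdot\nabla^\perp H=0$ a.e.), and hence the invariance of $\Omega$ is unaffected.
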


\begin{proof}
Define the set of critical values of $H$  by
\begin{equation}
	\cS:= \{ h \in \RR: \exists\, x \in M \mbox{ with } H(x)=h \mbox{ and }b(x)=0 \},
\end{equation}
 and by $\cR := H(M)\setminus \cS$ the set of regular values. 
Since $H\in C^1(M)$ and $M$ is compact, the set $\cS$ is closed and therefore $\cR$ is open in the range of $H$.
As shown in \cite{BKK13},  such $H$ enjoys the Sard property, namely $\cS$ has zero 1-dimensional Lebesgue measure on $\RR$. The latter can be reduced to a statement on a local chart, hence the fact that $M$ is not a subset of $\RR^2$ is irrelevant for the sake of applying \cite{BKK13}.
In particular, the sets $\cR$ and $\Omega_*:=H^{-1}(\cR)$ are nonempty open subsets of $H(M)$ and $M$, respectively.
It is straightforward to check that the map $c_S: \cR \to \RR$ defined by
\begin{equation}
	c_S(h)=  \min_{x\in H^{-1}(h)}|b(x)|
\end{equation}
is continuous on $\cR$. 
Being $H$ non-constant, there exists $c_0>0$ such that $c_S(h) >2c_0$ for some $h\in \cR$. 
Hence, we can find and interval $(h_0,h_1)\subset \cR$ with $h\in (h_0,h_1)$ such that $c_S>c_0$ on $(h_0,h_1)$.  We now set $\Omega:= H^{-1}((h_0,h_1))$. 
If $\partial M \cap \Omega = \emptyset$, we are done.  Otherwise, since $H$ vanishes on the boundary, $0\in (h_0,h_1)$. Replacing $(h_0,h_1)$ 
with smaller interval $(h_0',h_1')$ not containing 0, we can redefine   $\Omega= H^{-1}((h_0',h_1'))$. The fact that $\Omega$ is invariant follows
from its definition, 
and hence the proof is over.
\end{proof}

\begin{remark}
At this stage, the above lemma holds true for more general Lipschitz Hamiltonians whose gradient is a function of bounded variation, see \cite{BKK13}. 
\end{remark}

\subsection{Action-angle coordinates for $C^2$ Hamiltonians}\label{ss:action-angle}
Assume that $H\in C^2(M)$, and let $\Omega$ and $c_0>0$ as in Lemma \ref{lemma:invariant set}. 
    Let $ x_0 \in M$ be such that $H( x_0)=h_0$ and denote by $\Omega_0$ the connected component of $\Omega$ 
    such that $ x_0 \in\partial \Omega_0$. Given $h\in (h_0,h_1)$, we denote the period (relative to the flow map $X$ in \eqref{e:Lagr}) 
    of the closed orbit $\{H = h\}$ by $T(h)$, while $x=x(h):(h_0,h_1)\to M$ stands for the solution to the  ODE  
	\begin{equation}
		\begin{cases}
			x'(h)  = \frac{\nabla H}{|\nabla H|^2}(x(h)),\\
			x(h_0) =   x_0.
		\end{cases}
	\end{equation}
    Using, the flow map $X$ in \eqref{e:Lagr}, we define the coordinates $\Phi : \mathbb{S}^1 \times (h_0,h_1) \to \Omega$ by
	\begin{equation}
		\Phi(\theta, h) := X(\theta T(h), x(h)).
	\end{equation}
	Here $\mathbb{S}^1=[0,1)$. Notice that $H(x(h)) = h$ for any $h\in (h_0,h_1)$. The key properties of $\Phi$ are contained in the following proposition.

	\begin{lemma}\label{l:Phi}
		The map $\Phi : \mathbb{S}^1 \times (h_0,h_1) \to \Omega_0$ and its inverse are $C^1$ functions. Moreover, $\Phi^{-1}(x) = (\Psi(x), H(x))$ where $\Psi : \Omega_0 \to \mathbb{S}^1$ is a $C^1$ function satisfying 
		\begin{equation}\label{e:evolution_psi}
			\Psi(X(t ,x)) = \Psi(x) + \frac{t }{T(H(x))} ,
		\end{equation}
		for any $x\in \Omega$, $t\geq 0$.
	\end{lemma}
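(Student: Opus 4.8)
The plan is to build $\Phi$ explicitly from the regular Lagrangian flow $X$ restricted to $\Omega_0$, verify it is a diffeomorphism onto $\Omega_0$ of class $C^1$, and then read off the evolution law for the angle variable. Throughout we work on the connected component $\Omega_0$ of $\Omega$, where by Lemma \ref{lemma:invariant set} we have $|b|=|\nabla H|\geq c_0>0$, so the flow $X$ is $C^1$ in $(t,x)$ (since $b=\nabla^\perp H\in C^1$) and never vanishes.

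\emph{Step 1: the transversal curve $x(h)$ and the set-up.} The ODE defining $x(h)$ has a $C^1$ right-hand side on $\Omega_0$ (as $|\nabla H|\geq c_0$ there), so $x(\cdot)$ is a well-defined $C^1$ curve, and $\frac{\dd}{\dd h}H(x(h)) = \nabla H\cdot x'(h) = 1$, whence $H(x(h))=h$ for all $h\in(h_0,h_1)$. Geometrically $x(h)$ is a $C^1$ curve crossing each level set $\{H=h\}$ transversally exactly once near $x_0$; this is the Poincar\'e section. Because $|b|\geq c_0$ on $\Omega_0$ and each level set $\{H=h\}$ is a compact connected $C^1$ curve (a closed orbit of $X$), the flow through $x(h)$ is periodic with some minimal period $T(h)>0$; one has the classical formula $T(h)=\oint_{\{H=h\}}\frac{\dd\ell}{|\nabla H|}$, which together with $|\nabla H|\geq c_0$ and compactness gives $0<T(h)<\infty$ and, by differentiating under the integral sign (using $H\in C^2$), $T\in C^1$.

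\emph{Step 2: $\Phi$ is a $C^1$ bijection with $C^1$ inverse.} The map $\Phi(\theta,h)=X(\theta T(h),x(h))$ is a composition of $C^1$ maps ($X$ is $C^1$ in both arguments, $T$ and $x(\cdot)$ are $C^1$), so $\Phi\in C^1(\mathbb{S}^1\times(h_0,h_1);\Omega_0)$; it is well-defined on $\mathbb{S}^1=[0,1)$ by $T(h)$-periodicity of the orbit. Surjectivity onto $\Omega_0$: given $y\in\Omega_0$, its orbit is the level set $\{H=H(y)\}$, which meets the transversal at the single point $x(H(y))$, so there is a unique $s\in[0,T(H(y)))$ with $X(s,x(H(y)))=y$, i.e. $y=\Phi(s/T(H(y)),H(y))$. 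Injectivity follows from the same uniqueness statement (distinct $h$ give distinct level sets; for fixed $h$, distinct $\theta$ give distinct points by minimality of the period). For the inverse being $C^1$, I would compute $D\Phi$ and check it is invertible everywhere: $\partial_\theta\Phi = T(h)\,b(\Phi)$, which is nonzero and tangent to the level set, while $\partial_h\Phi$ has a component transverse to the level set — indeed $\nabla H(\Phi)\cdot\partial_h\Phi = \frac{\dd}{\dd h}H(\Phi(\theta,h))$, and since $H\circ X$ is conserved along the flow and $H(x(h))=h$ this equals $1\neq 0$. Hence $D\Phi$ has full rank $2$ everywhere, and the inverse function theorem gives $\Phi^{-1}\in C^1$. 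The second coordinate of $\Phi^{-1}$ is $H$ by construction ($H(\Phi(\theta,h))=h$), so writing $\Phi^{-1}=(\Psi,H)$ defines $\Psi:\Omega_0\to\mathbb{S}^1$ of class $C^1$.

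\emph{Step 3: the evolution law.} Fix $x\in\Omega_0$ and write $(\theta,h)=\Phi^{-1}(x)$, so $x=X(\theta T(h),x(h))$ and $H(x)=h$. Since the orbit through $x$ stays in $\Omega_0$ and $H$ is conserved, $X(t,x)$ also lies on the level $\{H=h\}$, and by the group property $X(t,x)=X(\theta T(h)+t,x(h))=X\big((\theta + t/T(h))T(h),x(h)\big)$, which is exactly $\Phi\big(\theta + t/T(h)\bmod 1,\,h\big)$. Applying $\Psi$ and using $H(X(t,x))=H(x)$ gives $\Psi(X(t,x)) = \theta + t/T(h) = \Psi(x) + t/T(H(x))$, as claimed (equality in $\mathbb{S}^1=\RR/\ZZ$). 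The main obstacle is Step 2: one must be careful that the Poincar\'e section $x(h)$ really meets each orbit in $\Omega_0$ \emph{exactly once}, which relies on $\Omega_0$ being a single component foliated by the closed level curves $\{H=h\}$, $h\in(h_0,h_1)$, each a simple closed curve — a consequence of $|\nabla H|\geq c_0$ on $\Omega_0$ forcing the level sets to be embedded $C^1$ circles with no critical points — and on verifying $T(h)$ is the \emph{minimal} period so that the $\mathbb{S}^1$-parametrization is genuinely injective; the rank computation for $D\Phi$ then upgrades this bijection to a $C^1$ diffeomorphism.
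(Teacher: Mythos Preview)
Your proof is correct and follows essentially the same route as the paper's: both arguments establish $T\in C^1$ from the period formula $T(h)=\int_{\{H=h\}}|b|^{-1}\,\dd\haus^1$, prove invertibility of $D\Phi$ via the identical pair of identities $\partial_\theta\Phi=T(h)\,b(\Phi)$ and $\nabla H(\Phi)\cdot\partial_h\Phi=1$, and deduce the evolution law \eqref{e:evolution_psi} from the group property of the flow. The only notable difference is that the paper makes the $C^1$-regularity of $T$ explicit via the divergence theorem and coarea formula (rewriting $T(h)-T(h_0)$ as a double integral over $\{h_0\le H\le h\}$), whereas you invoke ``differentiating under the integral sign''; since the domain $\{H=h\}$ varies with $h$, the paper's formulation is cleaner, but your claim is justifiable by the same device.
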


\begin{proof}
 We begin by proving that $T:(h_0,h_1) \to (0,\infty)$ is $C^1$.
		 It holds
		 \begin{align}
		 	T(h) = \int_{\{H=h\}{\cap \Omega_0}} \frac{1}{|b|} \dd \haus^1
		 	 & =
		 	\int_{\{h_0\le H\le h\}{\cap \Omega_0}} \nabla\cdot\left(\frac{\nabla H}{|\nabla H|^2}\right) \dd \haus^2 + T(h_0) \notag\\
		 	& =
		 	\int_{h_0}^h \int_{\{H=r\}{\cap \Omega_0}} \nabla\cdot\left(\frac{\nabla H}{|\nabla H|^2}\right) |\nabla H|^{-1} \dd\haus^1\, \dd r
		 	 + T(h_0),\label{eq:period}
		 \end{align}
	     where we used the divergence theorem and the coarea formula.
	     Using that $|\nabla H|\ge c_0$ in $\Omega$, it is now immediate to see that $T\in C^1((h_0,h_1))$.

The fact that $b\in C^1(M)$ implies that $\Phi$ is $C^1$-regular. More precisely, pointwise in $(\theta,h)\in \mathbb{S}^1 \times (h_0,h_1)$  we have
	     \begin{align}
	     	\partial_h \Phi(\theta,h)
	     	= T'(h) \nabla^\perp H (X(\theta T(h), x(h)))+ D_x X(\theta T(h),x(h))  \frac{\nabla H}{|\nabla H|^2}(x(h)) 
	     \end{align}
and
	     \begin{align}\label{eq:thetader}
	     	\partial_\theta \Phi(\theta,h)= T(h) \nabla^\perp H(\Phi(\theta,h)),
	     \end{align}
so that standard regularity estimates for flow maps of $C^1$ velocity fields imply
	     \begin{align}
	     	|\partial_h \Phi(\theta,h)| 
	     	\le
	     	|T'(h)| |b|(\Phi(\theta,h)) + \frac{\e^{T(h)\|H\|_{C^2}}}{|b|(x(h))}
	     \end{align}
and
	     \begin{align}
	     	|\partial_\theta \Phi(\theta,h)|
	     	&\le T(h) |b|(\Phi(\theta,h)).
	     \end{align}
	     It is simple to see that $\Phi$ is injective and surjective. To prove that the inverse is $C^1$ we show that $D\Phi$ is invertible at any point. Now,
	     from the identity
	     \begin{equation}
	     	H(\Phi(\theta,h)) = H(x(h)) = h,
	     \end{equation}	    
and \eqref{eq:thetader} we deduce that
	     \begin{equation}\label{z1}
	     	\begin{cases}
	     		\partial_h \Phi \cdot \nabla H(\Phi) = 1 \, ,
	     		\\
	     		\partial_\theta \Phi =  T(h)\nabla^\perp H (\Phi),
	     	\end{cases}
	     \end{equation}
	     for any $\theta \in \mathbb{S}^1$ and $h\in (h_0,h_1)$. Since $(\nabla H, \nabla^\perp H)$ is an orthogonal and non-degenerate frame, the $C^1$ property of $\Phi^{-1}$ follows. Specifically,
	     from  \eqref{z1} we find
	     \begin{equation}\label{e:Jacobian_angle-action}
	     	|\det D\Phi(\theta,h)| = T(h) ,
	     \end{equation}
	      and therefore 
	     \begin{align}
	     	|D \Phi^{-1}|(\Phi(\theta,h))
	     	 \le \frac{|D \Phi|(\theta,h)}{|\det D\Phi(\theta,h)|} \le 
	     	 \left(\frac{|T'(h)|}{T(h)} + 1 \right) |b|(\Phi(\theta,h)) + \frac{1}{T(h)} \frac{\e^{T(h)\|H\|_{C^2}}}{|b|(x(h))}\, .
	     \end{align}   
         In order to prove \eqref{e:evolution_psi}, observe that $x= \Phi(\Psi(x),H(x)) = X(\Psi(x) T(H(x)),x(H(x)))$, and therefore
	     \begin{align}
         X(t,x) = X(\Psi(x) T(H(x)) + t, x(H(x))), \qquad \forall t\geq0.
	     \end{align}  
         On the other hand
	     \begin{align}
         X(t,x) = \Phi(\Psi(X(t,x)),H(X(t,x))) =  \Phi(\Psi(X(t,x)),H(x)) = X(\Psi(X(t,x))T(H(x)),x(H(x))).
	     \end{align}  
Comparing the two expressions above and recalling that $s\to X(sT(h),h)$ is 1-periodic and injective on $[0,1)$   we obtain \eqref{e:evolution_psi}
and complete the proof.
	\end{proof}
With the above lemma at hand, and in particular the explicit formula \eqref{e:evolution_psi}, the proof of Proposition \ref{prop:keystep} follows immediately.

	\begin{proof}[Proof of Proposition \ref{prop:keystep} when $H\in C^2$]
	We denote by $\dd_M(\cdot,\cdot)$ the Riemannian distance on $M$.
	Invoking  \eqref{e:evolution_psi},
	for any $x,y\in \Omega_0$ and $t\geq 0$ it holds 
	\begin{align}
		\dd_M(X(t,x), X(t,y)) 	
		& = \dd_M \left( \left(\Phi \left(\Psi(X(t,x)) \right),H(X(t,x))\right), \Phi\left(\left(\Psi(X(t,y))\right),H(X(t,y))\right)\right) \notag\\
		& = \dd_M \left( \left(\Phi \left(\Psi(x) + \frac{t}{T(H(x))}\right),H(x)\right), \Phi\left(\left(\Psi(y) + \frac{t}{T(H(y))}\right),H(y)\right)\right) \notag\\
		& \lesssim |\Psi(x) - \Psi(y)| + t \frac{|T(H(x)) - T(H(y))|}{T(H(x))T(H(y))} + |H(x) - H(y)|\notag\\
		& \lesssim (1+t)\dd(x,y),
	\end{align}
	where we used that the period $T$ is bounded below away from zero on $(h_0,h_1)$ thanks to \eqref{eq:period}. 
	In particular, if $\rho_0\in C^1$ is supported in $\Omega$, then $\rho(t,x) = \rho_0(X(-t,x))$ and
	\begin{equation}
		\| \nabla \rho(\cdot,t) \|_{L^p} \lesssim \| \nabla \rho_0 \|_{L^\infty} (1+t) ,
	\end{equation}
	for any $p \in[1,\infty]$, thereby concluding the  proof. 
\end{proof}

\subsection{Less regular Hamiltonians}
When $H\in C^1\cap W^{2,p}(M)$ we cannot appeal to action-angle variables to study the flow map $X$ on good invariant domains. To be precise, we cannot even appeal to classical notions of flow since $b\in C^0\cap W^{1,p}(M)$ is not regular enough.
	
	In this setting, we understand $X$ as the unique \textit{regular Lagrangian flow} (RLF in short) associated to $b$ in the sense of \cite{DiPernaLions89,Ambrosio04}. 
	The latter is by definition a measurable map $X: \RR \times M \to M$ satisfying the following properties:
	\begin{itemize}
		\item[(i)] $X(t, \cdot)$ conserves the volume measure of $M$ for any $t\ge 0$;
		
		\item[(ii)] there exists a negligible set (with respect to the volume measure) $N\subset M$ such that $t\to X(t,x)$ is absolutely continuous for any $x\in M\setminus N$ and solves \eqref{e:Lagr}.
	\end{itemize}
	Under our assumptions $b\in C^0\cap W^{1,p}$, $\nabla \cdot b=0$, there exists a unique RLF associated to $b$. Uniqueness is understood in the following weak sense: if $X_1$ and $X_2$ are two RLF, then there exists a negligible set $N\subset M$ such that $X_1(t,x) = X_2(t,x)$ for any $x\in M\setminus N$ and $t\ge 0$. 
	
The crucial point in our analysis is to estimate the rate of separation of trajectories, as in the following proposition.

\begin{proposition}[Linear growth of the $H^1$ norm of the flow]\label{p:propagation}
	
	Let $b=\nabla^\perp H \in C^0\cap W^{1,p}(M)$ and $c_0>0$, $\Omega:= H^{-1}((h_0,h_1))$ be as in Lemma \ref{lemma:invariant set}.
	Then there are two constants $r,C>0$ and a function $g \in W^{1,p}_{\mathrm{loc}}(\RR)$ such that for every $\bar z\in \Omega$ and $z \in B_r(\bar z)$ and every $t>0$ it holds
	\begin{equation}
	\dd_M(X(t,z),X(t,\bar z)) \le C(1+t)\big[\dd_M(z,\bar z) + |g(H(z))-g(H(\bar z))|\big] ,
	\end{equation}
    where $X$ is a suitable representative of the unique RLF associated to $b$.
	In particular there is $C'=C'(b,c_0,p)>0$ such that
	\begin{equation}
	\| X(t,\cdot)\|_{W^{1,p}(\Omega)} \le C'(1+t),
	\end{equation}
 for every $t>0$.	
\end{proposition}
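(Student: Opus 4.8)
The plan is to reduce the less-regular case to the $C^2$ case by a careful approximation argument, transferring the action-angle estimates along the way. The main point is that everything needed in the $C^2$ proof — the explicit evolution formula \eqref{e:evolution_psi}, the lower bound on the period, and the $C^1$ bounds on $\Phi$ — degenerates only through $\|H\|_{C^2}$, which we no longer control, so we must replace the pointwise Lipschitz estimate for $X(t,\cdot)$ by a weaker one that survives passing to the limit. Concretely, let $H_n \in C^\infty(M)$ with $H_n \to H$ in $C^1 \cap W^{2,p}$, and (by the openness of the conditions in Lemma \ref{lemma:invariant set}) arrange that $|\nabla H_n| \ge c_0/2$ on a fixed subinterval, so that $\Omega_n := H_n^{-1}((h_0,h_1))$ are uniformly good invariant domains and the associated periods $T_n$ satisfy $c_0^{-1} \gtrsim T_n \gtrsim c_0$ uniformly, with $T_n' $ bounded in $L^p$ by \eqref{eq:period} applied to $H_n$. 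Let $X_n$ be the (now classical) flow of $b_n = \nabla^\perp H_n$, and $\Psi_n$ the angle coordinate from Lemma \ref{l:Phi}.

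The heart of the argument is to get, for fixed $n$, a separation estimate of the claimed form with constants independent of $n$. From Lemma \ref{l:Phi} applied to $H_n$, for $z, \bar z$ in the same component $\Omega_{0,n}$ and all $t \ge 0$,
\begin{equation}
\dd_M(X_n(t,z),X_n(t,\bar z)) \lesssim |\Psi_n(z)-\Psi_n(\bar z)| + t\,|T_n(H_n(z))^{-1} - T_n(H_n(\bar z))^{-1}| + |H_n(z)-H_n(\bar z)|.
\end{equation}
The first and last terms are $\lesssim \dd_M(z,\bar z)$ with a constant depending only on the uniform $C^1$ bound for $\Phi_n^{-1}$ restricted away from the $\|H\|_{C^2}$-term — here one must check that $|D\Psi_n|$ stays bounded using only $|b_n| \ge c_0/2$ and $\|b_n\|_{C^0}$, \emph{not} $\|H_n\|_{C^2}$; this is where the function $g$ enters. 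Writing $T_n^{-1}$ as a function of $h$, the middle term is controlled by $|\int_{H_n(\bar z)}^{H_n(z)} (T_n^{-1})'(r)\,\dd r|$; since $(T_n^{-1})' = -T_n' / T_n^2$ is bounded in $L^p(h_0,h_1)$ uniformly in $n$, we may define $g_n(h) := \int_{h_0}^h (T_n^{-1})'(r)\,\dd r$, bounded in $W^{1,p}_{\mathrm{loc}}$, so the term is $\lesssim t\,|g_n(H_n(z)) - g_n(H_n(\bar z))|$. This gives the stated inequality for $H_n$ with $n$-independent $C$.

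Finally I would pass to the limit. By the stability theory for regular Lagrangian flows (DiPerna–Lions / Ambrosio), $X_n \to X$ in the appropriate (e.g. $L^1_{\mathrm{loc}}$ in $x$, locally uniformly in $t$) sense, along a subsequence $g_n \to g$ in $C^0_{\mathrm{loc}}$ after extracting (using the uniform $W^{1,p}$ bound and Morrey/compact embedding when $p>1$; the case $p=1$ needs the bound stated in the proposition to be read in $W^{1,1}_{\mathrm{loc}}$ and one argues with a.e. convergence), and $H_n \to H$ uniformly. The separation estimate, being an a.e. pointwise inequality with uniform constants, survives the limit for a.e. pair $(z,\bar z)$, which after choosing a good representative of $X$ yields the first displayed inequality of the proposition on $B_r(\bar z) \subset \Omega$. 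Integrating its $p$-th power over $\Omega$ and using the coarea formula together with $g \in W^{1,p}$ (so $g \circ H \in W^{1,p}$ because $|\nabla(g\circ H)| = |g'(H)|\,|\nabla H|$ and $|\nabla H|$ is bounded) gives $\|X(t,\cdot)\|_{W^{1,p}(\Omega)} \lesssim (1+t)$. The main obstacle I anticipate is the first step of the limit: ensuring the angle coordinate $\Psi_n$ and its derivative are bounded independently of $\|H_n\|_{C^2}$ — the naive bound from Lemma \ref{l:Phi} contains $\e^{T_n \|H_n\|_{C^2}}$ — so one has to re-derive the estimate for $D\Psi_n$ directly from the transport identity $b_n \cdot \nabla \Psi_n = 1/T_n$ together with $|\nabla \Psi_n| \cdot |\nabla H_n|$-type orthogonality, avoiding Gronwall on the flow, or alternatively absorb the bad term into the $g(H(z))-g(H(\bar z))$ piece by a more careful decomposition of the displacement along and transverse to level sets.
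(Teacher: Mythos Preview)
Your approximation strategy has a real gap that you yourself flag but do not close. The paper does \emph{not} approximate: it imports directly from \cite{Mar21} a low-regularity estimate (Lemma~\ref{l:imported}) which says that for any $t$ there is a time $s$ with $\dd_M(X(t,\bar z),X(s,z))\le c_1|H(z)-H(\bar z)|$ and $|t-s|\le c_2\big(|g(H(z))-g(H(\bar z))|+\dd_M(z,\bar z)\big)$, where $c_1$ is $t$-independent and $c_2\lesssim 1+t$ (Remark~\ref{r:constants}); the proposition then follows from the triangle inequality and $\dd_M(X(t,z),X(s,z))\le \|b\|_{L^\infty}|t-s|$. The function $g$ is taken to be $g(h)=\int_{\{H\le h\}}|Db|$. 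The whole point of this argument is to \emph{avoid} the angle coordinate $\Psi$ and the full Lipschitz bound on $\Phi$: one never needs more than the comparison of trajectories on nearby level sets at suitably desynchronized times.

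In your approach the obstacle is fatal as written. The displayed inequality you obtain for the smooth flows $X_n$ carries a hidden prefactor $\mathrm{Lip}(\Phi_n)$ (needed to pass from differences in the $(\theta,h)$ coordinates back to $\dd_M$), and both this prefactor and the bound on $|\Psi_n(z)-\Psi_n(\bar z)|$ in terms of $\dd_M(z,\bar z)$ depend on $\|H_n\|_{C^2}$ through the factor $\e^{T_n\|H_n\|_{C^2}}$ appearing in Lemma~\ref{l:Phi}. Your choice $g_n(h)=\int_{h_0}^h (T_n^{-1})'$ only re-encodes the middle (period-drift) term and cannot absorb either of these. The suggested fix via $b_n\cdot\nabla\Psi_n=1/T_n$ plus an ``orthogonality'' does not work: $\nabla\Psi_n$ and $\nabla H_n$ are \emph{not} orthogonal (the level curves of $\Psi_n$ are the images of $h\mapsto\Phi_n(\theta,h)$, which are not integral curves of $\nabla H_n$), and the transverse component of $\nabla\Psi_n$ genuinely involves $D_xX_n$. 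Making your route work would amount to redoing the along/across-streamline decomposition with constants depending only on $c_0$ and $\|b\|_{W^{1,p}}$, which is precisely the content of \cite{Mar21} --- and there the angle $\Psi$ is replaced by the time-shift $s$, not controlled as a Lipschitz function.
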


The proof of Proposition \ref{p:propagation} follows from the work \cite{Mar21}. For the reader's convenience we outline the main steps. 
By combining \cite{Mar21}*{Lemma 3.2} and \cite{Mar21}*{Remark 3.3}, we get the following result.
	
\begin{lemma}\label{l:imported}
	Let $b= \nabla ^\perp H$, $c_0>0$ and $\Omega:= H^{-1}((h_0,h_1))$ as in Lemma \ref{lemma:invariant set}.
	Then there exist a representative of the regular Lagrangian flow $X$, a function $g \in W^{1,p}_{\rm loc} \cap C^0(\RR)$ and $r>0$ such that for every $t>0$ the following holds:
	there exist $c_1,c_2>0$ such that $\forall \bar z \in \Omega$ and every $z \in B_r(\bar z)$ there exists $s>0$ such that
	\begin{enumerate}
		\item $\dd_M(X(t,\bar z) ,X(s,z)) \le c_1 |H(\bar z) - H(z)|$ \\
		\item $ |t-s| \le c_2 \left( |g(H(\bar z)) - g(H(z))| + \dd_M(\bar z ,z)\right) $.
	\end{enumerate}
\end{lemma}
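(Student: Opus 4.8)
The plan is to synchronize the two trajectories $t\mapsto X(t,\bar z)$ and $s\mapsto X(s,z)$, exploiting that on $\Omega$ the level sets of $H$ are closed orbits and $|b|\ge c_0>0$. I would first settle (1) purely geometrically. Since $|\nabla H|\ge c_0$ on $\Omega$, a transversal displacement of length $\delta$ changes $H$ by at least $c_0\delta$, so the orbits $\{H=H(\bar z)\}$ and $\{H=H(z)\}$ lie within Hausdorff distance $c_0^{-1}|H(\bar z)-H(z)|$ of one another. Because $H$ is conserved along trajectories ($b\cdot\nabla H=\nabla^\perp H\cdot\nabla H=0$), the point $X(t,\bar z)$ stays on $\{H=H(\bar z)\}$, hence admits a point $w^{*}$ on the orbit $\{H=H(z)\}$ with $\dd_M(X(t,\bar z),w^{*})\le c_0^{-1}|H(\bar z)-H(z)|$. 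As $z$ and $w^{*}$ lie on the same periodic orbit, I would \emph{define} $s>0$ to be a flow time carrying $z$ to $w^{*}$, i.e. $X(s,z)=w^{*}$; this gives (1) with $c_1=c_0^{-1}$, with no period dependence.

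The core of the argument is then estimating $|t-s|$ for this choice of $s$. I would track progress along each orbit by a phase variable, normalized by the period so that it advances at rate $T(H(\cdot))^{-1}$ under the flow, as in \eqref{e:evolution_psi}. Matching the phase of $X(t,\bar z)$ on its orbit with that of $X(s,z)=w^{*}$ on the neighboring orbit yields an identity of the form $s=t\,T(H(z))/T(H(\bar z))+T(H(z))\,\delta\phi$, where $\delta\phi$ is the total phase discrepancy between $\bar z$ and $z$ (including the difference of base points on the two orbits and the reparametrization of $w^{*}$ relative to $X(t,\bar z)$). Consequently
\begin{equation}
|t-s|\le \frac{t}{T(H(\bar z))}\,\big|T(H(z))-T(H(\bar z))\big|+T(H(z))\,|\delta\phi|.
\end{equation}
The first term is where the period variation enters: in the low-regularity setting $T$ is only $W^{1,p}_{\mathrm{loc}}$ and \emph{not} Lipschitz, so I would introduce $g$ precisely so that $|T(a)-T(b)|\le|g(a)-g(b)|$, bounding this term by $c_2|g(H(\bar z))-g(H(z))|$ with a constant $c_2$ allowed to grow in $t$. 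The phase discrepancy $\delta\phi$ splits into an along-orbit part bounded by $c_0^{-1}\dd_M(\bar z,z)$ (uniform transversality) and an across-orbit part again governed by the non-Lipschitz $h$-dependence of the angle coordinate; enlarging $g$ to dominate this contribution as well, I would absorb it into $|g(H(\bar z))-g(H(z))|$, leaving exactly (2).

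It then remains to produce $g\in W^{1,p}_{\mathrm{loc}}\cap C^{0}(\RR)$ with the required domination, and here I would use the coarea/divergence computation behind \eqref{eq:period}: on $\Omega$ one has $T'(h)=\int_{\{H=h\}\cap\Omega_0}\nabla\cdot(\nabla H/|\nabla H|^2)\,|\nabla H|^{-1}\,\dd\haus^1$, and since $H\in W^{2,p}$ and $|\nabla H|\ge c_0$ this integrand is $L^p$ in $h$ by Fubini, so $T\in W^{1,p}_{\mathrm{loc}}$ and $g$ may be taken as a monotone primitive of an $L^p$ majorant of the period and angle derivatives. \textbf{The main obstacle is the low regularity of $b$}: because $b\in C^0\cap W^{1,p}$ only, there is no classical flow and no $C^1$ action-angle chart, so the phase variable, the winding count implicit in the identity for $s$, and the $L^p$-in-$h$ control of the period must all be constructed for a suitable representative of the regular Lagrangian flow. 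This is exactly the input I would import from \cite{Mar21}*{Lemma 3.2} and \cite{Mar21}*{Remark 3.3}, whose combination supplies the good representative of $X$, the function $g$, and the radius $r$, thereby making the synchronization rigorous and completing the proof.
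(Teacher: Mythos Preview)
Your proposal is correct and matches the paper's treatment: the paper does not prove this lemma independently but states that it follows by combining \cite{Mar21}*{Lemma 3.2} and \cite{Mar21}*{Remark 3.3}, and in a subsequent remark offers exactly the action-angle interpretation you sketch---choosing $s$ so that the trajectory from $z$ runs the same number of periods and reaches the same angular variable as $X(t,\bar z)$. The only minor discrepancy is the value of $c_1$ (the paper records $c_1=2(c_0^{-1}+1)$ from \cite{Mar21}, not $c_0^{-1}$), but this does not affect the structure of the argument.
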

	
\begin{remark}\label{r:constants}
	The constants $c_1,c_2$ are explicitly chosen at the end of the proof of \cite{Mar21}*{Lemma 3.2} as
	$$
	c_1 = 2(c_0^{-1}+1), \qquad c_2 = \tilde N(c_0^{-1}+1)^2 (1+2 \|b\|_{L^\infty}) + 2(c_0^{-1}+1),
	$$
	where $\tilde N = \left\lceil \frac{t\|b\|_{L^\infty}}{\bar r} \right\rceil$ and $\bar r >0$ is the size of a suitable covering of $\Omega$ depending only on $c_0$ and $H$.
	In particular $c_1$ is independent on $t$ and there is $\tilde c=\tilde c(H,c_0)>0$ such that $c_2 \le \tilde c (1+t)$.
\end{remark}

\begin{remark}
	The function $g$ can be chosen as 
	\begin{equation*}
		g(h) = \int_{\{H\le h\}}|Db|(z)\dd z,
	\end{equation*}
	in particular if $b \in W^{1,p}(\Omega)$, then $g \in W^{1,p}_{\mathrm{loc}}(\RR)$, see \cite{Mar21}.
\end{remark}

\begin{remark}
	The proof of Lemma \ref{l:imported} (see \cite{Mar21}) can be interpreted in terms of the action-angle variables in the smooth setting: the time $s$ in the statement of Lemma \ref{l:imported} can be chosen as the time needed by the trajectory starting at $z$ to run across the same number of periods as the trajectory starting at $\bar z$ at time $t$ and reach the same angular variable of the point $X(t,\bar z)$.
\end{remark}
	
\begin{proof}[Proof of Proposition \ref{p:propagation}]
	The statement immediately follows from Lemma \ref{l:imported} and Remark \ref{r:constants} by estimating
	\begin{align*}
	\dd_M(X(t,z), X(t,\bar z)) &\le \dd_M(X(t,z),X(s, z)) + \dd_M(X(s,z), X(t, \bar z)) \\
	&\le \|b\|_{L^\infty}  |t-s| +  \dd_M(X(s,z) , X(t, \bar z))
	\end{align*}
	and using that $H$ is Lipschitz.
\end{proof}

\begin{proof}[Proof of Proposition \ref{prop:keystep} in the general case]

We appeal to the following fundamental property of RLF \cite{DiPernaLions89}:
\begin{equation}
	\rho(t,x) = \rho_0(X(-t, x)) \, , \quad
	\text{for a.e. $x\in M$}\, .
\end{equation}
In particular, the regularity of $X$ proved in Proposition \ref{p:propagation} immediately implies Proposition \ref{prop:keystep}.
\end{proof}

\section{Mixing, dissipation, and the role of elliptic points}
This section is dedicated to the proofs of the main results of this paper. These concern the general bound on the dissipation rate of Theorem \ref{thm:main},
a sharp treatment of the possible elliptic points in $H$ (cf. Theorem \ref{thm:elliptic}), and a detailed analysis of the mixing properties of a standard cellular flow as in Theorem \ref{thm:cell}.

\subsection{General upper bounds on the dissipation rate}
First we notice that to prove the upper bound \eqref{eq:upperrnu}, it is enough to exhibit an initial datum $\rho_0$ for which the corresponding solution $\rho^\nu$ of  \eqref{e:adv-diff} 
cannot diffuse at a time-scale faster than $O(\nu^{-1/3})$. For this purpose, we will choose $\rho_0\in C^1(M)$ supported in the good invariant set $\Omega$ of Proposition \ref{prop:keystep}.
The idea is to  turn \eqref{eq:reg} into a quantitative vanishing viscosity bound, as stated in the result below.

\begin{lemma}\label{lem:vanish}
	 Let $\nu\in[0,1)$ and  $H\in C^1\cap W^{2,p}(M)$ for some $p\geq 2$.
	 For any  $\rho_0 \in C^1(M)$ with $\spt (\rho_0) \subset \Omega$, there holds the estimate
	\begin{equation}
		\| \rho^\nu(t) - \rho(t) \|_{L^2}^2 \lesssim \nu (1+t)^3 \|\nabla \rho_0\|_{L^\infty}^2  ,
		\qquad \forall t\geq 0,
	\end{equation}
	where $\rho^\nu$, $\rho$ solve \eqref{e:adv-diff} and \eqref{e:transport}, respectively, with the same initial datum $\rho_0$.	
\end{lemma}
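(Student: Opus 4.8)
The plan is to estimate the difference $w:=\rho^\nu-\rho$ by a straightforward energy argument, using the linear-in-time gradient bound of Proposition \ref{prop:keystep} to control the error term. Subtracting \eqref{e:transport} from \eqref{e:adv-diff} gives
\begin{equation*}
\partial_t w + b\cdot\nabla w = \nu\Delta w + \nu\Delta\rho,
\end{equation*}
with $w(0,\cdot)=0$ and the same boundary conditions. I would first multiply by $w$ and integrate over $M$: since $\nabla\cdot b=0$ and $b$ is tangent to $\partial M$ (and $\partial_n w=0$), the transport term vanishes and integration by parts yields
\begin{equation*}
\frac12\ddt\|w\|_{L^2}^2 = -\nu\|\nabla w\|_{L^2}^2 - \nu\int_M \nabla\rho\cdot\nabla w \,\dd\haus^2.
\end{equation*}
Applying Young's inequality to the last term, $\nu\int \nabla\rho\cdot\nabla w \le \nu\|\nabla w\|_{L^2}^2 + \frac{\nu}{4}\|\nabla\rho\|_{L^2}^2$, the dissipative term absorbs the first piece and we are left with
\begin{equation*}
\ddt\|w(t)\|_{L^2}^2 \le \tfrac{\nu}{2}\|\nabla\rho(t)\|_{L^2}^2.
\end{equation*}

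Next I would insert the gradient bound. Since $\spt(\rho_0)\subset\Omega$, Proposition \ref{prop:keystep} (applied with $p=2$, and using $M$ compact so $L^\infty\subset L^2$ on the relevant set, or directly the $L^2$ statement) gives $\|\nabla\rho(t)\|_{L^2}\le C(\Omega,H)(1+t)\|\nabla\rho_0\|_{L^\infty}$. Plugging this in and integrating in time from $0$ to $t$, using $w(0)=0$,
\begin{equation*}
\|w(t)\|_{L^2}^2 \le \tfrac{\nu}{2}\int_0^t C(\Omega,H)^2(1+s)^2\|\nabla\rho_0\|_{L^\infty}^2\,\dd s \lesssim \nu(1+t)^3\|\nabla\rho_0\|_{L^\infty}^2,
\end{equation*}
which is exactly the claimed estimate.

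There is essentially no serious obstacle here; the argument is the classical "relative energy / vanishing viscosity" estimate, and the only real input is that the inviscid gradient grows at most linearly, which is precisely Proposition \ref{prop:keystep}. A couple of minor points deserve care: one must justify that $w$ is regular enough to perform the integration by parts (this follows from parabolic smoothing for $\rho^\nu$ and from $\rho(t,\cdot)=\rho_0(X(-t,\cdot))$ being Lipschitz with the linear-in-time bound for $\rho$, so the computation can be done on smooth approximations and passed to the limit), and one should note that the boundary terms vanish thanks to the homogeneous Neumann condition on $\rho^\nu$ and the fact that $H$ is constant on $\partial M$, so that $\spt(\rho_0)\subset\Omega$ with $\Omega\cap\partial M=\emptyset$ keeps $\rho(t,\cdot)$ supported away from the boundary and $\nabla\rho$ has no boundary contribution. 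The case $\nu=0$ is trivial since then $w\equiv 0$.
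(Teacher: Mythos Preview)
Your proof is correct and essentially identical to the paper's: both test the difference equation against $\rho^\nu-\rho$, use incompressibility to drop the transport term, apply Cauchy--Schwarz/Young so that the viscous dissipation absorbs everything except $\nu\|\nabla\rho(t)\|_{L^2}^2$, and then invoke Proposition~\ref{prop:keystep} (together with $L^p\hookrightarrow L^2$ on the compact $M$ since $p\ge 2$) before integrating in time. The only cosmetic difference is that the paper writes the right-hand side of the difference equation as $\nu\Delta\rho^\nu$ rather than splitting it as $\nu\Delta w+\nu\Delta\rho$, which leads to the same inequality after integration by parts.
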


\begin{proof}
We take the difference between \eqref{e:adv-diff} and \eqref{e:transport} to obtain
  \begin{equation}\label{e:difference} 
  	\partial_t(\rho^\nu - \rho) + b\cdot \nabla(\rho^\nu - \rho) = \nu \Delta \rho^\nu.
  \end{equation}
  Testing \eqref{e:difference} with $\rho^\nu-\rho$, integrating on $M$, and using the antisymmetry of the transport term, we get
  \begin{align}\label{e:Elg_trick}
  		\ddt \|\rho^\nu  - \rho\|^2_{L^2} 
  		=   - 2\nu \int_M  \nabla \rho^\nu \cdot (\nabla\rho^\nu - \nabla \rho) \dd x 
  		\le    - 2\nu\|\nabla \rho^\nu \|_{L^2} ^2 + 2\nu\|\nabla \rho^\nu \|_{L^2} \|\nabla \rho \|_{L^2}  
  		\le  \nu\|\nabla \rho \|_{L^2}^2.
  \end{align}
 By H\"older inequality and Proposition \ref{prop:keystep} with $p\geq 2$,  we have
  \begin{equation}
  \nu  \|\nabla \rho (t) \|_{L^2}^2
  \lesssim   \nu  \|\nabla \rho(t) \|_{L^p}^2
  \lesssim  \nu  (1+t)^2 \|\nabla \rho_0\|^2_{L^\infty}  .
  \end{equation}
  Plugging this into \eqref{e:Elg_trick} and integrating in time, we obtain
  \begin{equation}
  \|\rho^\nu (t) - \rho(t)\|^2_{L^2} \lesssim \nu  \|\nabla \rho_0\|^2_{L^\infty}    \int_0^t (1+ s)^2 \dd s,
  \end{equation}
  and the proof is over.
\end{proof}
With the above proximity estimate at end, the proof of Theorem \ref{thm:main} follows from a suitable lower bound on the energy dissipation rate.

\begin{proof}[Proof of Theorem \ref{thm:main}]
We take $\rho_0$ as in the above Lemma \ref{lem:vanish}, and we assume that the corresponding solution $\rho^\nu$ to  \eqref{e:adv-diff} experiences
enhanced dissipation at rate $\lambda(\nu)$. According to Definition \ref{def:diff-enhancing}, this implies that
\begin{equation}\label{eq:mainproof1}
	\left(1-A\e^{- \lambda(\nu) t}\right) \norm{\rho_0}_{L^2} \le
	\norm{\rho_0}_{L^2}
	-\norm{\rho^\nu(t)}_{L^2}
	=\norm{\rho(t)}_{L^2}
	-\norm{\rho^\nu(t)}_{L^2}
	  \lesssim \sqrt{\nu (1+t)^3} \|\nabla \rho_0\|_{L^\infty},
\end{equation}
or equivalently
\begin{equation}\label{eq:mainproof2}
	\left(1-C(\Omega,H)\frac{\|\nabla \rho_0\|_{L^\infty}}{\norm{\rho_0}_{L^2}} \sqrt{\nu (1+t)^3}  \right) 
	  \leq A\e^{- \lambda(\nu) t} , \qquad \forall t\geq0.
\end{equation}
Fix now $\eps_0>0$  so that 
\begin{equation}\label{eq:eps0}
\eps_0:= \frac{1}{2} \left[\frac{\| \rho_0\|_{L^2}}{C(\Omega,H)\norm{\nabla\rho_0}_{L^\infty}} \right]^{2/3}
\end{equation}
and $\nu_0\in (0, \eps_0^3)$. Taking $t=\eps_0\nu^{-1/3}\geq 1$ in \eqref{eq:mainproof2}, we end up with
\begin{equation}\label{eq:mainproof3}
	  \frac12\leq A\e^{- \eps_0\lambda(\nu)\nu^{-1/3}} ,  \qquad \forall \nu\in (0,\nu_0),
\end{equation}
which readily implies the bound \eqref{eq:upperrnu} and concludes the proof.
\end{proof}

\subsection{Bounds on mixing and enhanced dissipation rates near elliptic points}

Let $H$, $p=(0,0)$, $r_0$, $\ell_r$ as in Section \ref{subsec:elliptic}.
The first lemma crucially associates  the behavior of $T$ near $r=0$ with the 
growth of the flow map of $b=\nabla^\perp H$.

\begin{lemma}\label{lem:growthellipt}
Let $H\in C^3(M)$ and $(0,0)$ as above. Then there exists a constant $C_1=C_1(H)>0$ such that
\begin{align}\label{eq:growthellipt}
|\nabla X(t,x)|\leq 1+C_1 r |T'(r)| \frac{t}{T(r)},
\end{align} 
for every $r\in (0,r_0)$, $t\geq 1$, and $x\in \ell_r$.
\end{lemma}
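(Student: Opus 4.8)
The statement is essentially the local (near-elliptic-point) analogue of the computation carried out in the proof of Proposition~\ref{prop:keystep}, so the plan is to set up action-angle coordinates on the annular region foliated by the closed orbits $\ell_r$, $r\in(0,r_0)$, and to differentiate the explicit formula for the angle evolution. Concretely, since $T(0)>0$ and $b\neq 0$ on $\ell_r$ for $r\in(0,r_0)$ (the elliptic point being isolated and non-degenerate), Lemma~\ref{l:Phi} applies with the interval $(h_0,h_1)$ replaced by $(H(0),H(r_0 e_1))$: there is a $C^1$ diffeomorphism $\Phi(\theta,h)=X(\theta T(h),x(h))$ from $\SS^1\times(H(0),H(r_0e_1))$ onto the punctured neighborhood, with $C^1$ inverse $\Phi^{-1}(x)=(\Psi(x),H(x))$ satisfying the transport identity $\Psi(X(t,x))=\Psi(x)+t/T(H(x))$. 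The reason $H\in C^3$ is assumed (rather than $C^2$) is that we now need quantitative control of the second derivatives of $\Phi$ and $\Psi$ in order to differentiate the composition below, since $|\nabla X(t,x)|$ involves $D\Phi$, $D\Phi^{-1}$ and $D(t/T(H(\cdot)))$.

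\textbf{Key steps.} First I would parametrize the orbit $\ell_r$ by $x\mapsto(\Psi(x),h)$ with $h=H(re_1)$ fixed, and write, for $x\in\ell_r$,
\begin{equation}
X(t,x)=\Phi\!\left(\Psi(x)+\frac{t}{T(h)},\,h\right),
\end{equation}
valid because $H$ is conserved along the flow. Differentiating in $x$ along directions tangent to $\ell_r$ and transverse to it (equivalently, using the chain rule through $\Phi$, $\Psi$, $H$), the tangential part contributes $\partial_\theta\Phi\cdot\partial_x\Psi$, which is bounded uniformly (it is just the rescaled velocity $T(h)\nabla^\perp H$ times the bounded $D\Psi$), while the transverse part picks up the term $\partial_h\Phi$ together with the derivative of $t/T(H(x))$ in $x$, namely $-t\,T'(h)\,\nabla H(x)/T(h)^2$. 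Collecting, one gets a bound of the shape
\begin{equation}
|\nabla X(t,x)|\le C\big(|D\Phi|\,|D\Phi^{-1}|\big)\left(1+t\,\frac{|T'(h)|}{T(h)^2}\,|\nabla H(x)|\right).
\end{equation}
The second step is to translate $h$-dependence back to $r$-dependence: since $H(re_1)\sim r^2$ one has $\dd h\sim r\,\dd r$, so $T'(h)=T'(r)/\tfrac{\dd h}{\dd r}\sim T'(r)/r$; and on $\ell_r$ one has $|\nabla H|=|b|\lesssim r$ (because $b(0)=0$ and $b$ is Lipschitz, $H$ being $C^2$), so $t\,|T'(h)|\,|\nabla H|/T(h)^2 \sim t\,(|T'(r)|/r)\cdot r / T(r)^2 \sim t|T'(r)|/T(r)^2$. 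Wait --- this produces $|T'(r)|/T(r)^2$ rather than the claimed $r|T'(r)|/T(r)$; the discrepancy is reconciled by being careful about which $|\nabla H|$ factor appears where: in the correct bookkeeping the $\partial_h\Phi$ prefactor carries a factor $|b|(\Phi)\sim r$ (cf. the estimate for $|\partial_h\Phi|$ in Lemma~\ref{l:Phi}), $T(r)$ is bounded above and below by positive constants on $(0,r_0)$, and the net surviving small factor is exactly one power of $r$ multiplying $|T'(r)|\,t/T(r)$. The third step is simply to observe that $|D\Phi|\,|D\Phi^{-1}|$ is bounded uniformly on $(0,r_0)$ (using $T\in C^1$, $T$ bounded below, and the explicit bounds from Lemma~\ref{l:Phi}), and that the additive constant $1$ comes from the tangential/shear direction, yielding \eqref{eq:growthellipt} with a constant $C_1=C_1(H)$.

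\textbf{Main obstacle.} The delicate point is the bookkeeping of the powers of $r$ near the degenerate boundary $r\to 0$: the maps $\Phi$ and $\Phi^{-1}$ have derivatives that blow up (the coordinate change becomes singular at the elliptic point, roughly like polar coordinates), so one must check that these blow-ups cancel and leave precisely the factor $r|T'(r)|/T(r)$, not something larger or smaller. Concretely this requires the estimates $|\nabla H|\sim r$, $\partial_h\Phi\sim$ (bounded) with the $|b|(\Phi)\sim r$ factor, and $|D\Phi^{-1}|\sim r^{-1}$ on $\ell_r$, so that the product $|D\Phi^{-1}(\Phi)|\cdot|\partial_h\Phi|\cdot t|T'(h)||\nabla H| \sim r^{-1}\cdot r \cdot t(|T'(r)|/r)\cdot r = t|T'(r)|$, i.e.\ the claimed estimate up to the bounded factor $1/T(r)$ and the constant. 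This is where the non-degeneracy of the elliptic point ($\nabla^2H(0)>0$) is used, through $H(re_1)\sim r^2$ and $T(0)>0$; the $C^3$ regularity guarantees that the linearization errors in these asymptotics are genuinely lower order and uniform on $(0,r_0)$. Everything else is a routine application of Lemma~\ref{l:Phi} together with the chain rule.
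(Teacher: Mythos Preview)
Your approach via action-angle coordinates is different from the paper's, which bypasses the change of variables entirely. The paper argues by a direct finite-difference using periodicity: for integers $k$ and small $\delta>0$, since $X(kT(r),re_1)=re_1$ and $X(kT(r+\delta),(r+\delta)e_1)=(r+\delta)e_1$, one has
\[
|X(kT(r),re_1)-X(kT(r),(r+\delta)e_1)|\le \delta + \|b\|_{L^\infty(\ell_{r+\delta})}\,k\,|T(r+\delta)-T(r)|,
\]
and dividing by $\delta$, using $\|b\|_{L^\infty(\ell_r)}\lesssim r$ and $k\sim t/T(r)$, yields the bound in the $e_1$-direction immediately; the tangential direction is controlled by $\nabla X\cdot b(x)=b(X)$. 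This two-line argument sidesteps any analysis of the singular behavior of $\Phi$, $\Phi^{-1}$ as $r\to0$.

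Your route can be made to work, but the bookkeeping as written has a real gap. The claim in your ``third step'' that $|D\Phi|\,|D\Phi^{-1}|$ is uniformly bounded on $(0,r_0)$ is false: from the formula in Lemma~\ref{l:Phi} the second summand in $\partial_h\Phi$ dominates and gives $|\partial_h\Phi|\sim 1/|b|(x(h))\sim 1/r$, while also $|D\Psi|\sim 1/r$, so the crude product is $\sim r^{-2}$ and the displayed bound $C(|D\Phi||D\Phi^{-1}|)(1+\dots)$ is useless. What actually saves the argument is that the chain rule produces a \emph{sum} of three specific terms --- $\partial_\theta\Phi\cdot D\Psi$, $\partial_\theta\Phi\cdot(-tT'(h)/T(h)^2)\,DH$, and $\partial_h\Phi\cdot DH$ --- and in each the singular factors pair up: $|\partial_\theta\Phi|\sim T(h)\,r$ against $|D\Psi|\sim 1/r$; the middle term yields exactly $r|T'(r)|\,t/T(r)$ after converting $T'(h)\sim T'(r)/r$; and $|\partial_h\Phi|\sim 1/r$ against $|DH|\sim r$. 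Your ``Main obstacle'' paragraph mislabels which factors multiply (the product $|D\Phi^{-1}|\cdot|\partial_h\Phi|\cdot t|T'(h)||\nabla H|$ never arises from the chain rule), but once the pairing is done correctly the estimate \eqref{eq:growthellipt} follows. The paper's finite-difference argument is preferable precisely because it never has to track these cancellations.
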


\begin{proof}
We preliminary observe that it is enough to estimate the growth of $|\nabla X(t,re_1)|$, as the same estimate will hold
for any point on $\ell_r$. We are interested in the range $t\gg T(r)\sim 1$. 

Let $k\in \mathbb{N}$, and $\delta>0$, we have
\begin{align}
|X(kT(r),re_1)-X(kT(r),(r+\delta)e_1)|
&\leq |X(kT(r),re_1)-X(kT(r+\delta),(r+\delta)e_1)|\\
&\quad+|X(kT(r+\delta),(r+\delta)e_1)-X(kT(r),(r+\delta)e_1)|\\
&\leq \delta + \|b\|_{L^\infty(\ell_r)}k |T(r+\delta)-T(r)|.
\end{align} 
Noting that $\|b\|_{L^\infty(\ell_r)}\lesssim r$, dividing by $\delta$ and sending $\delta\to0$ we find
\begin{align}
|\nabla  X(kT(r),re_1)\cdot e_1|\leq 1 + C_1rk|T'(r)|,
\end{align} 
by choosing $k \sim t/T(r)$ we deduce
\begin{align}
	|\nabla X(t,x) \cdot e_1|\leq 1+C_2 r |T'(r)| \frac{t}{T(r)},
\end{align}
for every $r\in (0,r_0)$, $t\geq 1$, and $x\in \ell_r$.
Here we used that 
$$
| \nabla X(t + T(r),x)\cdot e_1| \le |\nabla X(t,X(T(r),x))\cdot e_1| \e^{CT(r)}
$$
for any $t\in \mathbb{R}$ and that $T(r)\le C$, provided $r\le r_0$.

To estimate $|\nabla X(t,x)|$ it is enough to observe that $re_1$ is almost normal to $\ell_r$ at any $x\in \ell_r$, provided $r\le r_0$ is small enough. On the other hand, the tangential derivative of $X(t,x)$ along $\ell_r$ is bounded by $\|b\|_{L^\infty(\ell_r)}\lesssim r$. This concludes the proof.

\end{proof}

Next we show that an estimate of type \eqref{eq:growthellipt} along with $T'(r)\sim r^\beta$ implies sharper bounds on the mixing rate and
bounds on the enhanced dissipation time-scale for the Hamiltonians considered. 

\begin{lemma}\label{lem:mixdiffelliptic2}
	Let $H\in C^3(M)$ and $(0,0)$ as above. Assume further that the period satisfies
	\begin{align}\label{eq:periodexpansion}
		 T'(s)\sim  s^\beta, \qquad \text{as}\quad s\to 0
	\end{align} 
	for some $\beta \ge 0$.
	Then the corresponding  enhanced dissipation rate of $b$ has the upper upper bound
	\begin{align}\label{eq:endiffelliptic}
		\lambda(\nu)\leq C_2\nu^\frac{1+\beta}{3+\beta},
	\end{align}
	for some constant $C_2=C_2(H)>0$.
\end{lemma}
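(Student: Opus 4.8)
The plan is to mimic the argument behind Theorem \ref{thm:main} (via Lemma \ref{lem:vanish}), but with the linear-growth estimate \eqref{eq:reg} replaced by the sharper, $r$-dependent bound \eqref{eq:growthellipt} from Lemma \ref{lem:growthellipt}. First I would fix a small $r>0$ (to be optimized against $\nu$ at the end) and choose an initial datum $\rho_0 \in C^1(M)$ supported in a thin annular neighborhood of the orbit $\ell_r$, say in $\{r/2 < |x| < 2r\} \cap \{|x_2| \text{ small}\}$, normalized so that $\|\rho_0\|_{L^2} \sim 1$ and $\|\nabla \rho_0\|_{L^\infty} \lesssim 1/r$ (one loses a factor $r^{-1}$ because the support has width $\sim r$). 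Since the closed orbits near the elliptic point foliate an invariant neighborhood, this support is invariant under the flow, so the inviscid solution $\rho(t,x) = \rho_0(X(-t,x))$ stays supported near $\ell_r$ and $\|\rho(t)\|_{L^2} = \|\rho_0\|_{L^2}$ is conserved.

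Next I would run the energy argument of Lemma \ref{lem:vanish}: from \eqref{e:Elg_trick}, $\ddt \|\rho^\nu - \rho\|_{L^2}^2 \le \nu \|\nabla \rho(t)\|_{L^2}^2$, and now $\|\nabla \rho(t)\|_{L^2} \lesssim \|\nabla X(-t,\cdot)\|_{L^\infty(\mathrm{spt}\,\rho)} \|\nabla\rho_0\|_{L^2}$. Using \eqref{eq:growthellipt} with $T'(r)\sim r^\beta$ and $T(r)\sim 1$ (recall $T(0)>0$), the flow gradient on $\ell_r$ is bounded by $1 + C r^{1+\beta} t$, so $\|\nabla\rho(t)\|_{L^2}^2 \lesssim (1 + r^{1+\beta}t)^2 \|\nabla\rho_0\|_{L^2}^2$. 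Integrating in time and using $\|\nabla\rho_0\|_{L^2}^2 \lesssim r^{-2}$ gives
\begin{equation*}
\|\rho^\nu(t) - \rho(t)\|_{L^2}^2 \lesssim \frac{\nu}{r^2}\left(t + r^{2(1+\beta)} t^3\right).
\end{equation*}
For $t \gtrsim r^{-(1+\beta)}$ the cubic term dominates, yielding $\|\rho^\nu(t) - \rho(t)\|_{L^2}^2 \lesssim \nu r^{2\beta} t^3$. Then, exactly as in the proof of Theorem \ref{thm:main}, if $b$ is dissipation enhancing at rate $\lambda(\nu)$ we get $(1 - A e^{-\lambda(\nu)t}) \|\rho_0\|_{L^2} \lesssim \sqrt{\nu r^{2\beta} t^3}$; choosing $t = \eps_0\, (\nu r^{2\beta})^{-1/3}$ with $\eps_0$ small (and checking $t \gtrsim r^{-(1+\beta)}$, which forces a constraint $r^{3+\beta} \gtrsim \nu$, compatible with the final choice) forces $\tfrac12 \le A e^{-\eps_0 \lambda(\nu)(\nu r^{2\beta})^{-1/3}}$, i.e. $\lambda(\nu) \lesssim (\nu r^{2\beta})^{1/3} r^{-?}$ — more precisely $\lambda(\nu) \lesssim \nu^{1/3} r^{2\beta/3}$. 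Finally I would optimize in $r$: the constraint is that the argument requires $t\sim r^{-(1+\beta)}$ be the relevant time, equivalently $r \sim \nu^{1/(3+\beta)}$ (balancing the time scale against the diffusive one), which upon substitution gives $\lambda(\nu) \lesssim \nu^{1/3} \nu^{2\beta/(3(3+\beta))} = \nu^{(3+\beta + 2\beta)/(3(3+\beta))}$; a short check shows this exponent equals $\frac{1+\beta}{3+\beta}$ after correctly tracking the $r^{-1}$ from $\|\nabla\rho_0\|_{L^\infty}$, giving \eqref{eq:endiffelliptic}.

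The main obstacle is the bookkeeping of the $r$-dependence: the factor $r^{-1}$ lost from $\|\nabla\rho_0\|_{L^\infty}$ (or $\|\nabla\rho_0\|_{L^2}$) when concentrating on a thin annulus must be balanced carefully against the gain $r^{1+\beta}$ in the flow-gradient growth rate, and one must verify that the chosen time $t$ is simultaneously (a) in the regime $t\ge 1$ where \eqref{eq:growthellipt} applies, (b) large enough that the cubic term dominates the linear term, and (c) of the right order for the enhanced-dissipation contradiction to bite. Getting the exponent $\frac{1+\beta}{3+\beta}$ rather than something slightly off hinges entirely on choosing $r \sim \nu^{1/(3+\beta)}$ and checking these three conditions are consistent; everything else is a direct transcription of the proof of Theorem \ref{thm:main}.
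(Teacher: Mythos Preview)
Your proposal is correct and follows essentially the same approach as the paper: choose an initial datum concentrated in an annulus of scale $r$ near the elliptic point, use Lemma~\ref{lem:growthellipt} to bound $\int_0^t\|\nabla\rho(s)\|_{L^2}^2\,\dd s \lesssim t/r^2 + r^{2\beta}t^3$, feed this into the energy identity \eqref{e:Elg_trick}, and optimize in $r$ and $t$. The only cosmetic difference is the order of optimization: the paper balances the two terms in $r$ first (taking $r\sim t^{-1/(1+\beta)}$, which yields the clean bound $\nu\int_0^t\|\nabla\rho\|_{L^2}^2\lesssim \nu(1+t)^{(3+\beta)/(1+\beta)}$) and then chooses $t\sim \nu^{-(1+\beta)/(3+\beta)}$, whereas you fix $t$ from the cubic term and then saturate the constraint $r\gtrsim\nu^{1/(3+\beta)}$ coming from the linear term---both routes give the same exponent.
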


\begin{proof}
Fix $r\in (0,r_0)$ and consider a smooth and mean-free initial datum $\rho_{0,r}$ supported in an invariant region contained in the annulus 
$B_{ar}(0)\setminus B_r(0)$, where $a>1$ is a constant depending only on $H$. Accordingly, we normalize the initial datum so that  
$\|\rho_{0,r}\|_{L^2} =1$, implying $\|\nabla \rho_{0,r}\|_{L^2} \sim 1/r$, and denote by $\rho^\nu$, $\rho$ the corresponding solutions to 
\eqref{e:adv-diff} and \eqref{e:transport}, respectively, with the same initial datum $\rho_{0,r}$.

By \eqref{eq:periodexpansion} and using that the Lagrangian flow preserves the Lebesgue measure, we can estimate
\begin{align}
  \int_0^t \|\nabla \rho(s) \|^2_{L^2} \dd s  \le   \|\nabla \rho_{0,r}\|_{L^2}^2\int_0^t \sup_{B_{ar}(0) \setminus B_r(0)} |\nabla X(s,\cdot)|^2 \dd s \lesssim     (t/r^2 + r^{2\beta} t^3),
\end{align}
for every $t\geq 1$.
An optimization in $r$ then leads to the integral bound
\begin{align}
\nu \int_0^t \|\nabla \rho(s) \|^2_{L^2} \dd s\lesssim \nu (1+ t) ^\frac{3+\beta}{1+\beta}, \qquad \forall t\geq0.
\end{align}
Now, thanks to \eqref{e:Elg_trick}, the above integral controls the difference between $\rho^\nu$ and $\rho$ in $L^2$. Thus, as we did for 
 \eqref{eq:mainproof1}, we assume that $\rho^\nu$ is enhanced dissipated at rate $\lambda(\nu)$ and obtain the inequality
\begin{equation}\label{eq:mainproof1elliptic}
	1- C(H)\sqrt{  \nu (1+ t) ^\frac{3+\beta}{1+\beta}} 
	  \leq A\e^{- \lambda(\nu) t}  , \qquad \forall t\geq 0.
\end{equation}
Choosing
\begin{equation}\label{eq:eps0elliptic}
\eps_0:= \frac{1}{2} \left[\frac{1}{2C(\Omega,H) } \right]^{2\frac{1+\beta}{3+\beta}},\qquad 0<\nu_0<   \eps_0^\frac{3+\beta}{1+\beta}, \qquad t=\eps_0\nu^{-\frac{1+\beta}{3+\beta}}\geq 1
\end{equation}
eventually leads to
\begin{equation}\label{eq:mainproof3elliptic}
	  \frac12\leq A\e^{- \eps_0\lambda(\nu)\nu^{-\frac{1+\beta}{3+\beta}}} ,  \qquad \forall \nu\in (0,\nu_0),
\end{equation}
which readily implies the bound \eqref{eq:endiffelliptic} and concludes the proof.
\end{proof}

\begin{remark}
In a similar fashion, we can obtain a  lower bound  on the mixing rate for such $b$.
Indeed, let $\rho_0 =\rho_{0,r}$ exactly as in the proof of Lemma \ref{lem:mixdiffelliptic2}. If $b$ is mixing with rate $\gamma(t)$ as in Definition \ref{def:mixing rate}, then we have
\begin{equation}
	\| \rho(t) \|_{H^{-1}} \gtrsim   \frac{\|\rho(t)\|_{L^2}^2}{\| \rho(t)\|_{H^1}}
	\gtrsim \|\rho_{0}\|_{H^1} \frac{1}{1/r^2 + r^{\beta - 1}t} \, .
\end{equation}
Hence $\gamma(t) \gtrsim \frac{1}{1/r^2 + r^{\beta - 1}t}$ for every $r>0$.
We choose $r= t^{-\frac{1}{\beta + 1}}$ and find the lower bound $\gamma(t)\gtrsim \frac{1}{t^{\frac{2}{\beta + 1}}}$.
\end{remark}

\begin{proof}[Proof of Theorem \ref{thm:elliptic}]
	It is enough to apply Lemma \ref{lem:mixdiffelliptic2} and the above remark.	
\end{proof}

\subsection{On the behavior of $T'$ near the elliptic point}
We conclude this section by showing that for every smooth Hamiltonian, $T'$ is bounded near zero (so $\beta\geq 0$ in \eqref{eq:periodexpansion}).
\begin{lemma}
Let $H \in C^3(M)$ and $(0,0)$ be as above. Then $|T'(s)| \lesssim 1$ as $s \to 0$. 
\end{lemma}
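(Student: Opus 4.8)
The plan is to set up normal coordinates adapted to the elliptic point and expand the period $T$ as an integral over the level sets, then differentiate under the integral sign. Since $H\in C^3$ with a non-degenerate minimum at $p=(0,0)$, by the Morse lemma (in its $C^1$ version applied to the $C^3$ function, or simply by a linear change of coordinates diagonalizing $\nabla^2 H(0)$) we may assume $H(x) = \tfrac12(\mu_1 x_1^2 + \mu_2 x_2^2) + R(x)$ with $\mu_1,\mu_2>0$ and $R\in C^3$ vanishing to third order at the origin. It is cleaner still to introduce symplectic polar-type coordinates in which the level set $\{H=h\}$ for small $h>0$ is a perturbation of an ellipse of ``size'' $\sqrt h$. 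The key formula is the one already derived in the proof of Lemma~\ref{l:Phi}:
\begin{equation}
T(h) = \oint_{\{H=h\}} \frac{\dd\haus^1}{|\nabla H|} ,
\end{equation}
and, via the divergence theorem / coarea formula,
\begin{equation}
T'(h) = \int_{\{H=h\}} \nabla\cdot\!\left(\frac{\nabla H}{|\nabla H|^2}\right) \frac{\dd\haus^1}{|\nabla H|} .
\end{equation}
Here $s=\sqrt{h}$ up to a constant, so bounding $T'(s)$ as $s\to 0$ amounts to bounding $\sqrt h\, T'(h)$ as $h\to 0$, i.e.\ showing $T'(h) = O(h^{-1/2})$.

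First I would rescale. On $\{H=h\}$ write $x = \sqrt h\, y$; then $y$ ranges over a curve $\Gamma_h$ converging in $C^1$ (indeed $C^2$, using $H\in C^3$) to the fixed ellipse $\Gamma_0 = \{\tfrac12(\mu_1 y_1^2 + \mu_2 y_2^2) = 1\}$ as $h\to 0$. Under this scaling, $\nabla H(x) = \sqrt h\, \widetilde\nabla(y)$ where $\widetilde\nabla(y) := (\mu_1 y_1, \mu_2 y_2) + O(\sqrt h)$ in $C^2$, bounded below away from zero on $\Gamma_h$ for $h$ small (this uses non-degeneracy, exactly as in the remark before Theorem~\ref{thm:elliptic} that gives $T(0)>0$). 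The integrand $\nabla\cdot(\nabla H/|\nabla H|^2)$ scales like $h^{-1}$ times a quantity bounded uniformly in $h$ (it involves $D^2 H$, which is $C^1$ and hence uniformly bounded near $0$, divided by $|\nabla H|^2 \sim h$, all evaluated at $x=\sqrt h y$), the arclength element $\dd\haus^1$ on $\{H=h\}$ scales like $\sqrt h\,\dd\haus^1$ on $\Gamma_h$, and the extra $1/|\nabla H|$ contributes another $h^{-1/2}$. Collecting the powers: $T'(h) \sim h^{-1}\cdot h^{1/2}\cdot h^{-1/2} \cdot \big(\text{bounded integral over }\Gamma_h\big) = h^{-1}\cdot(\text{bounded})$. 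This is one power of $h$ too weak, so the naive estimate only gives $T'(s)\lesssim s^{-1}$, not $T'(s)\lesssim 1$.

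Hence the real content is a \emph{cancellation}: the leading-order (elliptic, $h=0$) contribution to $\sqrt h\, T'(h)$ must vanish. The cleanest way to see this is to note that for the \emph{exactly quadratic} Hamiltonian $H_0(x) = \tfrac12(\mu_1 x_1^2+\mu_2 x_2^2)$ the period is constant, $T_0(h) = 2\pi/\sqrt{\mu_1\mu_2}$, so $T_0'(h)\equiv 0$; and the map $H\mapsto T'(h)$, written through the integral formula above after the rescaling, depends on $H$ only through its $2$-jet at $0$ to leading order in $h$, with the $O(\sqrt h)$ corrections coming from the third-order Taylor remainder $R$. Concretely, I would write $T'(h) = \big[T'(h)\big]_{\text{quadratic part}} + \big[\text{remainder}\big]$, observe the first bracket is $0$ identically, and estimate the remainder: every term in it carries at least one factor of the $C^1$-small quantity $\sqrt h\,(\text{derivatives of }R(\sqrt h y))$ coming from $R$, which supplies the missing $\sqrt h$, yielding $T'(h) = O(h^{-1/2})$, i.e.\ $T'(s) = O(1)$. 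The main obstacle is making this cancellation rigorous with only $C^3$ regularity: one must differentiate the level-set integral in $h$ and control the remainder uniformly, which is most safely done by passing to action-angle-type variables near $p$ — where the action $I(h)$ satisfies $T(h) = 2\pi I'(h)$ and $I(h)$ is a $C^2$ function of $\sqrt h$ with $I(h) = h/\sqrt{\mu_1\mu_2} + O(h^{3/2})$ — and then the bound $T'(h) = 2\pi I''(h) = O(h^{-1/2})$ follows from the $C^2$-in-$\sqrt h$ regularity of $I$. I would therefore structure the proof around establishing that regularity of the action variable, which is the genuinely delicate step and where the $C^3$ hypothesis on $H$ is used.
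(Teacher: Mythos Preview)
Your identification of the key mechanism is correct and matches the paper: the naive bound on the level-set integral for $\bar T'(h)$ is one power of $h$ short, and the missing factor comes from the cancellation against the purely quadratic Hamiltonian $\tilde H(x)=H(0)+\tfrac12 x^\top\nabla^2H(0)x$, whose period is constant so that the corresponding integral vanishes identically. The paper carries this out concretely by writing $\int_{\{H=s^2\}}f\,\dd\haus^1=\int_{\{H=s^2\}}f\,\dd\haus^1-\int_{\{\tilde H=s^2\}}\tilde f\,\dd\haus^1$, splitting into $\int_{\{H=s^2\}}(f-\tilde f)$ plus the change-of-curve error, and controlling the latter via the radial map $g(x)=t(x)x$ between the two level sets together with pointwise estimates $|f-\tilde f|\lesssim s^{-2}$, $|\nabla\tilde f|\lesssim s^{-4}$, $|g-\mathrm{Id}|$ small, etc. Your rescaling outline in the paragraph beginning ``Concretely, I would write\ldots'' is essentially this argument in different coordinates and would work once fleshed out.

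However, your preferred closing route via the action variable has a genuine gap. From $T(h)=2\pi I'(h)$ and $J(u):=I(u^2)$ one computes
\[
T'(h)=2\pi I''(h)=\frac{\pi}{2}\left(\frac{J''(\sqrt h)}{h}-\frac{J'(\sqrt h)}{h^{3/2}}\right).
\]
With only $J\in C^2$ and $J'(0)=0$ you get $J'(u)=J''(0)u+o(u)$ and $J''(u)=J''(0)+o(1)$; the two displayed terms then cancel only to $o(h^{-1})$, not $O(h^{-1/2})$. To reach $O(h^{-1/2})$ you need $J\in C^{2,1}$ (or $C^3$), i.e.\ one more derivative than you claim, and it is not clear this follows from $H\in C^3$: the Morse chart for a $C^3$ function is generically only $C^1$, so the regularity of $I$ in $\sqrt h$ is precisely the delicate point, and the regularity you assert is insufficient. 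The paper sidesteps this by staying with the integral formula and estimating the subtraction directly, which is exactly your approach (a); drop the action-variable reformulation and carry out (a).
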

\begin{proof}
We have $T(s)= \bar T (H(s e_1))$, where $\bar T(h) =  \int_{\{H=h\}} \frac{1}{|b|} \dd \haus^1$. In particular  
\[ |T'(s)| = |\bar T'(H(se_1)) \frac{\dd}{\dd s}H(se_1)| \sim  |\bar T'(H(se_1))| s,\] therefore it follows from \eqref{eq:period} that it is sufficient to prove 
\begin{equation}\label{e:claim_T'}
\left| \int_{\{H=s^2\}} \nabla\cdot\left(\frac{\nabla H}{|\nabla H|^2}\right) |\nabla H|^{-1} \dd\haus^1\right|\lesssim s^{-1}.
\end{equation}
Observe that a trivial estimate on the size of the integrand would give a useless final upper bound of size $s^{-2}$.
We compare $H$ with the quadratic Hamiltonian $\tilde H (x) = H(0) + x^\perp D^2H(0) x$ to obtain a cancellation at the first non-trivial order: observe that the period of the trajectories associated to $\tilde H$ is constant, in particular 
\[
 \int_{\{\tilde H=s^2\}} \nabla\cdot\left(\frac{\nabla \tilde H}{|\nabla \tilde H|^2}\right) |\nabla \tilde H|^{-1} \dd\haus^1 = 0.
\]
For shortness we denote by $f =  \nabla\cdot\left(\frac{\nabla H}{|\nabla H|^2}\right) |\nabla H|^{-1}$ and $\tilde f =  \nabla\cdot\left(\frac{\nabla \tilde H}{|\nabla \tilde H|^2}\right) |\nabla \tilde H|^{-1}$.
It is straightforward to check that for every $x \in \{H = s^2\} \cup \{\tilde H= s^2\}$ it holds
\[
|f(x)| \lesssim s^{-3},\quad  |\nabla f(x)| \lesssim s^{-4}, \quad  |\tilde f(x)| \lesssim s^{-3}, \quad |\nabla \tilde f(x)| \lesssim s^{-4}.
\]
Let $\tilde \Omega$ be a sufficiently small neighborhood of $(0,0)$ and $g: \tilde \Omega \to \mathbb{R}^2$ be the map $g(x) = tx$ where $t=t(x)$ is the unique value $t>0$ such that $\tilde H(tx) =  H(x)$.

Let $g:\{H = s^2\} \to \{\tilde H = s^2\}$ be the map $g(x) = tx$ where $t=t(s,x)$ is the unique value $t>0$ such that $g(x) \in  \{\tilde H = s^2\}$.
By construction we have 
\[ 
|g- \mathrm{Id}| \lesssim s^3, \quad  |g^{-1}- \mathrm{Id}| \lesssim s^3, \quad |dg- \mathrm{Id}| \lesssim s, \quad  |dg^{-1}- \mathrm{Id}| \lesssim s \quad \mbox{as }s \to 0.
\]
In particular it holds
\begin{align*}
\left| \int_{\{H=s^2\}}f \dd \haus^1 \right|  =&    \left| \int_{\{H=s^2\}}f \dd \haus^1 - \int_{\{\tilde H=s^2\}}\tilde f \dd \haus^1 \right| \\ 
\le &   \int_{\{H=s^2\}}|f-\tilde f| \dd \haus^1 + \left|  \int_{\{ H=s^2\}}\tilde f \dd \haus^1  -  \int_{\{\tilde  H=s^2\}}\tilde f \dd \haus^1 \right|.
\end{align*}
Since on $\{H= s^2\}$ we have the estimates 
\[
|H - \tilde H| \lesssim s^3, \quad |\nabla H - \nabla \tilde H| \lesssim s^2, \quad |\nabla^2H - \nabla^2 \tilde H| \lesssim s, \quad |\nabla H|, |\nabla \tilde H| \sim s
\]
then $|f - \tilde f| \lesssim s^{-2}$. Moreover $\haus^1(\{H = s^2\}) \sim s$, therefore we can estimate 
\[
\int_{\{H=s^2\}}|f-\tilde f| \dd \haus^1\lesssim s^{-1}.
\]
Now we consider
\begin{align*}
 \left|  \int_{\{ H=s^2\}}\tilde f \dd \haus^1  -  \int_{\{\tilde  H=s^2\}}\tilde f \dd \haus^1 \right|  = &  \left|  \int_{\{ H=s^2\}}\tilde f \dd \haus^1  -  \int_{\{H=s^2\}}\tilde f \circ g^{-1} \dd g_\sharp\haus^1\vert_{\{\tilde H=s^2\}}  \right| \\
 \le &   \int_{\{ H=s^2\}}| \tilde f - \tilde f\circ g^{-1}| \dd \haus^1 \\
 & + \left|  \int_{\{ H=s^2\}}\tilde f \circ g^{-1}\dd \haus^1  -  \int_{\{H=s^2\}}\tilde f \circ g^{-1} \dd g_\sharp\haus^1\vert_{\{\tilde H=s^2\}}  \right|.
\end{align*}
Since $|\nabla \tilde f| \lesssim s^{-4}$ and $ |g^{-1}- \mathrm{Id}| \lesssim s^3$, we can estimate
\[
 \int_{\{ H=s^2\}}| \tilde f - \tilde f\circ g^{-1}| \dd \haus^1\lesssim 1.
\]
Moreover, by the estimates on $dg$ we obtain that $g_\sharp \haus^1\vert_{\{\tilde H=s^2\}} = \psi  \haus^1\vert_{\{ H=s^2\}}$ with $|\psi - 1| \lesssim s$.
Since $|\tilde f \circ g^{-1}|\lesssim s^{-3}$, then we deduce that 
\[
 \left|  \int_{\{ H=s^2\}}\tilde f \circ g^{-1}\dd \haus^1  -  \int_{\{H=s^2\}}\tilde f \circ g^{-1} \dd g_\sharp\haus^1\vert_{\{\tilde H=s^2\}}  \right|  \lesssim s^{-1}.
\]
Combining the previous estimates we obtain \eqref{e:claim_T'} and therefore the claim.
\end{proof}

\section{The case of a cellular flow}

We consider the $2\pi$-periodic Hamiltonian $H_\mathsf{c}(x)=H_\mathsf{c}(x_1, x_2) = \sin x_1 \, \sin x_2$ and the associated velocity field
\begin{equation}
	b_\sfc(x) = b_\sfc(x_1,x_2) = (-\sin x_1\, \cos x_2, \cos x_1\, \sin x_2)\, .
\end{equation}
We restrict our analysis to the domain $[0,\pi]^2$ and observe that $H_\mathsf{c}\ge 0$ on $[0,\pi]$, $H_\mathsf{c}=0$ at $\partial [0,\pi]^2$ and $(\pi/2,\pi/2)$ is the only maximum point.
For any $h\in (0,1)$ we denote by $T(h)$ the period of the closed trajectory supported in $\{H_\mathsf{c}=h\}\cap [0,\pi]^2$.

\begin{lemma}
	For any $h\in (0,1)$ we have
	\begin{equation}
		2h(1-h) \le |b_\sfc(x)|^2 \le 2(1-h^2) \, ,
		\quad 
		\text{for any $x\in \{H_\mathsf{c}=h\}\cap [0,\pi]^2$}\, .
	\end{equation}
\end{lemma}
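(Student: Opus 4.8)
The plan is to compute $|b_\sfc|^2$ explicitly and reduce the whole statement to an elementary inequality for $\sin^2 x_1 + \sin^2 x_2$. First I would write, directly from the definition of $b_\sfc$,
\[
|b_\sfc(x)|^2 = \sin^2 x_1 \cos^2 x_2 + \cos^2 x_1 \sin^2 x_2,
\]
and then use $\cos^2 = 1 - \sin^2$ to rewrite the right-hand side as $\sin^2 x_1 + \sin^2 x_2 - 2\sin^2 x_1 \sin^2 x_2$. Since $x \in \{H_\sfc = h\}$ we have $\sin^2 x_1 \sin^2 x_2 = H_\sfc(x)^2 = h^2$, so the identity becomes
\[
|b_\sfc(x)|^2 = \sin^2 x_1 + \sin^2 x_2 - 2h^2 .
\]

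From here both bounds are immediate. For the upper bound I would simply use $\sin^2 x_1 + \sin^2 x_2 \le 2$, which gives $|b_\sfc(x)|^2 \le 2 - 2h^2 = 2(1-h^2)$. For the lower bound I would apply the AM--GM inequality $\sin^2 x_1 + \sin^2 x_2 \ge 2|\sin x_1|\,|\sin x_2|$; since $x_1, x_2 \in [0,\pi]$ both sines are nonnegative, so $|\sin x_1|\,|\sin x_2| = \sin x_1 \sin x_2 = H_\sfc(x) = h$, and therefore $|b_\sfc(x)|^2 \ge 2h - 2h^2 = 2h(1-h)$.

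There is no real obstacle here: the only point requiring a little care is the identification $|\sin x_1|\,|\sin x_2| = h$, which uses the restriction to $[0,\pi]^2$ (where $\sin \ge 0$) together with $h > 0$. I note in passing that optimizing $u + h^2/u$ over $u = \sin^2 x_1 \in [h^2,1]$ would in fact yield the sharper upper bound $|b_\sfc(x)|^2 \le 1 - h^2$, but the stated bound $2(1-h^2)$ suffices for what follows.
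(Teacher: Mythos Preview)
Your proof is correct and essentially identical to the paper's: both compute $|b_\sfc(x)|^2 = \sin^2 x_1 + \sin^2 x_2 - 2h^2$, use $\sin^2 x_1 + \sin^2 x_2 \le 2$ for the upper bound, and for the lower bound the paper writes $\sin^2 x_1 + \sin^2 x_2 = 2h + (\sin x_1 - \sin x_2)^2 \ge 2h$, which is just your AM--GM step in equivalent form. Your aside on the sharper upper bound $1-h^2$ is also correct.
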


\begin{proof}
	Fix $x\in \{H_\mathsf{c}=h\}\cap [0,\pi]^2$, and compute
	\begin{equation}
		|b_\sfc(x)|^2 = - 2 (\sin x_1\, \sin x_2)^2 + (\sin x_1)^2 + (\sin x_2)^2
		= -2 h^2 + (\sin x_1)^2 + (\sin x_2)^2.
	\end{equation}
	This yields
	\begin{equation}
		-2 h^2 + (\sin x_1)^2 + (\sin x_2)^2 \le -2 h^2 + 2 = 2(1-h^2) ,
	\end{equation}
	and
	\begin{equation}
		-2 h^2 + (\sin x_1)^2 + (\sin x_2)^2 = -2 h^2 + 2h + (\sin x_1 - \sin x_2)^2 \ge 2h(1-h),
	\end{equation}
	as we wanted.
\end{proof}

\subsection{Estimates on $T(h)$} 
We provide an explicit formula for $T(h)$ and we show that $T(h)\sim1+ \ln(1/h) $ up to the second order.

\begin{lemma}
	For any $h\in (0,1)$ we have
	\begin{equation}\label{eq:per cellular}
		T(h) = 4\int_h^1 \frac{1}{\sqrt{x^2-h^2}} \frac{1}{\sqrt{1-x^2}} \dd x 
		=
		4
		\int_0^1 \frac{1}{\sqrt{1-x^2}} \frac{1}{\sqrt{1-(1-h^2)x^2}} \dd x 
		\, .
	\end{equation}
\end{lemma}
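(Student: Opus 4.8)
The plan is to derive the period formula by integrating along the closed orbit $\{H_\mathsf{c}=h\}\cap[0,\pi]^2$ using symmetry and an explicit parametrization. First I would observe that by the symmetries $x_1\mapsto \pi-x_1$ and $x_2\mapsto \pi-x_2$ the orbit $\{\sin x_1\sin x_2 = h\}$ in $[0,\pi]^2$ decomposes into four congruent arcs, so it suffices to compute the time spent in one arc, say the one in $[0,\pi/2]^2$, and multiply by $4$. On this quadrant, $\sin$ is increasing, so we may use $H_\mathsf{c}=h$ to solve for one variable in terms of the other: along the orbit, $\sin x_2 = h/\sin x_1$, which traces out the arc as $x_1$ ranges over the interval where $h/\sin x_1 \le 1$, i.e. $\sin x_1 \in [h,1]$.

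Next I would set up the arclength-type integral from the general formula $T(h)=\int_{\{H_\mathsf{c}=h\}}\frac{1}{|b_\sfc|}\,\dd\haus^1$ (as used earlier in \eqref{eq:period}), or more directly parametrize the orbit by time and change variables. Using $\dd t = \dd x_1/\dot x_1$ with $\dot x_1 = -\partial_{x_2}H_\mathsf{c} = -\sin x_1\cos x_2$, and substituting $\cos x_2 = \sqrt{1-h^2/\sin^2 x_1}$ on the relevant arc, one gets $\dot x_1 = -\sin x_1\sqrt{1-h^2/\sin^2 x_1} = -\sqrt{\sin^2 x_1 - h^2}$ up to sign. Hence the time along the quarter-orbit is $\int \frac{\dd x_1}{\sqrt{\sin^2 x_1 - h^2}}$ over the appropriate range of $x_1$; then I substitute $x = \sin x_1$, so $\dd x = \cos x_1\,\dd x_1 = \sqrt{1-x^2}\,\dd x_1$, converting the integral to $\int_h^1 \frac{1}{\sqrt{x^2-h^2}}\frac{1}{\sqrt{1-x^2}}\,\dd x$. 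Multiplying by $4$ gives the first equality in \eqref{eq:per cellular}.

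For the second equality I would perform the substitution $x = \sqrt{h^2 + (1-h^2)y^2}$ (equivalently $y^2 = \frac{x^2-h^2}{1-h^2}$), which maps $x\in[h,1]$ to $y\in[0,1]$; a direct computation of $\dd x$ and of the two square-root factors in terms of $y$ turns the first integral into $4\int_0^1 \frac{1}{\sqrt{1-y^2}}\frac{1}{\sqrt{1-(1-h^2)y^2}}\,\dd y$, which is the claimed complete-elliptic-integral form.

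The only real subtlety, and the step I would be most careful about, is correctly handling the orientation and sign of $\dot x_1$ on each of the four symmetric arcs (on two of them $x_1$ is increasing along the flow, on two it is decreasing) so that all four contributions come out with the correct positive sign and combine to exactly $4$ times the quarter-orbit integral; checking that the integrand is genuinely integrable near the endpoints $x=h$ (where $\sqrt{x^2-h^2}$ vanishes, an integrable $1/\sqrt{\phantom{x}}$-type singularity) and $x=1$ (similarly) confirms $T(h)$ is finite for $h\in(0,1)$. Everything else is a routine change of variables.
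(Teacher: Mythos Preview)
Your derivation of the first equality is essentially the paper's: the paper parametrizes the quarter-arc as a graph $t\mapsto(t,\gamma(t))$ and uses the arclength-over-speed formula $T(h)=4\int\frac{\sqrt{1+|\gamma'|^2}}{|b_\sfc|}\,\dd t$, while you use $\dd t=\dd x_1/\dot x_1$; both collapse to the same integral $4\int_{\arcsin h}^{\pi/2}\frac{\dd x_1}{\sqrt{\sin^2 x_1-h^2}}$ followed by the same substitution $x=\sin x_1$.

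For the second equality your proposed substitution has a computational slip. With $x^2=h^2+(1-h^2)y^2$ one finds
\[
\sqrt{x^2-h^2}=\sqrt{1-h^2}\,y,\qquad \sqrt{1-x^2}=\sqrt{1-h^2}\,\sqrt{1-y^2},\qquad \dd x=\frac{(1-h^2)y}{x}\,\dd y,
\]
so the integral becomes
\[
\int_0^1\frac{\dd y}{\sqrt{1-y^2}\,\sqrt{h^2+(1-h^2)y^2}},
\]
not the claimed $\int_0^1\frac{\dd y}{\sqrt{1-y^2}\,\sqrt{1-(1-h^2)y^2}}$. The two integrals are in fact equal (the further change $z=\sqrt{1-y^2}$ converts one into the other), so your argument can be repaired, but as written the ``direct computation'' does not land on the stated integrand. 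The paper avoids this by using the substitution $y=\sqrt{(1-x^2)/(1-h^2)}$ instead, which yields the desired form in a single step.
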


\begin{proof}
	We parametrize a quarter of the closed curve $\{H_\mathsf{c}=h\}\cap [0,\pi]^2$ with a curve $[\arcsin h, \pi/2]\ni t\to (t, \gamma(t))$, where $\sin t\, \sin \gamma(t) = h$.
	It is not hard to check that
	\begin{equation}\label{z3}
		|\gamma'(t)|^2 + 1 = \frac{h^2 - 2h^2\sin^2 t + \sin^4 t}{\sin^2 t (\sin^2 t - h^2)}\, .
	\end{equation}
	Moreover, from the identity $|b_\sfc(x_1,x_2)|^2 = \sin^2 x_1 \, \cos^2 x_2 + \cos^2 x_1\, \sin^2 x_2$ we get
	\begin{equation}\label{z4}
		|b_\sfc(t,\gamma(t))|^2 = \frac{h^2 - 2h^2\sin^2 t + \sin^4 t}{\sin^2 t}\, .
	\end{equation}
	By combining \eqref{z3} and \eqref{z4} we can compute the period
	\begin{equation}
		T(h) 
		= 4\int_{\arcsin h}^{\pi/2} \frac{\sqrt{|\gamma'(t)|^2 + 1}}{|b_\sfc(t,\gamma(t))|}  \dd t
		= 4\int_{\arcsin h}^{\pi/2} \frac{1}{\sqrt{\sin^2 t - h^2}}  \dd t,
	\end{equation}
	and the first identity in \eqref{eq:per cellular} follows from the change of variables $x = \sin t$.
	To prove the second identity in \eqref{eq:per cellular} we employ the change of variables $y = \sqrt{\frac{1-x^2}{1-h^2}}$.
\end{proof}

\begin{lemma}\label{l:estimates_T}
	The function $T$ is smooth in $(0,1)$, and there exists $C>1$ such 
	\begin{align}
		\frac{T(h)}{1 + \ln(1/h)} + h |T'(h)| + h^2 |T''(h)|
		& \le C, \label{est1 on T}
		\\
		-T'(h) &\ge \frac{1}{C h} , \label{est2 on T}
	\end{align}
	for any $h\in (0,1)$.
\end{lemma}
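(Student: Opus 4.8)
The plan is to work throughout with the second representation of the period in \eqref{eq:per cellular}, namely $T(h) = 4\int_0^1 (1-x^2)^{-1/2}(1-(1-h^2)x^2)^{-1/2}\,\dd x$, together with the elementary identity $1-(1-h^2)x^2 = (1-x^2)+h^2x^2$; the upshot is that everything in the statement reduces to the single family of scalar integrals
\[
	J_m(h) := \int_0^1 \frac{\dd x}{\sqrt{1-x^2}\,\big((1-x^2)+h^2x^2\big)^{m/2}}, \qquad m\ge 1 .
\]
First I would record smoothness and the differentiated formulas. For $h$ in a compact subinterval $[a,b]\subset (0,1)$ one has $(1-x^2)+h^2x^2 \ge h^2 \ge a^2$ uniformly for $x\in[0,1]$, so we may differentiate under the integral sign arbitrarily often: each $h$-derivative brings down a factor $x^2$ and lowers the power of $(1-x^2)+h^2x^2$ by one (bounded on $[a,b]$), against the $x$-integrable weight $(1-x^2)^{-1/2}$. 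This gives $T\in C^\infty((0,1))$ and
\begin{align*}
	T'(h) &= -4h \int_0^1 \frac{x^2\,\dd x}{\sqrt{1-x^2}\,\big((1-x^2)+h^2x^2\big)^{3/2}},\\
	T''(h) &= -4\int_0^1 \frac{x^2\,\dd x}{\sqrt{1-x^2}\,\big((1-x^2)+h^2x^2\big)^{3/2}} + 12h^2\int_0^1 \frac{x^4\,\dd x}{\sqrt{1-x^2}\,\big((1-x^2)+h^2x^2\big)^{5/2}},
\end{align*}
so that, using $x^{2k}\le 1$, we have $T(h)=4J_1(h)$, $h|T'(h)|\le 4h^2 J_3(h)$ and $h^2|T''(h)|\le 4h^2 J_3(h)+12h^4 J_5(h)$.

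The key scalar estimate is: for each $m\ge 1$ there is $C_m$ with $J_1(h)\le C_1(1+\ln(1/h))$ and $J_m(h)\le C_m h^{1-m}$ for $m>1$, uniformly in $h\in(0,1)$. To prove it, split $\int_0^1=\int_0^{1/2}+\int_{1/2}^1$. On $[0,1/2]$ one has $(1-x^2)+h^2x^2\ge 3/4$, so that piece is bounded by a constant. On $[1/2,1]$ use $1-x\le 1-x^2\le 2(1-x)$ and $\tfrac14 h^2\le h^2x^2\le h^2$, so $(1-x^2)+h^2x^2\ge\tfrac14((1-x)+h^2)$ and $\sqrt{1-x^2}\ge\sqrt{1-x}$; the substitution $s=1-x$ bounds that piece by $C\int_0^{1/2} s^{-1/2}(s+h^2)^{-m/2}\,\dd s$. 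For $m=1$ this integral equals $2[\ln(\sqrt s+\sqrt{s+h^2})]_0^{1/2}\le C+2\ln(1/h)$; for $m>1$ the rescaling $s=h^2v$ turns it into $h^{1-m}\int_0^{1/(2h^2)} v^{-1/2}(1+v)^{-m/2}\,\dd v\le h^{1-m}\int_0^\infty v^{-1/2}(1+v)^{-m/2}\,\dd v$, a convergent improper integral since $\tfrac{m+1}{2}>1$.

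Plugging the cases $m=1,3,5$ into the bounds of the first paragraph (and using $h\le 1$) gives $T(h)\lesssim 1+\ln(1/h)$, $h|T'(h)|\lesssim h^2\cdot h^{-2}=1$, and $h^2|T''(h)|\lesssim h^2 h^{-2}+h^4 h^{-4}\lesssim 1$, which is \eqref{est1 on T}. For the lower bound \eqref{est2 on T}, I would discard the $[0,1/2]$ part of $-T'(h)$ and, on $[1/2,1]$, use $x^2\ge\tfrac14$, $1-x^2\le 2(1-x)$ and $(1-x^2)+h^2x^2\le 2((1-x)+h^2)$, so that after $s=1-x$ and then $s=h^2v$,
\[
	-T'(h) \;\ge\; c\,h\int_0^{1/2}\frac{\dd s}{\sqrt s\,(s+h^2)^{3/2}} \;=\; \frac{c}{h}\int_0^{1/(2h^2)}\frac{\dd v}{\sqrt v\,(1+v)^{3/2}} \;\ge\; \frac{c}{h}\int_0^{2}\frac{\dd v}{\sqrt v\,(1+v)^{3/2}} \;\ge\; \frac{1}{Ch}
\]
for $h\le\tfrac12$ (for such $h$, $1/(2h^2)\ge 2$). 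For $h\in[\tfrac12,1)$ the integral formula shows $-T'$ extends continuously to $[\tfrac12,1]$ with $-T'(1)=4\int_0^1 x^2(1-x^2)^{-1/2}\,\dd x=\pi>0$, so $-T'$ is bounded below there by a positive constant; enlarging $C$ yields \eqref{est2 on T} on all of $(0,1)$, and the same continuity/boundedness (with $1+\ln(1/h)\ge 1$) completes \eqref{est1 on T} on $[\tfrac12,1)$.

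The only genuinely delicate point is isolating the behaviour near $x=1$, where the integrable weight $(1-x^2)^{-1/2}$ meets a denominator that degenerates on $\{H_\mathsf{c}=h\}$ like $(1-x)+h^2$; one must capture the borderline rescaling $1-x\sim h^2 v$. This is precisely what produces the logarithm in the bound for $T$ itself (the case $m=1$, where $\int^\infty v^{-1/2}(1+v)^{-1/2}\,\dd v$ diverges and is cut off at $v\sim h^{-2}$) versus the clean powers $h^{-2},h^{-4}$ for $T',T''$; once the integrals are organized as $J_m(h)$, the matching lower bound for $-T'$ is obtained by running the same inequalities in the opposite direction.
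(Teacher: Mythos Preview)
Your argument is correct. The approach, however, is organized quite differently from the paper's. The paper treats $T$, $T'$, and $T''$ by separate ad hoc computations: for $T$ it works with the first integral representation in \eqref{eq:per cellular} and splits into the regimes $h\in(0,\tfrac12)$ and $h\in(\tfrac12,1)$; for $T'$ it differentiates the second representation and splits the $x$-integral at the $h$-dependent point $x=1-h^2$ into two pieces $I+II$; and for $T''$ it passes through the auxiliary function $G(s)$ defined by $T(h)=G(1-h^2)$, reducing to the already-obtained bound on $hT'(h)$ plus an estimate on $h^4G''(1-h^2)$ handled like $T'$. Your route is more unified: you recast everything in terms of a single family $J_m(h)$, split once at the fixed point $x=\tfrac12$, and capture the boundary layer near $x=1$ by the rescaling $1-x=h^2v$, which yields the logarithm for $m=1$ and the clean powers $h^{1-m}$ for $m>1$ in one stroke. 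This buys a cleaner bookkeeping and makes the mechanism (a layer of width $\sim h^2$) more transparent; the paper's $h$-dependent splitting point $1-h^2$ encodes the same scale but requires redoing the estimate for each order of differentiation. Both arguments obtain the lower bound on $-T'$ by the same continuity-and-compactness argument near $h=1$, with $-T'(1)=\pi$.
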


\begin{proof}
	From the second identity in \eqref{eq:per cellular} it is immediate to verify that $T\in C^\infty((0,1))$.
	Let us estimate $T(h)$. We first consider the case $h\in (1/2,1)$:
	\begin{align}
		T(h) & \le 10 \int_h^1  \frac{1}{\sqrt{x-h}\sqrt{1-x}} \dd x
		\\& = 10  \int_h^{\frac{h+1}{2}}  \frac{1}{\sqrt{x-h}\sqrt{1-x}} \dd x
		+ 10\int_{\frac{h+1}{2}}^1 \frac{1}{\sqrt{x-h}\sqrt{1-x}} \dd x
		\\& \le \frac{10\sqrt{2}}{\sqrt{1-h}}  \int_h^{\frac{h+1}{2}}  \frac{1}{\sqrt{x-h}} \dd x
		+
		\frac{10\sqrt{2}}{\sqrt{1 - h  }}\int_{\frac{h+1}{2}}^1 \frac{1}{\sqrt{1-x}} \dd x
		\\& = 40 .
	\end{align}
	Let us now assume that $h\in (0,1/2)$, we have
	\begin{align}
		T(h) & \le 4 \int_h^1 \frac{1}{\sqrt{x^2 - h^2}} \dd x
		= 4 \int_1^{1/h} \frac{1}{\sqrt{z^2 - 1}}\dd z
		\\& = 4 \int_1^{2} \frac{1}{\sqrt{z^2 - 1}}\dd z+ 4\int_2^{1/h} \frac{1}{\sqrt{z^2 - 1}} \dd z
		\\& \le 10 + 10 \ln(1/h)\, ,
	\end{align}
	where we used the change of variables $z=x/h$.
	
	To study $T'(h)$, we differentiate the second identity in \eqref{eq:per cellular} and obtain
	\begin{equation}
		-T'(h) = 4 \int_0^1 \frac{1}{(1-x^2)^{1/2}}\, \frac{h x^2}{(1-(1-h^2)x^2)^{3/2}} \dd x . 
	\end{equation}
	We can split the integral into
	\begin{align}
		- h T'(h)  & 
		= 4 \int_0^{1-h^2} \frac{1}{(1-x^2)^{1/2}}\, \frac{h^2 x^2}{(1-(1-h^2)x^2)^{3/2}} \dd x\notag
		\\ & \quad 
		+4 \int_{1-h^2}^1 \frac{1}{(1-x^2)^{1/2}}\, \frac{h^2 x^2}{(1-(1-h^2)x^2)^{3/2}} \dd x\notag
		\\&
		=: I + II \, ,
	\end{align}
	and estimate
	\begin{align}
		I &\le \frac{4h^2(1-h^2)}{(1-(1-h^2)^2)^{1/2}} \int_0^{1-h^2} \frac{x}{(1-(1-h^2)x^2)^{3/2}} \dd x\notag
		\\& 
		\le 4 h \int_0^{1-h^2} \frac{\dd}{\dd x} (1-(1-h^2)x^2)^{-1/2} \dd x \notag
		\\& \le 4.
	\end{align}
	From
	\begin{equation}
		\frac{4 h^2(1-h^2)}{(1-(1-h^2)^3)^{3/2}} \int_{1-h^2}^1 \frac{x}{(1-x^2)^{1/2}}\dd x 
		\le II
		\le 
		\frac{4}{h}
		\int_{1-h^2}^1 \frac{x}{(1-x^2)^{1/2}}\dd x
	\end{equation}
	we deduce that
	\begin{equation}
		1-h \le II \le 8.
	\end{equation}
	Since $I\geq 0$, $T'$ is continuous in $h=1$ and $-T'(1)=\pi$ we also deduce \eqref{est2 on T}. 
	
	To conclude the proof we have to show the upper bound on $h^2|T''(h)|$. Let us introduce the auxilary function
	\begin{equation}
		G(s) := 	4
		\int_0^1 \frac{1}{\sqrt{1-x^2}} \frac{1}{\sqrt{1-sx^2}}\, \dd x\qquad
		s\in (0,1) .
	\end{equation}
	Observe that $T(h) = G(1-h^2)$, hence
	\begin{equation}
		h^2 T''(h) = -2 h^2 G'(1-h^2) + 4 h^4 G''(1-h^2)
		= h T'(h) + 4 h^4 G''(1-h^2)\, .
	\end{equation}
	We already know that $h|T'(h)| \le 12$. To estimate the second term we use the identity
	\begin{align}
		h^4 |G''(1-h^2)| 
		& = 3 \int_0^1 \frac{1}{(1-x^2)^{1/2}}\, \frac{h^4 x^4}{(1-(1-h^2)x^2)^{\frac{5}{2}}} \dd x \notag
		\\& 
		= 3 \int_0^{1-h^2} \frac{1}{(1-x^2)^{1/2}}\, \frac{h^4 x^4}{(1-(1-h^2)x^2)^{\frac{5}{2}}} \dd x\notag
		\\ &\quad  + 3 \int_{1-h^2}^1 \frac{1}{(1-x^2)^{1/2}}\, \frac{h^4 x^4}{(1-(1-h^2)x^2)^{\frac{5}{2}}} \dd x\notag
		\\& =: I + II
	\end{align}
	and estimate $I$ and $II$ exactly as we did for $-hT'(h)$.
\end{proof}

\begin{remark}
The expression for $T(h)$ in \eqref{eq:per cellular} is in fact a complete elliptic integral of the first kind. Its asymptotics are well-known \cite{HandMath}*{Ch. 17}:
as $ h\to 0^+$ we have
\begin{align}
T(h)\sim 4\ln (4/h)+ O(h), \qquad T'(h)\sim -\frac{4}{h}+O(1) \qquad T''(h)\sim \frac{4}{h^2} +O(1/h),
\end{align}
while as $h\to 1^-$ it holds
\begin{align}
T(h)\sim 2 \pi+O (h-1), \qquad T'(h)\sim -\pi+O(h-1) \qquad T''(h)\sim \frac{5\pi}{4}+O (h-1).
\end{align}
However, all we need are the global bounds contained in Lemma \ref{l:estimates_T}.
\end{remark}

\subsection{Reparametrization of the Hamiltionian and regularity of the change of variables}
We consider the coordinates
\begin{equation}\label{eq:changephitilde}
	\tilde\Phi : \mathbb{S}^1 \times \left(0,\frac\pi2\right) \to (0,\pi)^2\setminus \left\{\left( \frac\pi2,\frac\pi2\right)\right\} \, ,
	\quad
	\tilde \Phi(\theta, I) := X\left(\theta \tilde T(I), \left(\frac\pi2, I\right)\right)
\end{equation}
where $\tilde T(I)$ is the period of the closed orbit in $\{H_\mathsf{c} =  \sin I\}$ passing through the point $ \left(\pi/2, I\right)$.
These coordinates are related to the action-angle coordinates introduced in Section \ref{ss:action-angle} by
\begin{equation}\label{e:new_old}
	\tilde \Phi(\theta, I) = \Phi(\theta, \sin I), \qquad \tilde T(I) = T(\sin I).
\end{equation}

\begin{lemma}\label{l:est_action_angle_new}
	The map $\tilde \Phi : \mathbb{S}^1 \times \left(0,\frac\pi2\right) \to (0,\pi)^2\setminus \left\{\left(\pi/2,\pi/2\right)\right\}$ and its inverse are $C^1$. Moreover the following estimates hold:
	\[
	|\partial_\theta \tilde \Phi(\theta, I)| \lesssim |\ln I| \left(\frac\pi2-I\right), \qquad |\partial_I \tilde \Phi(\theta, I)| \lesssim I^{-1}.
	\]
\end{lemma}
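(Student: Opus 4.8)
The plan is to transfer the regularity and the estimates from the abstract action-angle map $\Phi$ of Lemma \ref{l:Phi} (applied to $H_\sfc$ on the invariant set $(0,\pi)^2\setminus\{(\pi/2,\pi/2)\}$, or rather on the exhaustion of it by good invariant subdomains $H_\sfc^{-1}((h_0,h_1))$) to the reparametrized map $\tilde\Phi$ using the change of variables $I\mapsto \sin I$. Since $I\mapsto \sin I$ is a $C^\infty$ diffeomorphism from $(0,\pi/2)$ onto $(0,1)$ with nonvanishing derivative, the first claim — that $\tilde\Phi$ and $\tilde\Phi^{-1}$ are $C^1$ on $\mathbb{S}^1\times(0,\pi/2)$ — is immediate from Lemma \ref{l:Phi} together with \eqref{e:new_old}. (One should remark that $H_\sfc\in C^\infty$, so the $C^1$ conclusion of Lemma \ref{l:Phi} applies on every $\Omega_0$ exhausting the cell, and the cocycle/periodicity structure glues these together.)

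For the quantitative bounds I would start from the formulas for the derivatives of $\Phi$ in the proof of Lemma \ref{l:Phi}. From \eqref{eq:thetader} and \eqref{e:new_old},
\[
\partial_\theta\tilde\Phi(\theta,I)=\partial_\theta\Phi(\theta,\sin I)=T(\sin I)\,\nabla^\perp H_\sfc(\tilde\Phi(\theta,I)),
\]
so $|\partial_\theta\tilde\Phi(\theta,I)|=\tilde T(I)\,|b_\sfc(\tilde\Phi(\theta,I))|$. Now $\tilde\Phi(\theta,I)$ lies on the level set $\{H_\sfc=\sin I\}$, and by the preliminary lemma of this section $|b_\sfc|^2\le 2(1-\sin^2 I)=2\cos^2 I\sim(\pi/2-I)^2$ near $I=\pi/2$ and $|b_\sfc|\lesssim 1$ elsewhere; together with the global bound $T(h)\lesssim 1+\ln(1/h)$ from \eqref{est1 on T} and $\ln(1/\sin I)\lesssim|\ln I|$ on $(0,\pi/2)$, this gives $|\partial_\theta\tilde\Phi(\theta,I)|\lesssim|\ln I|\,(\pi/2-I)$, possibly after enlarging the implicit constant to absorb the bounded region away from the two endpoints. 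For the $I$-derivative, the chain rule gives $\partial_I\tilde\Phi(\theta,I)=\cos I\,\partial_h\Phi(\theta,\sin I)$, and from the expression for $\partial_h\Phi$ in the proof of Lemma \ref{l:Phi} one has $|\partial_h\Phi(\theta,h)|\lesssim |T'(h)|\,|b_\sfc|+ \e^{T(h)\|H_\sfc\|_{C^2}}/|b_\sfc(x(h))|$. The exponential term is harmless since $T(h)\lesssim 1+\ln(1/h)$ forces $\e^{T(h)\|H_\sfc\|_{C^2}}\lesssim h^{-\|H_\sfc\|_{C^2}}$, which is fine once one chooses the base point $x(h)$ on the segment $\{(\pi/2,I)\}$ where $|b_\sfc(\pi/2,I)|=|\cos I|\sim \min(I,\pi/2-I)$ is comparably large; and the term $|T'(h)|\,|b_\sfc|\lesssim h^{-1}\cdot 1$ by \eqref{est1 on T}. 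Converting $h=\sin I$ back, $\cos I\cdot h^{-1}=\cos I/\sin I\lesssim I^{-1}$, and the exponential contribution is likewise $\lesssim I^{-1}$ after the same substitution, yielding $|\partial_I\tilde\Phi(\theta,I)|\lesssim I^{-1}$.

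The main obstacle is not any single estimate but making the bounds uniform up to \emph{both} singular ends $I\to 0$ (the hyperbolic vertices of the cell, where $T$ blows up logarithmically) and $I\to\pi/2$ (the elliptic center, where $|b_\sfc|$ degenerates), since Lemma \ref{l:Phi} as stated only produces a $C^1$ map on a fixed subdomain $\Omega_0=H_\sfc^{-1}((h_0,h_1))$ with $\emph{qualitative}$ regularity and constants depending on $h_0,h_1$. So the real work is to (i) check that $\Phi$ extends across the elliptic point with the claimed blow-up rate $I^{-1}$ — here one uses that near the center $H_\sfc$ is a nondegenerate maximum, $\tilde T(I)\to\tilde T(\pi/2)<\infty$, $T'$ stays bounded, and $|b_\sfc|\sim(\pi/2-I)$, so the $\partial_h\Phi$ formula degenerates only through $1/|b_\sfc|\sim(\pi/2-I)^{-1}$, matching $\cos I/\sin I$; and (ii) track the dependence of the flow-map Lipschitz constant $\e^{T(h)\|H_\sfc\|_{C^2}}$ as $h\to 0$ and verify it is beaten by the $I^{-1}$ we are allowed (here the polynomial-in-$h^{-1}$ bound from the logarithmic growth of $T$ is in fact \emph{worse} than $I^{-1}$ unless $\|H_\sfc\|_{C^2}$ is small, so one must instead choose the base point $x(h)$ cleverly on the vertical segment and use the better local bound $|b_\sfc(\pi/2,I)|\sim I$ there, or alternatively use that along a single orbit the Gronwall factor over one period $\tilde T(I)$ multiplies a derivative that starts at size $1/|b_\sfc(x(I))|\sim 1/I$, and the number of periods is irrelevant because we only differentiate in $I$, not iterate). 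Once these two uniformity points are settled, the two displayed inequalities follow by combining \eqref{est1 on T}, \eqref{est2 on T}, the $|b_\sfc|$ bounds, and $\sin I\sim\min(I,\pi/2-I)$.
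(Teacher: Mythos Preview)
Your argument for the $C^1$ regularity and for the $\partial_\theta$ bound is fine and matches the paper's proof exactly. The genuine gap is in the $\partial_I$ estimate as $I\to 0$.

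You propose to bound $|\partial_h\Phi(\theta,h)|$ by the two terms coming from the formula in the proof of Lemma~\ref{l:Phi}, and in particular to control the second term by the crude Gronwall estimate $|D_xX(\theta T(h),x(h))|\le \e^{T(h)\|Db_\sfc\|_\infty}$. But for the cellular flow $T(h)\sim 4\ln(1/h)$ as $h\to 0$ and $\|Db_\sfc\|_\infty\ge 1$, so this exponential is at least $h^{-4}$, which after multiplying by $\cos I$ still gives $I^{-4}$ rather than $I^{-1}$. You correctly flag this yourself, but neither of your proposed fixes works: the choice of base point $x(h)$ has no effect on the Gronwall factor $\e^{T(h)\|Db_\sfc\|_\infty}$, and the derivative $D_xX(\theta T(h),\cdot)$ must be controlled for \emph{all} $\theta\in\mathbb{S}^1$, i.e.\ for times up to the full period $T(h)$, so there is no ``single period'' saving to be had. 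The Gronwall route is simply too crude near the separatrix.

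The paper avoids this entirely by \emph{not} passing through $D_xX$. It writes $\tilde\Phi(\theta,I)$ in polar-type coordinates $(\alpha,g(\alpha,I))$ centered at the elliptic point $(\pi/2,\pi/2)$, so that $\partial_I\tilde\Phi$ is expressed via $\partial_I g$, $\partial_\alpha g$ and $\partial_I\alpha$. The nontrivial piece is $\partial_I\alpha$, which is computed by differentiating the identity $\int_0^{\alpha(\theta,I)}\omega(\alpha,I)^{-1}\dd\alpha=\theta\tilde T(I)$ for the angular velocity $\omega$, yielding $\partial_I\alpha=\omega\big(\theta\tilde T'(I)+\int_0^{\alpha}\partial_I\omega/\omega^2\,\dd\alpha\big)$. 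The two terms in the bracket are then each shown to be $\lesssim I^{-1}$: the first from Lemma~\ref{l:estimates_T}, the second by using the symmetry of the cell to reduce to $\alpha\in(0,\pi/4)$ and exploiting that along that arc $P_1\mapsto\alpha$ is bi-Lipschitz, so the integral becomes $\int_{P_1(\pi/4,I)}^{\pi/2}x_1^{-3}\dd x_1\lesssim I^{-1}$ (since $P_1(\pi/4,I)\sim\sqrt{I}$). This geometric bookkeeping is what replaces the lossy exponential; it captures the fact that the stretching of the flow near the hyperbolic corners, while large, is concentrated on a short arc and integrates to only $I^{-1}$.
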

\begin{proof}
	The $C^1$-regularity of $\tilde \Phi$ and its inverse follows from \eqref{e:new_old} and the same properties proved for $\Phi$ in Lemma \ref{l:Phi}.
	The first inequality follows from  Lemma \ref{l:estimates_T}:
	\[
	|\partial_\theta \tilde \Phi(\theta, I)| \le |\tilde T(I)| |b_\sfc|(\tilde \Phi(\theta,I)) \lesssim  |\ln I| \left(\frac\pi2-I\right).
	\]
	Let us prove the second inequality: for every $\alpha \in \mathbb{S}^1$ and $I\in \left(0,\frac\pi2\right)$, let $P(\alpha,I) \in \tilde \Phi(\mathbb{S}^1, I)$ and $g(\alpha,I)>0$ be such that
	\begin{equation}\label{e:def_galpha}
		P(\alpha,I) = \left( \frac\pi2,\frac\pi2\right) + g(\alpha,I) \e^{i\left( 3\pi/2- \alpha\right)}.
	\end{equation}
Take now $\alpha :  \mathbb{S}^1 \times \left(0,\pi/2\right) \to \mathbb{S}^1$   such that
	\[
	\e^{i\left( 3\pi/2- \alpha(\theta,I)\right)} = \frac{\tilde \Phi (\theta, I) - \left(\pi/2,\pi/2\right) }{\left| \tilde \Phi (\theta, I) - \left(\pi/2,\pi/2\right)\right|}.
	\]
		In particular we have 
	\[
	\tilde \Phi(\theta, I) = \left(\frac\pi2,\frac\pi2\right) + g(\alpha(\theta,I),I) \e^{i\left(3\pi/2-\alpha(\theta,I)\right)},
	\]
	therefore
	\begin{equation}\label{e:Phi_I}
		|\partial_I\tilde \Phi(\theta,I)| \le |\partial_\alpha g ( \alpha(\theta,I),I)| |\partial_I\alpha(\theta,I)| + |\partial_Ig(\alpha(\theta,I),I)| + |g ( \alpha(\theta,I),I)| |\partial_I\alpha(\theta,I)|.
	\end{equation}
	We observe that $g(\alpha,I) \sim \left(\pi/2-I\right)$. Moreover it holds 
	\begin{align}
	 &|\partial_Ig(\alpha,I)| \lesssim 1,  \qquad \text{as}\quad I\to \frac\pi2^-,\\
	 &|\partial_Ig(\alpha,I)| \lesssim \frac{1}{|b_\sfc\circ P|}\lesssim \frac1{\sqrt I}, \qquad \text{as}\quad I\to 0^+.
	\end{align}
	Differentiating \eqref{e:def_galpha} and observing that $g(\alpha,I), |\partial_\alpha P| \sim \left(\pi/2-I\right)$ we obtain
	$|\partial_\alpha g| \lesssim \left(\frac\pi2-I\right).$
	Therefore it follows by \eqref{e:Phi_I} that
	\begin{equation}\label{e:Phi_I2}
		|\partial_I\tilde \Phi(\theta,I)| \lesssim \left(\frac\pi2-I\right) |\partial_I\alpha(\theta,I)| +  \frac1{\sqrt I}.
	\end{equation}
	In remains to estimate $\partial_I \alpha$: we consider the angular velocity with respect to the center $\left( \pi/2,\pi/2\right)$:
	\[
	\omega : \mathbb{S}^1 \times \left( 0,\frac\pi2\right) \to \RR, \qquad \omega(\alpha,I) = \frac{b_\sfc(P(\alpha,I)) \, \e^{i\left(2\pi - \alpha\right)}}{g(\alpha,I)}.
	\]
	By construction it holds
	\[
	\int_0^{\alpha(\theta,I)} \frac{1}{\omega(\alpha,I)}\dd\alpha = \theta \tilde T(I).
	\]
	Differentiating the expression above with respect to $I$ we get
	\begin{align}\label{eq:dialpha}
	\partial_I\alpha (\theta,I) = \omega(\alpha(\theta,I),I) \left[\theta \tilde T'(I) + \int_0^{\alpha(\theta, I)} \frac{\partial_I\omega(\alpha,I)}{\omega(\alpha,I)^2} \dd\alpha \right]
	\end{align}
	Since $g(\alpha,I) \sim \left(\frac\pi2 - I\right)$ and $b_\sfc(P(\alpha,I)) \,  \e^{i\left(2\pi - \alpha\right)} \ge \sqrt 2/2$, then $\omega(\alpha,I) \sim \frac{|b_\sfc\circ P|}{g(\alpha,I) }\lesssim 1$.
	By Lemma \ref{l:estimates_T} we have $\tilde T'(I) \lesssim I^{-1}\left(\frac\pi2-I\right)$.
	In order to estimate the integral, we distinguish two regimes: $I\to 0^+$ and $I \to  \frac\pi2^-$, and rely on the estimate
	\[
	|\partial_I\omega| \le \frac{|\partial_I (b_\sfc \circ P)|}{g} + \frac{|b_\sfc\circ P| |\partial_Ig|}{g^2}.
	\]
	For $I \to \frac\pi2^-$ we have $g(\alpha,I) \sim \left(\frac\pi2 -I\right)$ and  $ \left(\frac\pi2 -I\right)^{-1} |b_\sfc \circ P| + |\partial_I (b_\sfc\circ P)| + |\partial_Ig| \lesssim 1$, therefore
	\[
	|\partial_I \omega| \lesssim \left(\frac\pi2 -I\right)^{-1}  \qquad \mbox{as } I \to \frac\pi2^-.
	\]
	In particular we get 
	\[
	|\partial_I\alpha (\theta,I)| \lesssim \left(\frac\pi2 -I\right)^{-1}  \qquad \mbox{as } I \to \frac\pi2^-.
	\]
	We now estimate the integral term in \eqref{eq:dialpha} as $I\to 0^+$: by the symmetries of the cellular flow we have that for every $\theta \in \mathbb{S}^1$ it holds
	\[
	\left| \int_0^{\alpha(\theta, I)} \frac{\partial_I\omega(\alpha,I)}{\omega(\alpha,I)^2} \dd\alpha \right| \le  \int_0^{2\pi}\left| \frac{\partial_I\omega(\alpha,I)}{\omega(\alpha,I)^2} \right|\dd\alpha = 8  \int_0^{\frac\pi4} \left|\frac{\partial_I\omega(\alpha,I)}{\omega(\alpha,I)^2}\right| \dd\alpha 
	\]
	If $\alpha, I \in \left(0,\pi/4\right)$, then, denoting by $P_1$ the first component of the vector $P$, we have
	\[
	|b_\sfc\circ P|\sim P_1(\alpha,I), \qquad |\partial_I (b_\sfc\circ P)| \sim \frac{|(D_x b_\sfc)\circ P|}{|b_\sfc\circ P|} \lesssim \frac1{|b_\sfc\circ P|}, \qquad |\partial_Ig| \lesssim \frac1{|b_\sfc\circ P|}, \qquad g \sim 1,
	\]
	therefore $| \partial_I\omega (\alpha,I)| \lesssim \frac1{P_1(\alpha,I)}$ and $|\omega(\alpha,I)| \sim P_1(\alpha,I)$.
	We observe that for every $I \in \left(0,\pi/4\right)$ the map $\alpha \mapsto P_1(\alpha,I)$ is bi-Lipschitz from $\left(0,\pi/4\right)$ to its image with constants independent of $I$. 
	
	Eventually we can estimate
	\[
	\left|\int_0^{\alpha(\theta, I)} \frac{\partial_I\omega(\alpha,I)}{\omega(\alpha,I)^2} \dd\alpha\right| \lesssim \left| \int_{P_1\left( \frac\pi4,I\right)}^{\frac\pi2}  \frac1{x_1^3}\dd x_1\right| \lesssim I^{-1} \qquad \mbox{as }I \to 0^+,
	\]
	where in the last inequality we used that $P_1\left( \frac\pi4,I\right) \lesssim \sqrt I$, since $\sin^2\left( P_1\left( \frac\pi4,I\right) \right) =\sin I$.
	Combining the estimates in the two regimes we get
	\[
	|\partial_I\alpha (\theta,I)| \lesssim  \left(\frac\pi2 -I\right)^{-1} I^{-1},
	\]
	therefore we conclude by \eqref{e:Phi_I2} that 
	\[
	|\partial_I\tilde \Phi (\theta,I)| \lesssim I^{-1},
	\]
	finishing the proof.
\end{proof}

\subsection{Global mixing estimate}\label{sub:globalmix}

Let $\rho_0 \in C^1(\TT^2)$ be mean zero along the level sets of $H_\mathsf{c}$, and consider $\rho : \RR \times \TT^2 \to \RR$  the solution to \eqref{e:transport} with $\rho(0,\cdot) = \rho_0$. 
To prove global mixing estimates, it is enough to study the flow in each invariant domain $(0,\pi)^2$, $(0,\pi)\times (-\pi,0)$, $(-\pi,0)^2$, $(-\pi,0)\times (0,\pi)$. More precisely, we split
	\begin{equation}\label{zz1}
	\begin{split}
		\| \rho(t) \|_{H^{-1}(\TT^2)} 
		&= 
		\sup_{\|\varphi\|_{H^1(\TT^2)}\le 1} \int_{\TT^2} \rho(t) \varphi \dd x
		\\& \le 
		\sup_{\|\varphi\|_{H^1(\TT^2)}\le 1} \int_{(0,\pi)^2} \rho(t) \varphi \dd x + \sup_{\|\varphi\|_{H^1(\TT^2)}\le 1} \int_{(0,\pi)\times (-\pi,0)} \rho(t) \varphi \dd x 
		\\& \quad +
		\sup_{\|\varphi\|_{H^1(\TT^2)}\le 1} \int_{(-\pi,0)\times (0,\pi)} \rho(t) \varphi \dd x
		+
		\sup_{\|\varphi\|_{H^1(\TT^2)}\le 1} \int_{(-\pi,0)^2} \rho(t) \varphi \dd x
	\end{split}
    \end{equation}
and estimate each term separately using the special coordinates introduced in the previous section. We illustrate in details how to estimate the mixing rate in $(0,\pi)^2$, the analysis of the other domains being completely analogous.

We apply the stationary phase argument for the dynamics in the coordinates $(\theta,I)$ and the change of variables $\tilde \Phi$ given in \eqref{eq:changephitilde}.
The function $f(t,\theta,I) := \rho(t,\tilde \Phi(\theta,I))$ solves the equation
\[
\partial_t f + \frac1{\tilde T(I)}\partial_\theta f =0\, ,
\quad 
\text{for } (t,\theta,I) \in \RR \times \mathbb{S}^1\times \left[0,\frac{\pi}{2} \right] .
\] 
By \eqref{e:Jacobian_angle-action} and \eqref{e:new_old} we have
\[
J_{\tilde \Phi}(\theta,I) = J_{\Phi}(\theta,I)\cos I = \tilde T(I)\cos I.
\]
We set $g(I):= \tilde T(I)\cos I$, and work in the weighted Sobolev space $H^1_g$, defined by the norm
\begin{equation}
	\| f \|_{H^1_g}^2 : = \int_{\mathbb{S}^1\times \left(0,\frac\pi2\right)} \left(|f(\theta,s)|^2 + |\partial_s f(\theta,s)|^2\right) g(s) \dd s \dd\theta .
\end{equation}
Fix $\delta, \delta' \in (0,1)$, by \eqref{e:mix_stat_phase} we have
\[
\sup_{\|\phi\|_{H^1_g}\le 1}\int_{\delta}^{\frac\pi2-\delta'} \int_{\mathbb{S}^1} f(t,\theta,s) \phi(\theta,s) g(s) \dd\theta \dd s \le \|f(0,\cdot)\|_{H^1_g}r(t)
\]
with $r(t)$ given by \eqref{eq:rt}, i.e. 
\begin{align}
		r(t) &= 
		\frac{1}{t} \left\| g^{-\frac12}\right\|^2_{L^2(\delta,\frac{\pi}{2}-\delta')} 
		\left(  \frac{\tilde T^2g}{|\tilde T'|}(\delta) +\frac{\tilde T^2g}{|\tilde T'|}(\frac{\pi}{2}-\delta') +  \left\| \left( \frac{\tilde T^2g}{\tilde T'}\right)' \right\|_{L^1(\delta,\frac{\pi}{2}-\delta')} \right)
		\\&
		\quad+ 
		\frac{1}{t} \left\| g^{-\frac12}\right\|_{L^2(\delta, \frac{\pi}{2}-\delta')}\left\| \frac{\tilde T^2 g^{\frac{1}{2}}}{\tilde T'}\right\|_{L^2(\delta,\frac{\pi}{2}-\delta')}\, .\label{eq:rt1}
\end{align}
By Lemma \ref{l:estimates_T} we have
\[
\tilde T(I) \sim 1 + |\ln I|, \qquad |\tilde T'(I)| \sim I^{-1} \left(\frac\pi2 -I\right), \qquad |\tilde T''(I)| \sim I^{-2} \, ,
\]
hence each term in \eqref{eq:rt1} can be estimated pointwise as
\begin{align}
\left| \left( \frac{\tilde T^2g}{\tilde T'}\right)' \right|
 =\left| \left( \frac{\tilde T^3 \cos I}{\tilde T'}\right)' \right|
&\lesssim \tilde T^2  + \frac{(1 + |\ln I|)^3I^2}{\frac{\pi}{2} - I} +\frac{|\tilde T''| \tilde T^3 I^2}{\frac{\pi}{2} - I}
\lesssim \frac{(1+|\ln I|)^3}{\frac{\pi}{2} - I},
\\
\left| \frac{\tilde T^2 g}{\tilde T'}  \right| 
& \lesssim (1+|\ln I|)^3 I,
\\
\left| \frac{\tilde T^2 g^{1/2}}{\tilde T'}  \right| 
& \lesssim \frac{(1+|\ln I|)^{5/2}I}{\left(\frac{\pi}{2} - I\right)^{1/2}}.
\end{align}
Plugging these estimates in \eqref{eq:rt}, we get 
\begin{equation}\label{e:est_r2}
	r(t) \lesssim \left|\ln \delta' \right|^2 
	\frac1t\, .
\end{equation}
Let $\Omega \subset (0,\pi)^2$ be an open set such that $\spt(\rho(t)) \cap (0,\pi)^2 \subset \Omega$ for every $t\in \mathbb{R}$. Denoting by $\cL^2$ the Lebesgue measure on $(0,\pi)^2$, we estimate
\begin{align*}
	& \sup_{\|\varphi\|_{H^1}\le 1} \int_{(0,\pi)^2} \rho(t) \varphi \dd x \\
	 &=   
	\sup_{\|\varphi\|_{H^1}\le 1}\left( \int_{\tilde \Phi (\mathbb{S}^1 \times (0,\delta))} \rho(t) \varphi \dd x 
	+
	\int_{\tilde \Phi (\mathbb{S}^1 \times (\delta,\frac\pi2-\delta'))} \rho(t) \varphi \dd x 
	+
	\int_{\tilde \Phi (\mathbb{S}^1 \times (\frac\pi2-\delta',\frac\pi2))} \rho(t) \varphi \dd x\right) 
	\\
	 &\lesssim  C(\eps) \|\rho_0\|_{L^\infty} \left[\cL^2\left(\left(\tilde \Phi (\mathbb{S}^1 \times (0,\delta)) \cup\tilde \Phi \left(\mathbb{S}^1 \times \left(\frac\pi2-\delta',\frac\pi2\right)\right) \right)\cap\Omega\right)  \right]^{1-\eps}
	\\
	& \qquad + 
	 \sup_{\|\varphi\|_{H^1}\le 1}\int_{\tilde \Phi (\mathbb{S}^1 \times (\delta,\frac\pi2-\delta'))} \rho(t) \varphi \dd x,
\end{align*}
where we used the a priori estimate $\|\rho(t)\|_{L^\infty} \le \|\rho_0\|_{L^\infty}$
and the two-dimensional Sobolev embedding $\|\varphi\|_{L^p}\le C(p) \|\varphi\|_{H^1} \le C(p)$ for any $p<\infty$.
Performing the computations in Section \ref{ss:change_of_variables}, we deduce from \eqref{e:est_r2} and Lemma \ref{l:est_action_angle_new} that 
\begin{align}
	\sup_{\|\varphi\|_{H^1}\le 1}\int_{\tilde \Phi (\mathbb{S}^1 \times (\delta,\frac\pi2-\delta'))} \rho(t) \varphi \dd x 
	\lesssim \frac{\|\rho_0\|_{H^1} }{t}( 1 + \mathrm{Lip}\tilde \Phi)^2\left|\ln \delta'\right|^2
	\lesssim  \frac{\|\rho_0\|_{H^1}}{t} \delta^{-2}\left|\ln \delta'\right|^2 \, .
\end{align}
 Let $\kappa<1/5$ be a small parameter. We claim that for every $\eps>0$ we have the following:
\begin{enumerate} [label=(\arabic*), ref=(\arabic*)]
	\item \label{item:(1)}  If $\spt(\rho_0)\cap (0,\pi)^2 \subset \mathbb{S}^1\times(\kappa, \pi-\kappa)$, then
	\begin{equation}\label{(1)}
		 \sup_{\|\varphi\|_{H^1}\le 1} \int_{(0,\pi)^2} \rho(t) \varphi \dd x \le C(\eps,\kappa, \|\rho_0\|_{H^1},\|\rho_0\|_{L^\infty}) \frac{1}{t^{1-\eps}}\, .
	\end{equation}

	\item\label{item:(2)} If $\spt(\rho_0)\cap (0,\pi)^2 \subset \mathbb{S}^1\times(0,\pi)^2\setminus B_\kappa((\pi/2,\pi/2))$, then
	\begin{equation}\label{(2)}
		 \sup_{\|\varphi\|_{H^1}\le 1} \int_{(0,\pi)^2} \rho(t) \varphi \dd x \le C(\eps,\kappa, \|\rho_0\|_{H^1},\|\rho_0\|_{L^\infty})   \frac{1}{t^{\frac{1}{3}-\eps}}\, .
	\end{equation} 
\end{enumerate}
To prove \ref{item:(1)}, we set $\Omega:=  \mathbb{S}^1\times(\kappa', \pi-\kappa')$ for some $0<\kappa'<\kappa$ in such a way that
 it contains $\spt(\rho(t))\cap (0,\pi)^2$ for any $t\in \mathbb{R}$. Hence, there exists $\delta(\kappa)<1/5$, depending only on $\kappa$, such that $\tilde \Phi \left(\mathbb{S}^1 \times (0,\delta(\kappa))\right)\cap \Omega = \emptyset$. The analysis above with $\delta = \delta(\kappa)$ gives
\begin{equation}
	 \sup_{\|\varphi\|_{H^1}\le 1} \int_{(0,\pi)^2} \rho(t) \varphi \dd x
	 \lesssim C(\eps)\|\rho_0\|_{L^\infty} (\delta')^{2-2\eps} + C(\kappa) \|\rho_0\|_{H^1} |\ln \delta'|^2 \frac{1}{t} \, ,
\end{equation}
where we used that
\begin{equation}
	\cL^2\left(\tilde \Phi \left(\mathbb{S}^1 \times \left(\frac\pi2-\delta',\frac\pi2\right)\right)\right) \lesssim (\delta')^2 \, .
\end{equation}
The sought conclusion comes by optimizing $\delta'$.

To prove \ref{item:(2)} we argue analogously. The right domain to consider is 
$$
\Omega:=(0,\pi)^2 \subset (0,\pi)^2\setminus B_{2\kappa}((\pi/2,\pi/2)),
$$
and we use the volume estimate
 \begin{equation}
 	\cL^2\left(\tilde \Phi (\mathbb{S}^1 \times (0,\delta))\right) \lesssim \delta ( 1 + |\ln \delta|) \, .
 \end{equation}

 \begin{proof}[Proof of Theorem \ref{thm:cell}]
 
 Let $\rho_0 \in C^1(\TT^2)$ be mean zero along the level sets of $H_\mathsf{c}$, and consider $\rho : \RR \times \TT^2 \to \RR$  the solution to \eqref{e:transport} with $\rho(0,\cdot) = \rho_0$. 
 The lower bound on the mixing rate in \eqref{eq:mixingcell} and the upper bound on the enhanced dissipation rate in \eqref{eq:enhancedcell} are a consequence of Theorem \ref{thm:elliptic}.
 We then prove the upper bound on the mixing rate in \eqref{eq:mixingcell} .
 
Let $\chi$ be a smooth function, constant along the levels of $H_\mathsf{c}$ such that $\chi= 1$ in $B_{\frac{1}{100}}((\pi/2,\pi/2))$ and $\chi = 0 $ in the complement of $B_{\frac{1}{10}}((\pi/2,\pi/2))$. It turns out that $\chi \rho(t)$ and $(1-\chi)\rho(t)$ both solve \eqref{e:transport}. Hence, we can apply \eqref{(1)} and \eqref{(2)} to get
 \begin{align}
 	 \sup_{\|\varphi\|_{H^1}\le 1} \int_{(0,\pi)^2} \rho(t) \varphi \dd x
 	& \le  \sup_{\|\varphi\|_{H^1}\le 1} \int_{(0,\pi)^2} \chi\rho(t) \varphi \dd x +  \sup_{\|\varphi\|_{H^1}\le 1} \int_{(0,\pi)^2} (1-\chi)\rho(t) \varphi \dd x
 	\\& \le C(\eps,\kappa, \| \chi \rho_0\|_{H^1}, \| \chi \rho_0\|_{L^\infty}) \frac{1}{t^{1-\eps}} 
 	\\& \quad + C(\eps,\kappa, \| (1-\chi) \rho_0\|_{H^1}, \|(1- \chi) \rho_0\|_{L^\infty})  \frac{1}{t^{\frac{1}{3}-\eps}}
 	\\&
 	\le C(\eps, \kappa, \|\rho_0\|_{H^1},\|\rho_0\|_{L^\infty}) \frac{1}{t^{\frac{1}{3}-\eps}}\, .
 \end{align}
 To estimate $\| \rho(t) \|_{H^{-1}(\TT^2)}$ we use the decomposition \eqref{zz1}, and sum all the contributions.
 
 Now,  the lower on the enhanced dissipation rate in \eqref{eq:enhancedcell} is a consequence of \cite{CZDE20}*{Theorem 2.1} and the upper bound on the mixing rate. 
 This concludes the proof.
 \end{proof}

\appendix

\section{Analytic mixing and stationary phase in action-angle variables}

Let $\Omega = \mathbb{S}^1\times (0,\pi/2)$ and $g\in C^1(0,\pi/2)$. We define the weighted Sobolev space $H^1_g$ through the norm
\begin{equation}
	\| f \|_{H^1_g}^2 = \int_\Omega \left(|f(\theta,s)|^2 + |\partial_s f(\theta,s)|^2\right) g(s) \dd s \dd\theta .
\end{equation}
Via a stationary phase argument, the next theorem gives an explicit bound on the solution of a transport equation that is helpful in estimating
the decay of correlation.
\begin{theorem}
	Fix $\delta, \delta'\in (0,1/4)$.
	Let $g, T\in C^1(0,\pi/2)$ be positive functions, and let $f : \RR \times \Omega \to \RR$ solve
	\begin{equation}\label{eq:transportactio}
	\partial_t f + \frac1{T(s)} \partial_\theta f =0\, ,
	\quad 
	\text{for } (t,\theta,I) \in \RR \times \Omega .
	\end{equation}
	Assume further that $\int_{\mathbb{S}^1} f(0,\theta,s)  \dd\theta =0$ for any $s\in (0,1)$. 
	Then, for any  $\varphi \in C^1(\Omega)$ it holds
	\begin{equation}\label{eq:decaycorraction}
		\int_{\mathbb{S}^1\times (\delta, 1-\delta')} f(t,\theta,s)\varphi(\theta,s)g(s)\, \dd s\, \dd\theta
		\le \| f(0,\cdot) \|_{H^1_g} \|\varphi \|_{H^1_g}\,  r(t)
	\end{equation}
    where
    \begin{align}\label{eq:rt}
    	r(t) &= 
    	\frac{1}{t} \left\| g^{-\frac12}\right\|^2_{L^2(\delta,\pi/2-\delta')} 
    	\left(  \frac{T^2g}{|T'|}(\delta) +\frac{T^2g}{|T'|}(\pi/2-\delta') +  \left\| \left( \frac{T^2g}{T'}\right)' \right\|_{L^1(\delta,\pi/2-\delta')} +\left\| \frac{T^2 g^{\frac{1}{2}}}{T'}\right\|_{L^2(\delta,\pi/2-\delta')}\right) \notag
    	\\&\quad+ 
    	\frac{1}{t} \left\| g^{-\frac12}\right\|_{L^2(\delta, \pi/2-\delta')}\left\| \frac{T^2 g^{\frac{1}{2}}}{T'}\right\|_{L^2(\delta,\pi/2-\delta')}\, .
\end{align}
\end{theorem}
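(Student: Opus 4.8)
The plan is to run a non-stationary phase argument in the angular variable $\theta$, the only non-standard ingredient being a weighted one-dimensional Sobolev inequality, adapted to the weight $g$, which turns the $H^1_g$-norm into pointwise control of the angular Fourier coefficients. We may assume $t>0$, that $\|f(0,\cdot)\|_{H^1_g}$ and $\|\varphi\|_{H^1_g}$ are finite, and that $T'$ does not vanish on $(\delta,\pi/2-\delta')$, since otherwise the right-hand side of \eqref{eq:decaycorraction} is infinite; here and below the relevant $s$-interval is $(\delta,\pi/2-\delta')$, as in \eqref{eq:rt}. Since the characteristics of \eqref{eq:transportactio} are $\theta\mapsto\theta+t/T(s)$, one has $f(t,\theta,s)=f(0,\theta-t/T(s),s)$. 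Writing the Fourier expansions $f(0,\theta,s)=\sum_k F_k(s)\,e^{2\pi i k\theta}$ and $\varphi(\theta,s)=\sum_k \Phi_k(s)\,e^{2\pi i k\theta}$ (with $C^1$ coefficients), the hypothesis forces $F_0\equiv 0$, Parseval on $\mathbb{S}^1$ followed by integration in $s$ gives $\sum_k\int_\delta^{\pi/2-\delta'}\big(|F_k|^2+|F_k'|^2\big)g\le\|f(0,\cdot)\|_{H^1_g}^2$ and likewise for $\Phi_k$, and orthogonality yields
\[
\int_{\mathbb{S}^1\times(\delta,\pi/2-\delta')} f(t,\theta,s)\,\varphi(\theta,s)\,g(s)\,d\theta\,ds
=\sum_{k\neq 0}\int_\delta^{\pi/2-\delta'} F_k(s)\,\Phi_{-k}(s)\,g(s)\,e^{i\phi_k(s)}\,ds,
\qquad \phi_k(s):=-\frac{2\pi k t}{T(s)}.
\]

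Next I would integrate by parts once in $s$ in each summand, using $e^{i\phi_k}=(i\phi_k')^{-1}\,\tfrac{d}{ds}e^{i\phi_k}$ with $\phi_k'(s)=2\pi k t\,T'(s)/T(s)^2$, so that $(i\phi_k')^{-1}=T^2/(2\pi i k t\,T')$; note that $F_0\equiv 0$ is exactly what makes the sum range over $k\neq0$, hence $\phi_k'\neq0$ and the division legitimate. This produces boundary terms bounded by $(2\pi|k|t)^{-1}\,(gT^2/|T'|)(s_*)\,|F_k(s_*)|\,|\Phi_{-k}(s_*)|$ at $s_*\in\{\delta,\pi/2-\delta'\}$, together with a bulk integral whose integrand, after differentiating $F_k\Phi_{-k}\,gT^2/T'$, is the sum of three pieces carrying $F_k'\Phi_{-k}\,gT^2/T'$, $F_k\Phi_{-k}'\,gT^2/T'$ and $F_k\Phi_{-k}\,(gT^2/T')'$, each multiplied by $(2\pi|k|t)^{-1}$.

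The core point is the weighted trace/embedding estimate: for $u\in H^1(\delta,\pi/2-\delta')$ and any $s$ in this interval, averaging $u(s)=u(s_0)+\int_{s_0}^s u'$ over $s_0$ and using $\delta,\delta'<1/4$ (so the interval has length $>\pi/2-1/2$) gives
\[
|u(s)|\ \lesssim\ \int_\delta^{\pi/2-\delta'}\big(|u|+|u'|\big)\,d\sigma\ \le\ \big\|g^{-1/2}\big\|_{L^2(\delta,\pi/2-\delta')}\Big(\|u g^{1/2}\|_{L^2}+\|u'g^{1/2}\|_{L^2}\Big).
\]
Applying this with $u=F_k$ and $u=\Phi_{-k}$ controls $|F_k(s_*)|$, $|\Phi_{-k}(s_*)|$ and, in the bulk, $\|F_k\Phi_{-k}\|_{L^\infty}$, each by $\|g^{-1/2}\|_{L^2}^2$ times $\big(\|F_kg^{1/2}\|_{L^2}+\|F_k'g^{1/2}\|_{L^2}\big)\big(\|\Phi_{-k}g^{1/2}\|_{L^2}+\|\Phi_{-k}'g^{1/2}\|_{L^2}\big)$. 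Pairing these with $(gT^2/|T'|)(\delta)$, $(gT^2/|T'|)(\pi/2-\delta')$ and $\|(gT^2/T')'\|_{L^1}$ respectively reproduces the $\|g^{-1/2}\|_{L^2}^2$–weighted contributions to \eqref{eq:rt}. For the two remaining pieces I would instead apply Cauchy--Schwarz in $s$, keeping the differentiated factor against $g^{1/2}$ and estimating the undifferentiated factor in $L^\infty$ via the display above, which produces the term $\|g^{-1/2}\|_{L^2}\,\|T^2g^{1/2}/T'\|_{L^2}$ in \eqref{eq:rt}.

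Finally I would sum over $k\neq0$: each term carries $(2\pi|k|t)^{-1}\le(2\pi t)^{-1}$ times a product $a_kb_k$ with $a_k\le\|F_kg^{1/2}\|_{L^2}+\|F_k'g^{1/2}\|_{L^2}$ and $b_k\le\|\Phi_{-k}g^{1/2}\|_{L^2}+\|\Phi_{-k}'g^{1/2}\|_{L^2}$, so by Cauchy--Schwarz in $k$ and the Parseval identities above $\sum_{k\neq0}a_kb_k\lesssim\|f(0,\cdot)\|_{H^1_g}\|\varphi\|_{H^1_g}$; collecting the contributions gives \eqref{eq:decaycorraction} with $r(t)$ as in \eqref{eq:rt} (up to a universal constant that may be absorbed into the definition of $r$, or tracked explicitly). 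Thus the analytic content is a single integration by parts together with the weighted one-dimensional embedding; the genuinely delicate part — and the place where I expect to have to be most careful — is the bookkeeping that matches each remainder term to the precise weighted norm appearing in $r(t)$, especially keeping straight which factors are estimated pointwise and which are kept in $L^2$ against $g^{1/2}$.
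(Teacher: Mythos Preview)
Your proposal is correct and follows essentially the same approach as the paper: Fourier expansion in $\theta$, a single integration by parts in $s$ against the oscillatory factor $e^{-ikt/T(s)}$, the weighted one-dimensional Sobolev embedding $\|u\|_{L^\infty}\lesssim\|g^{-1/2}\|_{L^2}\|u\|_{H^1_g}$ to control pointwise values of the Fourier coefficients, and Cauchy--Schwarz in $k$ to close. The bookkeeping you outline for matching the four terms (two boundary, one from $(T^2g/T')'$, two from the derivatives landing on $F_k$ or $\Phi_{-k}$) to the pieces of $r(t)$ is exactly what the paper does.
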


\begin{proof}
Expanding in Fourier series in the variable $\theta$ we obtain 
\begin{align}
	f(t, \theta, s)  = \sum_{k\in 4 \mathbb{Z}} f_k(t,s) \e^{i k \theta },\qquad
	f_k(t,s)  = f_k(0,s) \e^{-\frac{ikt}{T(s)}} \, .	
\end{align}
Notice that $f_0(0,s)=0$ for every $s\in (0,1)$.
We compute
\[
\int_\delta^{\pi/2-\delta'} \int_{\mathbb{S}^1}f(t,\theta,s) \varphi(\theta,s) g(s) \dd\theta \dd s =  \sum_k \int_\delta^{\pi/2-\delta'} f_k(t,s) \overline{ \varphi_k(s)} g(s) \dd s
\]
Integrating by parts, for every $k \in 4\mathbb Z$ we have 
\[
\begin{split}
	\int_\delta^{\pi/2-\delta'} f_k(t,s) \overline{ \varphi_k(s)} g(s) \dd s
	= &    \int_\delta^{\pi/2-\delta'} f_k(0,s) \frac1{ikt} \frac{T^2(s)}{T'(s)} \frac{d}{\dd s}\left( \e^{-\frac{ikt}{T(s)}}\right) \overline{ \varphi_k(s)} g(s) \dd s \\
	= &  \left. f_k(0,s) \frac1{ikt} \frac{T^2(s)}{T'(s)} \e^{-\frac{ikt}{T(s)}}\overline{ \varphi_k(s)} g(s) \right|_{s=\delta}^{\pi/2-\delta'} \\
	& -  \frac1{ikt}  \int_\delta^{\pi/2-\delta'}  \partial_s f_k(0,s) \frac{T^2(s)}{T'(s)} \e^{-\frac{ikt}{T(s)}}\overline{ \varphi_k(s)} g(s)\dd s \\
	& -  \frac1{ikt}  \int_\delta^{\pi/2-\delta'}  f_k(0,s) \left(\frac{T^2g}{T'}\right)'(s) \e^{-\frac{ikt}{T(s)}}\overline{ \varphi_k(s)}\dd s \\
	& -   \frac1{ikt} \int_\delta^{\pi/2-\delta'}   f_k(0,s)  \frac{T^2(s)}{T'(s)} \e^{-\frac{ikt}{T(s)}}\overline{ \varphi'_k(s)} g(s)\dd s\, .
\end{split}
\]
We can estimate $\|f_k(0,\cdot)\|_{L^\infty} \le \|f_k(0,\cdot)\|_{L^1}+ \|\partial_sf_k(0,\cdot)\|_{L^1} \le \|f_k(0,\cdot)\|_{H^1_g}\left\| g^{-\frac12}\right\|_{L^2}$ and
$\| \varphi_k\|_{L^\infty}\le\|\varphi_k\|_{H^1_g}\left\| g^{-\frac12}\right\|_{L^2}$
to get
\[
	\left| \int_\delta^{1\pi/2-\delta'} f_k(t,s) \overline{ \varphi_k(s)} g(s) \dd s \right| 
	\le   \frac{r(t)}{|k|}  \|f_k(0,\cdot)\|_{H^1_g} \| \varphi_k\|_{H^1_g} ,
\]
with
\begin{equation}
	r(t) = 
	\frac{1}{t} \left\| g^{-\frac12}\right\|^2_{L^2}
	\left(  \frac{T^2g}{|T'|}(\delta) +\frac{T^2g}{|T'|}(1-\delta') +  \left\| \left( \frac{T^2g}{T'}\right)' \right\|_{L^1} \right)
	 + 
	\frac{1}{t} \left\| g^{-\frac12}\right\|_{L^2}\left\| \frac{T^2 g^{\frac{1}{2}}}{T'}\right\|_{L^2}\, .
\end{equation}
Summing over $k$ we get
\begin{equation}\label{e:mix_stat_phase}
	\begin{split}
	\left|\int_{\mathbb{S}^1\times (\delta, \pi/2-\delta')} f(t,\theta,s)\varphi(\theta,s)g(s)\, \dd s\, \dd\theta \right| 
	\le  
	 \|f(0,\cdot)\|_{H^1_g} \| \varphi\|_{H^1_g} r(t)\, .
	\end{split}
\end{equation}	
This concludes the proof.
\end{proof}

%
%

\subsection{Change of variables}\label{ss:change_of_variables}
The introduction of action-angle variables simplify the structure of a general transport equation to an equation of the form \eqref{eq:transportactio}. 
The estimate \eqref{eq:decaycorraction} then provides a correlation estimate which can be understood in a negative Sobolev space with respect
to the action-angle coordinates. Therefore, an estimate on the change of coordinates may be needed to understand mixing in the usual $H^{-1}$ sense.

\begin{lemma}
Let $\rho =  \rho (t,x)$ and $f(t,\theta,s) = \rho (t,\Phi(\theta,s))$, for some change of coordinate $\Phi$. 
Assume that the Jacobian $J_\Phi(\theta,s)=g(s)$ for some smooth positive function $g$. 
If  
\[
\sup_{\|\phi\|_{H^1_g}\le 1}\int f(t,\theta,s) \phi(\theta,s) g(s) \dd s \le \|f(0,\cdot)\|_{H^1_g}r(t)
\] 
for some $r(t)$, then
\[
\|\rho(t)\|_{H^{-1}}\lesssim \|\rho(0)\|_{H^1} \left(1 + \mathrm{Lip}(\Phi)\right)^2 r(t).
\]
\end{lemma}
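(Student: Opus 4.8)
The plan is a pure change-of-variables argument, exploiting that the Jacobian of $\Phi$ equals the weight $g$ appearing in $H^1_g$. First I would use the duality characterization $\|\rho(t)\|_{H^{-1}} = \sup_{\|\varphi\|_{H^1}\le 1}\int \rho(t,x)\varphi(x)\,\dd x$, and for a fixed admissible $\varphi$ introduce the pull-back $\phi := \varphi\circ\Phi$. Since $\Phi$ is a $C^1$ diffeomorphism onto its image and $J_\Phi=g$, the area formula gives $\int \rho(t,x)\varphi(x)\,\dd x = \int f(t,\theta,s)\,\phi(\theta,s)\,g(s)\,\dd s\,\dd\theta$, where we use that the relevant part of $\rho(t,\cdot)$ lives in the cell parametrized by $\Phi$ (the image of $\Phi$ has full measure there, so nothing is lost). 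Normalizing $\phi$ and invoking the hypothesis then yields $\int \rho(t)\varphi\,\dd x \le \|\phi\|_{H^1_g}\,\|f(0,\cdot)\|_{H^1_g}\,r(t)$, so everything reduces to bounding these two weighted norms by $\|\varphi\|_{H^1}$ and $\|\rho(0)\|_{H^1}$ respectively.

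The key computation is: if $\psi$ is any $H^1$ function on $x$-space and $\Psi := \psi\circ\Phi$, then by the chain rule $\partial_s\Psi = (\nabla\psi\circ\Phi)\cdot\partial_s\Phi$, whence $|\partial_s\Psi| \le \mathrm{Lip}(\Phi)\,|\nabla\psi|\circ\Phi$; multiplying the pointwise bound $|\Psi|^2 + |\partial_s\Psi|^2 \le \big(|\psi|^2 + \mathrm{Lip}(\Phi)^2|\nabla\psi|^2\big)\circ\Phi$ by $g = J_\Phi$ and changing variables back gives $\|\Psi\|_{H^1_g} \le (1+\mathrm{Lip}(\Phi))\,\|\psi\|_{H^1}$. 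Applying this with $\psi = \varphi$ gives $\|\phi\|_{H^1_g}\le 1+\mathrm{Lip}(\Phi)$ (recall $\|\varphi\|_{H^1}\le 1$), and with $\psi = \rho(0,\cdot)$, so that $\Psi = f(0,\cdot)$, gives $\|f(0,\cdot)\|_{H^1_g}\le (1+\mathrm{Lip}(\Phi))\,\|\rho(0)\|_{H^1}$. Combining these with the inequality from the first paragraph and taking the supremum over $\varphi$ yields $\|\rho(t)\|_{H^{-1}}\lesssim (1+\mathrm{Lip}(\Phi))^2\,\|\rho(0)\|_{H^1}\,r(t)$, as claimed; the exponent $2$ simply records the two independent compositions with $\Phi$, one on the test function and one on the initial datum.

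There is no serious obstacle here. The two points that need a line of care are: that only the $\partial_s$-derivative enters $H^1_g$, so the (possibly large) tangential stretching $\partial_\theta\Phi$ never contributes and only $\mathrm{Lip}(\Phi)$ — not any bound on $\Phi^{-1}$ — is required; and that the $x$-integral only sees the image of $\Phi$, which is where the support hypothesis on $\rho$ (equivalently, that $\Phi$ parametrizes one full cell up to a null set) is used. Both are routine once the set-up is in place, making this the shortest of the lemmas.
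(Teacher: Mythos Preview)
Your proposal is correct and follows exactly the paper's approach: use the duality definition of $H^{-1}$, change variables via $\Phi$ (whose Jacobian is $g$), bound the $H^1_g$-norm of the pulled-back test function and of $f(0,\cdot)$ each by $(1+\mathrm{Lip}(\Phi))$ times the corresponding $H^1$-norm via the chain rule, and combine. If anything, you spell out the chain-rule estimate more explicitly than the paper, which records the same computation in a single five-line display.
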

\begin{proof}
The proof is a direct computation
\begin{align*}
	\|\rho(t)\|_{H^{-1}} = & \sup_{\|\psi\|_{H^1}\le 1} \int \rho(t,x)\psi(x) \dd x \\
	= &  \sup_{\|\psi\|_{H^1}\le 1} \int f(t,\theta,s) \psi(\Phi(\theta,s)) g(s)  \dd\theta  \dd s \\
	\le & \sup_{\|\phi\|_{H^1_g}\le 1 + \mathrm{Lip}(\Phi) } \int  f(t,\theta,s) \phi(\theta,s) g(s)  \dd\theta ds\\
	\le &  (1 + \mathrm{Lip}(\Phi))  \|f(0,\cdot)\|_{H^1_g}r(t) \\
	\le & (1 + \mathrm{Lip}(\Phi))^2  \|\rho(0,\cdot)\|_{H^1}r(t),
\end{align*}
as needed.
\end{proof}

 \section*{Acknowledgments} 
The research of MCZ was supported by the Royal Society through a University Research Fellowship (URF\textbackslash R1\textbackslash 191492).
EM acknowledges the support received from the European Union's Horizon 2020 research and innovation program under the Marie Sklodowska-Curie grant No. 101025032. EB was supported by the Giorgio and Elena Petronio Fellowship at the Institute for Advanced Study.
\bibliographystyle{abbrv}
\bibliography{biblio_Hamiltonians}
\end{document}